\pgfplotsset{width=13cm,compat=1.14}
\newtheorem{theorem}{Theorem}[section]
\newtheorem{lemma}[theorem]{Lemma}
\newtheorem{proposition}[theorem]{Proposition}
\newtheorem{algorithm}{Algorithm}
\newcommand{\inner}[2]{\langle #1,#2\rangle}
\newcommand{\norm}[1]{\|{#1}\|}
\newcommand{\R}{\mathbb{R}}
\newcommand{\comenta}[1]{}
\newcommand{\HH}{\mathcal{H}}
\newcommand{\mgap}{\vspace{.1in}}
\begin{document}
\title{A relative-error inertial-relaxed inexact projective splitting algorithm}
\author{
    M. Marques Alves
\thanks{
Departamento de Matem\'atica,
Universidade Federal de Santa Catarina,
Florian\'opolis, Brazil, 88040-900 ({\tt maicon.alves@ufsc.br}).
The work of this author was partially supported by CNPq grant no. 308036/2021-2.}
\and
  Marina Geremia
\thanks{
Departamento de Matem\'atica,
Universidade Federal de Santa Catarina,
Florian\'opolis, Brazil, 88040-900.  Departamento de Ensino, Pesquisa e Extensão, Instituto Federal de 
Santa Catarina (IFSC) ({\tt marina.geremia@ifsc.edu.br}).}
\and
Raul T. Marcavillaca
\thanks{
Departamento de Matemáticas, Universidad de Tarapacá, Arica, Chile ({\tt raultm.rt@gmail.com}).
The work of this author was partially supported by CAPES.}
}


\maketitle

\begin{abstract}
For solving structured monotone inclusion problems involving the sum of finitely many maximal monotone operators, 
we propose and study a relative-error inertial-relaxed inexact projective splitting algorithm. The proposed algorithm benefits from a combination of inertial and relaxation effects, which are both controlled by parameters within a certain range. We propose sufficient conditions on these parameters and study the interplay between them in order to guarantee weak convergence of sequences generated by our algorithm. Additionally, the proposed algorithm also benefits from inexact subproblem solution within a relative-error criterion. 
Illustrative numerical experiments on LASSO problems indicate some improvement when compared with 
previous (noninertial and exact) versions of projective splitting. 
\\
\\
  2000 Mathematics Subject Classification: 47H05, 49M27, 47N10.
 \\
 \\
  Key words: operator splitting, projective splitting, inertial algorithms, relative-error, monotone operators.
\end{abstract}

\pagestyle{plain}

\section{Introduction}
\label{sec:int}
Let $\HH_0, \HH_1,\ldots, \HH_n$ be real Hilbert spaces and let $\inner{\cdot}{\cdot}$ and
 $\|\cdot\|=\sqrt{\inner{\cdot}{\cdot}}$ denote the inner product and norm (respectively) in $\HH_i$ ($i=0,\dots, n$).
 Assume that $\HH_0=\HH_n$. 
 Let $\boldsymbol{\HH}:=\HH_0\times \dots\times \HH_{n-1}$ be endowed with the inner product and norm
defined, respectively, as follows (for some $\gamma >0$):
\begin{align}
  \label{eq:def.inner}
\inner{(z,w)}{(z',w')}_{\gamma}=\gamma\inner{z}{z'}+\sum_{i=1}^{n-1}\inner{w_i}{w'_i},\quad
\norm{(z,w)}_\gamma^2=\gamma \norm{z}^2+\sum_{i=1}^{n-1}\norm{w_i}^2,
\end{align}
where $z,z'\in \HH_0$ and $w:=(w_1,\dots, w_{n-1}), w':=(w'_1,\dots, w'_{n-1})\in \HH_1\times \ldots\times \HH_{n-1}$.

Consider the monotone inclusion problem of finding $z\in \HH_0$ such that
\begin{align}
 \label{eq:Pmip}
 0\in \sum_{i=1}^{n}G_i^*T_iG_i(z)
\end{align}
where $n\geq 2$ and the following assumptions hold:
\begin{itemize}
  \item[\mbox{(A1)}] For each $i=1,\dots, n$, the operator $T_i:\HH_i\rightrightarrows \HH_i$ is (set-valued) maximal 
monotone and $G_i:\HH_0\rightarrow \HH_i$  is a bounded linear operator.
  \item[\mbox{(A2)}] The linear operator $G_n$ is equal to the identity map in $\HH_0=\HH_n$, i.e., $G_n:z\mapsto z$ 
for all $z\in \HH_0$.
  \item[\mbox{(A3)}] The solution set of \eqref{eq:Pmip} is nonempty, i.e., there exists at least one $z\in \HH_0$ satisfying
the inclusion in \eqref{eq:Pmip}.
\end{itemize}

Problem \eqref{eq:Pmip} appears in different fields of applied mathematics and optimization, 
including machine learning, inverse problems and image processing~\cite{alot.combet.shah.2014,Johnst.Eckst.2018,Johnst.Eckst.2019}, specially in connection with
the convex optimization problem
\begin{align}
  \label{eq:opt}
 \min_{z\in \HH_0}\,\sum_{i=1}^n\,f_i(G_i z)
\end{align}
where, for $i=1,\dots, n$, each $f_i:\HH_i\to (-\infty,\infty]$ is proper, lower semicontinuous and convex. Indeed, under mild assumptions
on $f_i$ and $G_i$, the minimization problem \eqref{eq:opt} is equivalent to the monotone inclusion problem \eqref{eq:Pmip} with $T_i=\partial f_i$ ($i=1,\dots, n$). 

A very popular strategy to find approximate solutions of \eqref{eq:Pmip} is that of (monotone) operator splitting algorithms, 
which traces back to the development of some well-known numerical schemes like the Douglas-Rachford splitting algorithm, Spingarn's method of partial inverses, among others. 

The family of \emph{projective splitting algorithms} for solving \eqref{eq:Pmip} originated in \cite{eck.sva-gen.sjco09}
for the case when $G_i$ is the identity ($i=1,\dots, n$), and later on it was developed in different directions (see, e.g., 
\cite{
combettes2015BestAppr,
alot.combet.shah.2014,
combettes2020warped,
combettes2022saddle,
combettes2018asynchronous,
combettes2016Solving,
eckstein2019rates,
Johnst.Eckst.2019,
Johnst.Eckst.2018,
jonhst.eckst.siam2019}). 
It has deserved a lot of attention
in modern operator splitting research, mainly due to its flexibility (when compared to other classes of operator splitting algorithms) regarding parameters and the activation of $T_i$ and $G_i$ separately during the iterative process.

The derivation of the class of projective splitting algorithms can be motivated as follows.
First note that using Assumption (A2) above, we obtain that \eqref{eq:Pmip} can equivalently be written as
\begin{align}
 \label{eq:Pmip02}
 0\in \sum_{i=1}^{n-1}G_i^*T_iG_i(z) + T_n(z)
\end{align}
which, in turn, is clearly equivalent to the (feasibility) problem of finding a point in the \emph{extended solution set} of \eqref{eq:Pmip} 
(or \eqref{eq:Pmip02}):
\begin{align}
\label{eq:KTSi}
  \mathcal{S}:=\left\{(z,w_1,\ldots,w_{n-1})\in \boldsymbol{\HH}\;\;|\;\;w_i\in T_i(G_iz),\, i=1,\ldots,n-1,\, 
-\sum_{i=1}^{n-1}G^*_iw_i\in T_n(z)\right\}.
\end{align}
Since $\mathcal{S}$ is nonempty (see Assumption (A3)), closed and convex 
(see, e.g., \cite{alot.combet.shah.2014,eck.sva-gen.sjco09}) in   
$\boldsymbol{\HH}$, it follows that problem \eqref{eq:Pmip} reduces to the task of finding a point in $\mathcal{S}$ (fact that
motivates the abstract framework developed in Section \ref{sec:spm} below).

Note now that, if we pick $y_i^k\in T_i(x_i^k)$ ($i=1,\dots, n$), then, from the monotonicity of $T_i$ and the inclusions
in \eqref{eq:KTSi}, it follows that
\begin{align}
  \label{eq:sep.i}
 \sum_{i=1}^n\,\inner{G_i z-x_i^k}{y_i^k- w_i}\leq 0\qquad \forall (z,w_1,\dots, w_{n-1})\in \mathcal{S},
\end{align}
where 
\begin{align}
 \label{eq:wni}
 w_n:=-\sum_{i=1}^{n-1}G^*_iw_i.
 \end{align}
The inequality \eqref{eq:sep.i} says, in particular, that $\{(x_i^k,y_i^k)\}_{i=1}^n$  defines a function of $(z,w_1,\dots, w_{n-1})$ which is lesser or equal to zero in $\mathcal{S}$. 
Since this function can be proved to be affine (see, e.g., Lemma \ref{lmm:grad.sep} below), it follows 
from \eqref{eq:sep.i} that it defines a semispace in $\boldsymbol{\HH}$ containing the extended solution set $\mathcal{S}$.

Consequently, it follows that the main mechanism behind the idea of projective splitting algorithms is basically as follows: 
at the iteration $(z^k,w_1^k,\dots, w_{n-1}^k)$, pick, for each $i=1,\dots, n$, a pair $(x_i^k,y_i^k)$ in the graph of $T_i$ and
then update the current iterate $p^k:=(z^k,w_1^k,\dots, w_{n-1}^k)$ to 
$p^{k+1}:=(z^{k+1},w_1^{k+1},\dots, w_{n-1}^{k+1})$ by projecting $p^k$ onto the semispace defined by the affine function given in the left hand side of \eqref{eq:sep.i}. 
Computation of $(x_i^k,y_i^k)$ is in general performed by (inexactly) activating the resolvent $(T_i+I)^{-1}$ operator of each $T_i$ to guarantee, in particular, that
the current iterate $(z^k,w_1^k,\dots, w_{n-1}^k)$ belongs to the positive side of the corresponding hyperplane.

%

\mgap

In this paper, we propose and study a relative-error inertial-relaxed inexact projective splitting algorithm for solving \eqref{eq:Pmip} and, in particular, for solving the convex program \eqref{eq:opt}. Inertial algorithms for solving monotone inclusions of the form $0\in T(z)$, where $T$ is maximal monotone, 
%
were first proposed in \cite{alv.att-iner.svva01}, and since then developed by different authors and in different directions of 
research (see, e.g., \cite{alv.eck.ger.mel-preprint19,att.pey-con.mp19,bot.cse.hen-ine.amc15,com.gla-qua.sjo17} and references there in). 
At a current iterate, say $p^k$, the inertial effect in the iterative process is produced by an extrapolation step of the form 
(see also Algorithm \ref{inertial_projective} and Figure \ref{fig:arrows} below):
\[
 \widehat p^k = p^k + \alpha_k(p^k - p^{k-1}).
\]
Since $\alpha_k\geq 0$ controls the magnitude of extrapolation performed in the direction of the vector $p^k-p^{k-1}$, it follows
that the asymptotic behavior and size of $\alpha_k$ have a direct influence in the convergence analysis of inertial-type algorithms. 
A usual sufficient condition \cite{alv.att-iner.svva01} imposed on the sequence $\{\alpha_k\}$, with guarantee of
weak convergence 
of $\{p^k\}$, is that
$\{\alpha_k\}$ is nondecreasing and $\alpha_k<1/3$ for all $k\geq 0$. The upper bound $1/3$ has been recently improved in
combination with relaxation effects~\cite{alv.eck.ger.mel-preprint19,att.cab-con.pre18}.

The main goal of this paper is to develop a projective splitting-type algorithm for solving \eqref{eq:Pmip} with
both inertial and relaxation effects and, additionally, with inexact subproblems solution within relative-error criterion.
 Up to the authors knowledge, this is the first time in the literature that
inertial effects are considered in projective splitting algorithms. Our main algorithm is Algorithm \ref{alg:iPSM} from
Section \ref{sec:ps}, for which the convergence is studied in Theorems \ref{th:conv01} and \ref{th:conv02}, under flexible
assumptions on the inertial and relaxation parameters. Motivated by the discussion above that \eqref{eq:Pmip} is equivalent
to the problem of finding a point in the closed and convex set $\mathcal{S}$ as in \eqref{eq:KTSi}, we first introduce in Section 
\ref{sec:spm}  an inertial-relaxed separator-projector method for solving the (feasibility) problem of finding points in closed
convex subsets of Hilbert spaces.

\mgap

The following well-known property (see, e.g., \cite[Corollary 2.14]{bau.com-book}) will be useful in this paper: for all $x,y$ in a real Hilbert space $\HH$ and $t\in \R$,
 it holds that
\begin{align}
 \label{eq:ineq.s}
 \norm{tx+(1-t)y}^2=t\norm{x}^2+(1-t)\norm{y}^2-t(1-t)\norm{x-y}^2.
 \end{align}
We shall also use the following inequality:
\begin{align}
  \label{eq:ineq.sum2}
 \left\|\sum_{i=1}^n\,x_i\right\|^2\leq n\sum_{i=1}^n\,\norm{x_i}^2.
\end{align}

%
\section{An inertial-relaxed separator-projection method}
 \label{sec:spm}

In this section, we propose and study a general separator-projection framework (Algorithm \ref{inertial_projective}) for 
finding a point in a closed and convex subset of a Hilbert space. 
The main motivation comes from the fact (as previously discussed in Section \ref{sec:int}) that the monotone inclusion 
problem \eqref{eq:Pmip} can be reformulated as the problem of finding a point in the extended solution set $\mathcal{S}$ as in \eqref{eq:KTSi}.  
Algorithm \ref{inertial_projective} will be used in Section \ref{sec:ps} to analyze the convergence of the main algorithm proposed in 
this paper (namely Algorithm \ref{alg:iPSM}) for solving \eqref{eq:Pmip1}. 

Let $\HH$ be a real Hilbert space with inner product $\inner{\cdot}{\cdot}$
and norm $\|\cdot\|=\sqrt{\inner{\cdot}{\cdot}}$. We denote the gradient of an affine
function $\varphi:\HH\to \R$ by the usual notation $\nabla \varphi$ and, in this case, we
also write $\varphi(z)=\inner{\nabla \varphi}{z}+\varphi(0)$ for all $z\in \HH$.

\mgap
\mgap

\noindent
\fbox{
\addtolength{\linewidth}{-2\fboxsep}%
\addtolength{\linewidth}{-2\fboxrule}%
\begin{minipage}{\linewidth}
\begin{algorithm}
\label{inertial_projective}
{\bf An inertial-relaxed linear separator-projection method for finding a point in a nonempty closed convex set $\mathcal{S}\subset \mathcal{H}$}
\end{algorithm}
\begin{itemize}
\item[{\bf(0)}] Let  $p^0=p^{-1}\in \HH$, $\alpha\in [0,1)$ and $0<\underline{\beta}<\overline{\beta}<2$ be given and let $k\leftarrow0$.
\item [{\bf(1)}] Choose $\alpha_{k}\in [0,\alpha]$ and define
  \begin{align}
      \label{eq:ext}
     \widehat p^{\,k}= p^{k}+\alpha_{k}(p^{k}-p^{k-1}).
 \end{align}
\item [{\bf(2)}] Find an affine function $\varphi_k$ such that $\nabla \varphi_k\neq 0$ and $\varphi_k(p)\leq 0$ for all $p\in \mathcal{S}$.
%
Choose $\beta_k\in [\underline{\beta},\overline{\beta}]$ and set
 \begin{align}
  \label{eq:ext.proj}
   p^{k+1}=\widehat p^{\,k} - \dfrac{\beta_k \max\{0,\varphi_k(\widehat p^{\,k})\}}{\norm{\nabla \varphi_k}^2}\nabla \varphi_k.
 \end{align}
\item[{\bf(3)}] Let $k\leftarrow k+1$ and go to step 1.
\end{itemize}
\noindent
\end{minipage}
} 

\mgap
\mgap
\noindent
{\bf Remarks}.
\begin{itemize}
\item[\mbox{(i)}] Letting $\widetilde p^{\,k+1}$ be the (orthogonal) projection of $\widehat p^{\,k}$ onto the semispace
$\{p\in \HH\,|\,\varphi_k(p)\leq 0\}$, i.e.,
\begin{align}
 \label{eq:001}
 \widetilde p^{\,k+1} = \widehat p^{\,k} - \dfrac{\max\{0,\varphi_k(\widehat p^{\,k})\}}
{\norm{\nabla \varphi_k}^2}\nabla \varphi_k
\end{align}
and using \eqref{eq:ext.proj} we conclude that
\begin{align}
  \label{eq:333}
 p^{k+1}
 = \widehat p^{\,k} + \beta_k (\widetilde p^{\,k+1} - \widehat p^{\,k}).
\end{align}
\item[\mbox{(ii)}] Note that \eqref{eq:ext} and \eqref{eq:333} illustrate the different effects promoted in 
Algorithm \ref{inertial_projective} by inertia and relaxation, which are controlled, respectively, by the parameters $\alpha_k$ and
$\beta_k$. See Figure \ref{fig:arrows} below.
\begin{figure}[!htb]
\centering
\vspace{0.5cm}
\begin{tikzpicture}[scale=3]
\draw[thick,->,blue] (0, 0) -- (1.5, 0.3);
\draw[thick,->,green] (1.5, 0.3) -- (2, 0.75);
\draw[dashed] (1.1,1.4) -- (2.40,-0.1);
\filldraw (0,0) circle (0.6pt) node[align=center,   below] {$p^{k-1}$};
\filldraw (1,0.2) circle (0.6pt) node[align=center, below] {$p^{k}$};
\filldraw (1.5,0.3) circle (0.5pt) node[align=center, below] {$\widehat p^k$};
\filldraw (2,0.75) circle (0.5pt) node[align=center, right] {$p^{k+1}$};
\draw (2.55,0) circle (0.0pt) node[align=center, below] {$\{p\in \mathcal{H}\;|\;\varphi_k(p)=0\}$};
\draw[thick,->,blue] (1,0.2) -- (2.5,1.025);
\filldraw (2.5,1.025) circle (0.6pt) node[align=center, right] {$\widehat p^{k+1}$};
\end{tikzpicture}
\vspace{-0.5ex}
\caption{Geometric interpretation of steps~\eqref{eq:ext}
and~\eqref{eq:ext.proj} in Algorithm~\ref{inertial_projective}.  The (overrelaxed)
projection step~\eqref{eq:ext.proj} is orthogonal to the separating hyperplane
$\{p\in \mathcal{H}\;|\;\varphi_k(p)=0\}$, which can differ from the direction between $p^{k-1}$, $p^k$,
and $\widehat p^k$ when $\alpha_k > 0$.}\label{fig:arrows}
\end{figure}
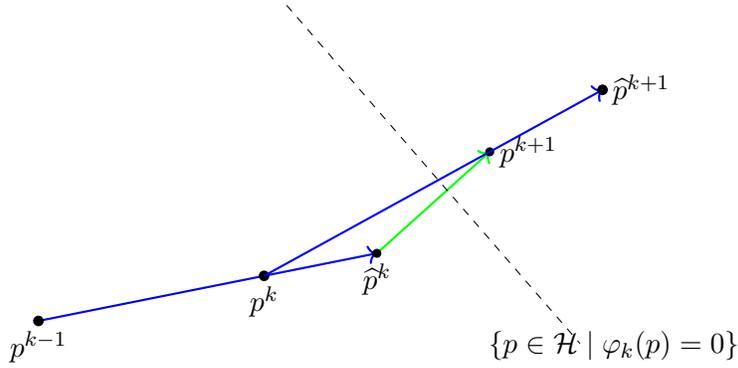
\item[\mbox{(iii)}] If $\alpha_k\equiv 0$, in which case $\widehat p^k=p^k$ in \eqref{eq:ext}, then it follows that Algorithm 
\ref{inertial_projective} reduces to the well-known linear separator-projection method for finding a point in 
$\mathcal{S}\subset \HH$ (see, e.g., \cite{alot.combet.shah.2014}).
\item[\mbox{(iv)}] As we mentioned early, Algorithm \ref{inertial_projective} will be used in the next section for analyzing the
convergence of Algorithm \ref{alg:iPSM}. The main convergence results for Algorithm \ref{inertial_projective} will be stated in 
this section, in Theorems \ref{lm:conv.proj} and \ref{thm:second.conv} below.
\item[\mbox{(v)}] The construction of an affine function $\varphi_k(\cdot)$ as in step 2 of Algorithm \ref{inertial_projective} in general depends on the particular structure of the set $\mathcal{S}$. For example, in Algorithm \ref{alg:iPSM} below, which is a special instance of Algorithm \ref{inertial_projective}, $\varphi_k(\cdot)$ is explicitly formulated by using points in the graphs of maximal monotone operators (see Eq. \eqref{eq:sep.F}).
\end{itemize}

\mgap

Next lemma plays the role of Fej\'er-monotonicity for Algorithm \ref{inertial_projective} and will be used in the proofs of
Theorems \ref{lm:conv.proj} and \ref{thm:second.conv}.

\begin{lemma}
 \label{lm:inv}
Consider the sequences evolved by \emph{Algorithm \ref{inertial_projective}} and let $\widetilde p^{k+1}$ be as
in \eqref{eq:001}. For an arbitrary $p\in \mathcal{S}$, define
\begin{align}
 \label{eq:def.hk}
  h_k=\norm{p^k-p}^2\qquad \forall k\geq -1.
\end{align}
Then the following hold:
\begin{itemize}
\item[\emph{(a)}] For all $k\geq 0$,
\begin{align*}
 h_{k+1}-h_k-\alpha_k(h_k-h_{k-1}) \leq \alpha_k(1+\alpha_k)\norm{p^k-p^{k-1}}^2 - s_{k+1},
\end{align*}
where
\begin{align}
  \label{eq:s.k}
  s_{k+1}:= \beta_k(2-\beta_k)\norm{\widehat p^{\,k}-\widetilde p^{\,k+1}}^2\qquad \forall k\geq 0.
 \end{align}
\item[\emph{(b)}] For all $k\geq 0$,
\begin{align}
  \label{eq:97}
   h_{k+1}-h_k-\alpha_k(h_k-h_{k-1}) \leq  \gamma_k \norm{p^k-p^{k-1}}^2
  - (2-\overline{\beta})\overline{\beta}^{\,-1}(1-\alpha_k)\norm{p^{k+1}-p^k}^2,
\end{align}
where
\begin{align}
 \label{eq:def.gamma}
 \gamma_k :=  2\left(1 - \overline{\beta}^{\,-1}\right)\alpha_k^2+2\overline{\beta}^{\,-1}\alpha_k\qquad \forall k\geq 0.
\end{align}
\end{itemize}
\end{lemma}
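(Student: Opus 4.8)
The plan is to establish part (a) first by carefully expanding the squared norms through the two-step update $p^k \to \widehat p^{\,k} \to p^{k+1}$, and then to derive part (b) as a consequence of (a) by re-expressing the "projection decrement" term $s_{k+1}$ in terms of $\|p^{k+1}-p^k\|^2$.

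\emph{Part (a).} First I would fix $p\in\mathcal S$ and decompose $\|p^{k+1}-p\|^2$ using the relaxed-projection identity \eqref{eq:333}, namely $p^{k+1}=\widehat p^{\,k}+\beta_k(\widetilde p^{\,k+1}-\widehat p^{\,k})$, together with the Hilbert-space identity \eqref{eq:ineq.s} applied with $t=\beta_k$ to write
\[
 \|p^{k+1}-p\|^2 = \beta_k\|\widetilde p^{\,k+1}-p\|^2 + (1-\beta_k)\|\widehat p^{\,k}-p\|^2 - \beta_k(1-\beta_k)\|\widetilde p^{\,k+1}-\widehat p^{\,k}\|^2.
\]
Next I would use the standard estimate for the orthogonal projection $\widetilde p^{\,k+1}$ onto the halfspace $\{\varphi_k\le 0\}$ (which contains $\mathcal S$, hence $p$): since $\varphi_k(p)\le 0$, one has the firm-nonexpansiveness-type inequality $\|\widetilde p^{\,k+1}-p\|^2 \le \|\widehat p^{\,k}-p\|^2 - \|\widehat p^{\,k}-\widetilde p^{\,k+1}\|^2$. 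Substituting this into the previous display and collecting terms, the $\|\widehat p^{\,k}-p\|^2$ coefficients combine to $1$ and the $\|\widehat p^{\,k}-\widetilde p^{\,k+1}\|^2$ coefficient becomes $-\beta_k - \beta_k(1-\beta_k) = -\beta_k(2-\beta_k)$, giving
\[
 \|p^{k+1}-p\|^2 \le \|\widehat p^{\,k}-p\|^2 - \beta_k(2-\beta_k)\|\widehat p^{\,k}-\widetilde p^{\,k+1}\|^2 = \|\widehat p^{\,k}-p\|^2 - s_{k+1}.
\]
Finally I would expand $\|\widehat p^{\,k}-p\|^2$ using the inertial step \eqref{eq:ext}, $\widehat p^{\,k}=p^k+\alpha_k(p^k-p^{k-1})$: a direct expansion (or again \eqref{eq:ineq.s} with $t=1+\alpha_k$, viewing $\widehat p^{\,k}$ as the affine combination $(1+\alpha_k)p^k-\alpha_k p^{k-1}$) yields
\[
 \|\widehat p^{\,k}-p\|^2 = (1+\alpha_k)h_k - \alpha_k h_{k-1} + \alpha_k(1+\alpha_k)\|p^k-p^{k-1}\|^2,
\]
and rearranging $h_{k+1}=\|p^{k+1}-p\|^2$ against this gives exactly the claimed inequality in (a).

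\emph{Part (b).} Here the plan is to lower-bound $s_{k+1}=\beta_k(2-\beta_k)\|\widehat p^{\,k}-\widetilde p^{\,k+1}\|^2$ by a multiple of $\|p^{k+1}-p^k\|^2$. Using \eqref{eq:333}, $p^{k+1}-\widehat p^{\,k}=\beta_k(\widetilde p^{\,k+1}-\widehat p^{\,k})$, so $\|\widehat p^{\,k}-\widetilde p^{\,k+1}\|^2 = \beta_k^{-2}\|p^{k+1}-\widehat p^{\,k}\|^2$, hence $s_{k+1}=(2-\beta_k)\beta_k^{-1}\|p^{k+1}-\widehat p^{\,k}\|^2 \ge (2-\overline\beta)\overline\beta^{\,-1}\|p^{k+1}-\widehat p^{\,k}\|^2$, using that $t\mapsto(2-t)/t$ is decreasing on $(0,2)$ and $\beta_k\le\overline\beta<2$. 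It then remains to relate $\|p^{k+1}-\widehat p^{\,k}\|^2$ to $\|p^{k+1}-p^k\|^2$ and $\|p^k-p^{k-1}\|^2$; since $p^{k+1}-\widehat p^{\,k} = (p^{k+1}-p^k) - \alpha_k(p^k-p^{k-1})$, I would apply \eqref{eq:ineq.s} one more time — writing $p^{k+1}-p^k = (p^{k+1}-\widehat p^{\,k}) + \alpha_k(p^k - p^{k-1})$ and using $\|a+b\|^2 \le$-type manipulations, more precisely \eqref{eq:ineq.s} with $t=1-\alpha_k$ on the convex combination — to get $(1-\alpha_k)\|p^{k+1}-p^k\|^2 \le \|p^{k+1}-\widehat p^{\,k}\|^2 + \alpha_k\|p^k-p^{k-1}\|^2 - (\text{nonneg.})$, i.e. $-s_{k+1} \le -(2-\overline\beta)\overline\beta^{\,-1}(1-\alpha_k)\|p^{k+1}-p^k\|^2 + (2-\overline\beta)\overline\beta^{\,-1}\alpha_k\|p^k-p^{k-1}\|^2$. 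Plugging this bound on $-s_{k+1}$ into part (a) and merging the two $\|p^k-p^{k-1}\|^2$ coefficients, namely $\alpha_k(1+\alpha_k) + (2-\overline\beta)\overline\beta^{\,-1}\alpha_k$, and simplifying should produce precisely $\gamma_k = 2(1-\overline\beta^{\,-1})\alpha_k^2 + 2\overline\beta^{\,-1}\alpha_k$ as in \eqref{eq:def.gamma}.

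\emph{Main obstacle.} The calculations themselves are elementary; the delicate points are bookkeeping the coefficients so that the $\|\widehat p^{\,k}-\widetilde p^{\,k+1}\|^2$ terms combine exactly to $-\beta_k(2-\beta_k)$ in (a), and, in (b), choosing the right convex-combination parameter in \eqref{eq:ineq.s} and the right direction of the inequality so that the residual $\|p^k-p^{k-1}\|^2$-coefficient collapses to the stated $\gamma_k$ rather than something slightly larger. I would pay particular attention to the sign and to the fact that \eqref{eq:ineq.s} produces an \emph{equality}, so no slack is lost except where I deliberately drop the nonnegative cross term; getting $\gamma_k$ to match \eqref{eq:def.gamma} on the nose is the one place where an off-by-a-term error is easy to make.
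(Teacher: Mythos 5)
Your part~(a) is correct and is essentially the paper's own argument: the same use of \eqref{eq:ineq.s} with $t=\beta_k$ applied to $p^{k+1}=(1-\beta_k)\widehat p^{\,k}+\beta_k\widetilde p^{\,k+1}$, the same projection inequality (the paper's \eqref{eq:002}, justified by $\inner{\widehat p^{\,k}-\widetilde p^{\,k+1}}{\widetilde p^{\,k+1}-p}\geq 0$), and the same expansion \eqref{eq:005} of $\norm{\widehat p^{\,k}-p}^2$; the bookkeeping you describe does yield the coefficient $-\beta_k(2-\beta_k)$ exactly as in \eqref{eq:1832}, and then (a) follows.

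Part~(b) follows the paper's route (first $s_{k+1}=(2\beta_k^{-1}-1)\norm{\widehat p^{\,k}-p^{k+1}}^2\geq(2\overline\beta^{\,-1}-1)\norm{\widehat p^{\,k}-p^{k+1}}^2$, then a lower bound on $\norm{\widehat p^{\,k}-p^{k+1}}^2$), but the intermediate inequality you write down is too lossy and does \emph{not} collapse to $\gamma_k$. You assert $(1-\alpha_k)\norm{p^{k+1}-p^k}^2\leq\norm{p^{k+1}-\widehat p^{\,k}}^2+\alpha_k\norm{p^k-p^{k-1}}^2$, whereas the correct consequence of \eqref{eq:ineq.s} (equivalently, of Cauchy--Schwarz plus Young as in the paper's \eqref{eq:008}) carries an extra factor $(1-\alpha_k)$ on the last term: $\norm{\widehat p^{\,k}-p^{k+1}}^2\geq(1-\alpha_k)\bigl(\norm{p^{k+1}-p^k}^2-\alpha_k\norm{p^k-p^{k-1}}^2\bigr)$. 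With your version, the merged coefficient of $\norm{p^k-p^{k-1}}^2$ is $\alpha_k(1+\alpha_k)+(2\overline\beta^{\,-1}-1)\alpha_k=\alpha_k^2+2\overline\beta^{\,-1}\alpha_k$, which exceeds $\gamma_k=2(1-\overline\beta^{\,-1})\alpha_k^2+2\overline\beta^{\,-1}\alpha_k$ by $(2\overline\beta^{\,-1}-1)\alpha_k^2>0$ whenever $\alpha_k>0$ (recall $\overline\beta<2$, so $2\overline\beta^{\,-1}>1$); you would therefore prove a strictly weaker inequality than \eqref{eq:97}. With the corrected bound the coefficient becomes $\alpha_k(1+\alpha_k)+(2\overline\beta^{\,-1}-1)(1-\alpha_k)\alpha_k=\gamma_k$ on the nose, and the rest of your plan goes through. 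This is precisely the ``off-by-a-term'' pitfall you flagged yourself; the fix is a one-line correction, not a change of strategy.
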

\begin{proof}
(a) We shall first prove that
\begin{align}
  \label{eq:1832}
 \norm{p^{k+1}-p}^2 + \beta_k(2-\beta_k)\norm{\widehat p^{\,k}-\widetilde p^{\,k+1}}^2 
\leq \norm{\widehat p^{\,k}-p}^2\qquad \forall p\in \mathcal{S},
\end{align}
where $\widetilde p^{\,k+1}$ is as in \eqref{eq:001}, i.e., it is the projection of $\widehat p^k$ onto
the semispace $\{p\in \HH\;|\;\varphi_k(p)\leq 0\}$.
To this end, note first that, for all $p\in \mathcal{S}$,
\begin{align}
\nonumber
 \norm{\widehat p^{\,k}-p}^2 - \norm{\widetilde p^{\,k+1}-p}^2 &=
 \norm{\widehat p^{\,k}-\widetilde p^{\,k+1}}^2+2\inner{\widehat p^{\,k}- \widetilde p^{\,k+1}}{\widetilde p^{\,k+1}-p}\\
  \label{eq:002}
       &\geq \norm{\widehat p^{\,k}-\widetilde p^{\,k+1}}^2
\end{align}
where we have used \eqref{eq:001} and the fact that
$\mathcal{S}\subset \{p\in \HH\,|\,\varphi_k(p)\leq 0\}$ (see Step 2 of Algorithm \ref{inertial_projective}) to obtain
the inequality $\inner{\widehat p^{\,k}- \widetilde p^{\,k+1}}{\widetilde p^{\,k+1}-p}\geq 0$.
Note now that \eqref{eq:333} is trivially equivalent to 
$p^{k+1}= (1-\beta_k)\widehat p^{\,k}+\beta_k \widetilde p^{\,k+1}$,
which in turn combined with the property \eqref{eq:ineq.s} yields
\begin{align*}
 \norm{p^{k+1}-p}^2= (1-\beta_k)\norm{\widehat p^{\,k}-p}^2 + \beta_k \norm{\widetilde p^{\,k+1}-p}^2
  - \beta_k(1-\beta_k)\norm{\widehat p^{\,k}-\widetilde p^{\,k+1}}^2
\end{align*}
or, equivalently, 
\begin{align}
 \label{eq:003}
\beta_k\left(\norm{\widehat p^{\,k}-p}^2 - \norm{\widetilde p^{\,k+1}-p}^2\right) = \norm{\widehat p^{\,k}-p}^2
- \beta_k(1-\beta_k)\norm{\widehat p^{\,k}-\widetilde p^{\,k+1}}^2 - \norm{p^{k+1}-p}^2.
\end{align}
The desired inequality \eqref{eq:1832} now follows by multiplying the inequality in \eqref{eq:002} by $\beta_k\geq 0$, 
by combining the resulting inequality with \eqref{eq:003} and by using some simple algebraic manipulations.

Now, from \eqref{eq:ext} we have
\begin{align}
 \label{eq:101}
  p^k-p=\frac{1}{1+\alpha_k}(\widehat p^{\,k}-p)+\frac{\alpha_k}{1+\alpha_k}(p^{k-1}-p)\;\;\mbox{and}\;\;
  \widehat p^{\,k}-p^{k-1}=(1+\alpha_k)(p^k-p^{k-1}).
\end{align}
Using \eqref{eq:ineq.s} and the first identity  in \eqref{eq:101} we obtain
\begin{align*}
  \norm{p^k-p}^2=\frac{1}{1+\alpha_k}\norm{\widehat p^{\,k}-p}^2+\frac{\alpha_k}{1+\alpha_k}\norm{p^{k-1}-p}^2-\frac{\alpha_k}{(1+\alpha_k)^2}\norm{\widehat p^{\,k}-p^{k-1}}^2,
\end{align*}
which combined with the second identity in \eqref{eq:101} and some algebraic manipulations gives
\begin{align}\label{eq:005}
  \norm{\widehat p^{\,k}-p}^2=(1+\alpha_k)\norm{p^k-p}^2-\alpha_k\norm{p^{k-1}-p}^2+\alpha_k(1+\alpha_k)\norm{p^k-p^{k-1}}^2.
\end{align}
Hence, (a) follows directly from \eqref{eq:1832}, \eqref{eq:005} and the definitions of $h_k$ and $s_{k+1}$ in
\eqref{eq:def.hk} and \eqref{eq:s.k}, respectively.

(b) Note that \eqref{eq:333} is also trivially equivalent to  $\widehat p^k - \widetilde p^{k+1} =  \beta_k^{-1}(\widehat p^k -p^{k+1})$, which in turn combined with the definition of $s_{k+1}$ in \eqref{eq:s.k} and the fact 
that $\beta_k\leq \overline{\beta}$ -- see Step 2 of Algorithm \ref{inertial_projective} -- yields
\begin{align}
 \label{eq:07}
 s_{k+1}=\beta_k(2-\beta_k) \norm{\widehat p^{\,k}-\widetilde p^{\,k+1}}^2
 =\left(2\beta_k^{-1}-1\right)\norm{\widehat p^{\,k}-p^{k+1}}^2
\geq \left(2\overline{\beta}^{\,-1}-1\right)\norm{\widehat p^{\,k}-p^{k+1}}^2.
\end{align}
Using \eqref{eq:ext}, the Cauchy-Schwarz inequality, the Young inequality ($2ab\leq a^2+b^2$ with $a=\norm{p^{k+1}-p^k}$ and $b=\norm{p^k-p^{k-1}}$) and some algebraic manipulations, we find
\begin{align}\label{eq:008}
  \norm{\widehat p^{\,k}-p^{k+1}}^2 = &\norm{p^{k+1}-p^{k}}^2+\alpha_k^2\norm{p^k-p^{k-1}}^2-2\alpha_k\langle p^{k+1}-p^k,p^k-p^{k-1}\rangle\nonumber \\
\geq&\norm{p^{k+1}-p^k}^2+\alpha_k^2\norm{p^k-p^{k-1}}^2-2\alpha_k\norm{p^{k+1}-p^k}\norm{p^k-p^{k-1}}\nonumber\\
\geq&\norm{p^{k+1}-p^k}^2+\alpha_k^2\norm{p^k-p^{k-1}}^2-\alpha_k\big(\norm{p^{k+1}-p^k}^2+\norm{p^k-p^{k-1}}^2\big)\nonumber\\
  =&(1-\alpha_k)\left(\norm{p^{k+1}-p^k}^2-\alpha_k \norm{p^k-p^{k-1}}^2\right).
 \end{align}
 From \eqref{eq:07} and \eqref{eq:008} we obtain
 \begin{align*}
  s_{k+1}\geq \left(2\overline{\beta}^{\,-1}-1\right)(1-\alpha_k)\left(\norm{p^{k+1}-p^k}^2-\alpha_k \norm{p^k-p^{k-1}}^2\right),
 \end{align*}
which in turn combined with the inequality in (a) and \eqref{eq:def.gamma}, and after some simple manipulations, gives exactly the desired inequality in (b).
\end{proof}

\mgap
Next is our first result on the (asymptotic) convergence of Algorithm \ref{inertial_projective}. The key assumption is the summability condition \eqref{eq:sum.p}, for which a sufficient condition (only depending on the parameters $\alpha_k$ and 
$\beta_k$) will be given in Theorem \ref{thm:second.conv} -- see conditions \eqref{eq:alpha_k}, \eqref{eq:beta(alpha)}
and Figure \ref{fig02}.

\begin{theorem}[First result on the convergence of Algorithm \ref{inertial_projective}]
 \label{lm:conv.proj}
 Let $\{p^k\}$, $\{\varphi_k\}$, $\{\widehat p^k\}$ and $\{\alpha_k\}$ be generated by \emph{Algorithm \ref{inertial_projective}} and
 assume that
 \begin{align}
 \label{eq:sum.p}
   \sum_{k=0}^\infty\,\alpha_k\norm{p^k-p^{k-1}}^2<\infty.
 \end{align}
Then the following hold:
\begin{itemize}
 \item[\emph{(a)}] $\{p^k\}$ and $\{\widehat p^k\}$ are bounded sequences.
 \item[\emph{(b)}] If every weak cluster point of $\{p^k\}$ belongs to $\mathcal{S}$, then $\{p^k\}$ converges weakly to some element in $\mathcal{S}$.
%
%
\item[\emph{(c)}] We have, 
\begin{align*}
 \dfrac{\max\{0,\varphi_k(\widehat p^k)\}}{\norm{\nabla \varphi_k}}\to 0.
\end{align*}
\end{itemize}
\end{theorem}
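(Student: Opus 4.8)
The plan is to derive everything from the Fej\'er-type inequality in Lemma~\ref{lm:inv}(b), whose right-hand side contains the ``good'' terms $\gamma_k\norm{p^k-p^{k-1}}^2$ and $-(2-\overline\beta)\overline\beta^{-1}(1-\alpha_k)\norm{p^{k+1}-p^k}^2$. The first key observation is that the summability hypothesis~\eqref{eq:sum.p} controls $\sum_k\gamma_k\norm{p^k-p^{k-1}}^2$: since $\alpha_k\in[0,\alpha]$ with $\alpha<1$, from~\eqref{eq:def.gamma} we have $\gamma_k\le C\alpha_k$ for a constant $C$ depending only on $\overline\beta$ (and $\alpha$), hence $\sum_k\gamma_k\norm{p^k-p^{k-1}}^2<\infty$. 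Writing $\theta_k:=h_k-h_{k-1}$, inequality~\eqref{eq:97} reads $\theta_{k+1}\le\alpha_k\theta_k+\gamma_k\norm{p^k-p^{k-1}}^2$, i.e. the classical ``quasi-Fej\'er with inertia'' recursion.

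Next I would invoke the standard lemma (as in Alvarez--Attouch~\cite{alv.att-iner.svva01}) for nonnegative sequences satisfying such a recursion: if $\theta_{k+1}\le\alpha_k\theta_k+\delta_k$ with $\alpha_k\le\alpha<1$, $\delta_k\ge0$, $\sum_k\delta_k<\infty$, and the underlying sequence $h_k$ is bounded below (here $h_k\ge0$ by~\eqref{eq:def.hk}), then $\{h_k\}$ converges, $\sum_k[\theta_k]_+<\infty$, and in particular $\{h_k\}$ is bounded. Concretely, set $\mu_k:=[\theta_k]_+$; then $\mu_{k+1}\le\alpha\mu_k+\delta_k$ gives $\sum_k\mu_k\le(1-\alpha)^{-1}(\mu_0+\sum_k\delta_k)<\infty$, and $h_{k+1}\le h_0+\sum_{j\le k}\mu_{j+1}$ stays bounded. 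This yields part (a): $\{p^k\}$ is bounded because $h_k=\norm{p^k-p}^2$ is bounded for the fixed $p\in\mathcal S$; and $\{\widehat p^k\}$ is then bounded by~\eqref{eq:ext} since $\norm{\widehat p^k-p^k}=\alpha_k\norm{p^k-p^{k-1}}\to0$ (indeed the series of squares converges) and $\{p^k\}$ is bounded. Boundedness of $\mathcal S$ is not needed; only $\mathcal S\neq\emptyset$.

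For part (c), I would sum~\eqref{eq:97} over $k$. Since $\{h_k\}$ converges, $\sum_k\theta_k$ is a convergent (telescoping-controlled) series; combined with $\sum_k\gamma_k\norm{p^k-p^{k-1}}^2<\infty$ and the fact that $\alpha_k\theta_k$ is summable in absolute value (as $\sum|\theta_k|<\infty$ from the argument above, since $\theta_k=\mu_k-[{-\theta_k}]_+$ and one controls the negative part by the convergence of $h_k$), the nonnegative term $(2-\overline\beta)\overline\beta^{-1}(1-\alpha_k)\norm{p^{k+1}-p^k}^2$ is summable, hence $\norm{p^{k+1}-p^k}\to0$ (using $1-\alpha_k\ge1-\alpha>0$ and $2-\overline\beta>0$). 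Then from~\eqref{eq:ext.proj}, $\norm{p^{k+1}-\widehat p^k}=\beta_k\,\max\{0,\varphi_k(\widehat p^k)\}/\norm{\nabla\varphi_k}\ge\underline\beta\,\max\{0,\varphi_k(\widehat p^k)\}/\norm{\nabla\varphi_k}$, while $\norm{p^{k+1}-\widehat p^k}\le\norm{p^{k+1}-p^k}+\norm{p^k-\widehat p^k}\to0$; dividing by $\underline\beta>0$ gives $\max\{0,\varphi_k(\widehat p^k)\}/\norm{\nabla\varphi_k}\to0$, which is (c).

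Part (b) is then the routine Opial-type argument: $\{h_k\}=\{\norm{p^k-p}^2\}$ converges for \emph{every} $p\in\mathcal S$ (the entire argument above applies verbatim to any $p\in\mathcal S$), so $\{p^k\}$ is Fej\'er-convergent to $\mathcal S$ in the asymptotic sense; combined with the hypothesis that every weak cluster point lies in $\mathcal S$ and Opial's lemma, $\{p^k\}$ converges weakly to a single point of $\mathcal S$. The main obstacle I anticipate is the bookkeeping in the intermediate step — converting the inertial recursion~\eqref{eq:97} into genuine summability of $\norm{p^{k+1}-p^k}^2$ and of the positive parts $[\theta_k]_+$ — since one must handle the sign of $\theta_k$ carefully (the term $\alpha_k\theta_k$ can be negative) and make sure the convergence of $\{h_k\}$ and the summability of the error terms are used in the right order; the rest is standard Fej\'er monotonicity plus Opial.
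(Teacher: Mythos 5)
Your argument is correct and, for parts (a) and (b), follows the same Alvarez--Attouch-plus-Opial route as the paper; the genuine difference lies in which half of Lemma \ref{lm:inv} you use and, consequently, how you reach part (c). The paper works entirely with Lemma \ref{lm:inv}(a): taking $\delta_k=\alpha_k(1+\alpha_k)\norm{p^k-p^{k-1}}^2$, hypothesis \eqref{eq:sum.p} makes $\sum_k\delta_k<\infty$, so Lemma \ref{lm:alv.att} yields simultaneously that $\lim_k h_k$ exists \emph{and} that $\sum_k s_{k+1}<\infty$; since $s_{k+1}=\beta_k(2-\beta_k)\norm{\widehat p^{\,k}-\widetilde p^{\,k+1}}^2$ with $\beta_k(2-\beta_k)$ bounded away from zero, and since by \eqref{eq:001} and \eqref{eq:333} the quantity in (c) equals $\norm{\widetilde p^{\,k+1}-\widehat p^{\,k}}$, part (c) is immediate. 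You instead invoke Lemma \ref{lm:inv}(b), re-sum \eqref{eq:97} to get $\sum_k\norm{p^{k+1}-p^k}^2<\infty$, and then pass to (c) via $\underline{\beta}\max\{0,\varphi_k(\widehat p^{\,k})\}/\norm{\nabla\varphi_k}\leq\norm{p^{k+1}-\widehat p^{\,k}}\leq\norm{p^{k+1}-p^k}+\alpha_k\norm{p^k-p^{k-1}}\to 0$. This works, and the bookkeeping you worry about at the end is lighter than you fear: to bound the partial sums of $(2-\overline{\beta})\overline{\beta}^{\,-1}(1-\alpha_k)\norm{p^{k+1}-p^k}^2$ after telescoping, you only need $\sum_k\alpha_k\max\{0,h_k-h_{k-1}\}\leq\alpha\sum_k\max\{0,h_k-h_{k-1}\}<\infty$ together with $h_{K+1}\geq 0$; full absolute summability of $h_k-h_{k-1}$ is not required. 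Your route also buys a small bonus the paper does not record at this point: under \eqref{eq:sum.p} alone you obtain the conclusion \eqref{sum(p_k)} of Theorem \ref{thm:second.conv}(a) without the monotonicity of $\{\alpha_k\}$ or the coupling \eqref{eq:beta(alpha)}. The price is a longer detour for (c), where the paper's choice of $s_{k+1}$ makes the target quantity literally the summable term.
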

\begin{proof}
Defining $\delta_k=\alpha_k(1+\alpha_k)\norm{p^k-p^{k-1}}^2$ and using Lemma \ref{lm:inv}(a), we conclude that condition \eqref{eq:alv.att02} in Lemma \ref{lm:alv.att} below holds with $h_k$ and $s_{k+1}$ as in
\eqref{eq:def.hk} and \eqref{eq:s.k}, respectively. 
Hence, using the assumption \eqref{eq:sum.p}, Lemma \ref{lm:alv.att}(b) and \eqref{eq:def.hk}, we conclude that
$$\lim_{k\to \infty}\,\|p^k-p\|\;\; \mbox{exists for all}\;\; p\in \mathcal{S}.$$
This gives, in particular, that $\{p^k\}$ and $\{\widehat p^k\}$ are bounded (see \eqref{eq:ext})
and, after using Lemma \ref{lm:opial} below, that  $\{p^k\}$ converges weakly to some element in $\mathcal{S}$ whenever 
every weak cluster point of $\{p^k\}$ belongs to $\mathcal{S}$. So we have proved (a) and (b).

To prove (c), note first that from \eqref{eq:333} we have
\begin{align*}
 \dfrac{\max\{0,\varphi_k(\widehat p^k)\}}{\norm{\nabla \varphi_k}} = \norm{\widetilde p^{k+1}-\widehat p^k}.
\end{align*}
%
%
%
Hence, to conclude the proof of (c), it suffices to prove that $\norm{\widetilde p^{\,k+1}-\widehat p^{\,k}}\to 0$.
To this end, note that \eqref{eq:sum.p} combined with the definition of $\delta_k$ above,
the fact that $\alpha_k^2\leq \alpha_k$ 
 and Lemma \ref{lm:alv.att}(a) 
gives $\sum_{k=0}^\infty\,s_{k+1}<\infty$, with $s_{k+1}$ (for all $k\geq 0$) as in \eqref{eq:s.k}, and so
$s_{k+1}\to 0$. The desired result now follows form this fact, \eqref{eq:s.k} and the fact that 
$0<\underline{\beta}\leq \beta_k\leq \overline{\beta}<2$ (see Step 2 of Algorithm \ref{inertial_projective}).
\end{proof}

\mgap


\begin{theorem}[Second result on the convergence of Algorithm \ref{inertial_projective}]
\label{thm:second.conv}
  Let $\{p^k\}$ and $\{\alpha_k\}$ be generated by \emph{Algorithm \ref{inertial_projective}}.
  Assume that $\alpha\in [0,1)$, $\overline{\beta}\in (0,2)$ and $\{\alpha_k\}$ satisfy the following \emph{(}for some
  $\overline{\alpha}>0$\emph{)}:
  \begin{align}\label{eq:alpha_k}
      0\leq \alpha_k\leq \alpha_{k+1}\leq \alpha<\overline{\alpha}<1\qquad \forall k\geq 0
  \end{align}
 and
  \begin{align}\label{eq:beta(alpha)}
    \overline{\beta}=\overline{\beta}(\overline{\alpha}):=\dfrac{2(\overline{\alpha}-1)^2}
    {2(\overline{\alpha}-1)^2+3\overline{\alpha}-1}.
  \end{align}
  Then the following hold:
\begin{itemize}
\item[\emph{(a)}] We have
\begin{align}
   \label{sum(p_k)}
 \sum_{k=0}^{\infty}\,\norm{p^k-p^{k-1}}^2<\infty.
 \end{align}
\item[\emph{(b)}] Under the assumptions \eqref{eq:alpha_k} and \eqref{eq:beta(alpha)},
  if every weak cluster point of $\{p^k\}$ belongs to $\mathcal{S}$, then $\{p^k\}$ converges weakly to some element in $\mathcal{S}$.
\end{itemize}
 \end{theorem}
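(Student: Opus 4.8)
The plan splits naturally. Part~(b) is essentially a corollary of part~(a) together with Theorem~\ref{lm:conv.proj}: once the square-summability \eqref{sum(p_k)} is known, the uniform bound $\alpha_k\le\alpha<1$ gives $\sum_k\alpha_k\norm{p^k-p^{k-1}}^2\le\alpha\sum_k\norm{p^k-p^{k-1}}^2<\infty$, so hypothesis \eqref{eq:sum.p} of Theorem~\ref{lm:conv.proj} holds and Theorem~\ref{lm:conv.proj}(b) yields the asserted weak convergence. Thus the whole content is \eqref{sum(p_k)}, which I would obtain from Lemma~\ref{lm:inv}(b) by a telescoping ("Fej\'er-type") argument. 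To fix notation, pick $p\in\mathcal{S}$ and set $h_k=\norm{p^k-p}^2$, $a_k=\norm{p^k-p^{k-1}}^2$, $c:=(2-\overline{\beta})\overline{\beta}^{-1}=2\overline{\beta}^{-1}-1>0$, and $\gamma(t):=2(1-\overline{\beta}^{-1})t^2+2\overline{\beta}^{-1}t=2t\bigl(\overline{\beta}^{-1}(1-t)+t\bigr)$, so that $\gamma_k=\gamma(\alpha_k)\ge0$ and Lemma~\ref{lm:inv}(b) reads $h_{k+1}-h_k-\alpha_k(h_k-h_{k-1})\le\gamma_k a_k-c(1-\alpha_k)a_{k+1}$ for all $k\ge0$.

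The algebraic heart of the proof is the role of the formula \eqref{eq:beta(alpha)}. Consider the quadratic $\psi(t):=c(1-t)-\gamma(t)$. Using $2(\overline{\alpha}-1)^2+3\overline{\alpha}-1=2\overline{\alpha}^2-\overline{\alpha}+1$, a one-line computation shows that \eqref{eq:beta(alpha)} is exactly the relation $\psi(\overline{\alpha})=0$; moreover $\psi(0)=c>0$ since $\overline{\beta}<2$. From the leading coefficient $2(\overline{\beta}^{-1}-1)$ of $\psi$ one sees that either $\overline{\beta}>1$, so $\psi$ is concave and hence lies above the chord through $(0,\psi(0))$ and $(\overline{\alpha},0)$ on $[0,\overline{\alpha})$, or $\overline{\beta}\le1$, so $\psi$ is nonincreasing on $[0,1]\supset[0,\overline{\alpha}]$ (affine with negative slope when $\overline{\beta}=1$, convex with vertex beyond $1$ when $\overline{\beta}<1$); in either case $\psi>0$ on $[0,\overline{\alpha})$. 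Since $0\le\alpha_k\le\alpha<\overline{\alpha}$ and $\psi$ is continuous, $\delta:=\min_{t\in[0,\alpha]}\psi(t)>0$, that is,
\begin{align*}
 \gamma(s)\le c(1-s)-\delta\qquad\text{for all }s\in[0,\alpha];
\end{align*}
this strict gap is precisely what the hypothesis $\alpha<\overline{\alpha}$ buys us.

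Now define $\Phi_k:=h_k-\alpha_k h_{k-1}+\gamma_k a_k$. Substituting Lemma~\ref{lm:inv}(b) into $\Phi_{k+1}-\Phi_k$ and using $\alpha_k\le\alpha_{k+1}$ (so $(\alpha_k-\alpha_{k+1})h_k\le0$ and $c(1-\alpha_{k+1})\le c(1-\alpha_k)$) together with $\gamma_{k+1}=\gamma(\alpha_{k+1})\le c(1-\alpha_{k+1})-\delta$, one gets
\begin{align*}
 \Phi_{k+1}-\Phi_k\le(\alpha_k-\alpha_{k+1})h_k+\bigl(\gamma_{k+1}-c(1-\alpha_k)\bigr)a_{k+1}\le-\delta a_{k+1}\qquad\forall k\ge0.
\end{align*}
Because $p^0=p^{-1}$ forces $a_0=0$ and $h_{-1}=h_0$, we have $\Phi_0=(1-\alpha_0)h_0\ge0$, and the above shows $\{\Phi_k\}$ is nonincreasing. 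Hence $h_k-\alpha_k h_{k-1}\le\Phi_k\le\Phi_0\le h_0$ (as $\gamma_k a_k\ge0$), and a trivial induction using $\alpha_k\le\alpha$ gives $h_k\le h_0/(1-\alpha)=:H$ for all $k$. Therefore $\Phi_{N+1}\ge h_{N+1}-\alpha_{N+1}h_N\ge-\alpha H$ for all $N$, and summing $\Phi_{k+1}-\Phi_k\le-\delta a_{k+1}$ gives $\delta\sum_{k=0}^{N}a_{k+1}\le\Phi_0-\Phi_{N+1}\le\Phi_0+\alpha H$ uniformly in $N$; letting $N\to\infty$ and recalling $a_0=0$ establishes \eqref{sum(p_k)}.

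I expect the main obstacle to be identifying the right functional. The naive choice with a \emph{constant} coefficient in front of $a_k$ does not work: when $\alpha$ is close to $\overline{\alpha}$ there is in general no constant $\theta$ with $\sup_k\gamma(\alpha_k)\le\theta<c(1-\alpha)$, so one is forced to use the step-dependent coefficient $\gamma_k$ in $\Phi_k$, and the precise formula \eqref{eq:beta(alpha)} is exactly what makes the accompanying quadratic $\psi$ nonnegative on $[0,\overline{\alpha}]$. Once this is seen, the remaining work (the sign bookkeeping in $\Phi_{k+1}-\Phi_k$, the boundedness of $\{h_k\}$, and the telescoping) is routine.
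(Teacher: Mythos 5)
Your proposal is correct and follows essentially the same route as the paper: your $\Phi_k$ is exactly the paper's Lyapunov functional $\mu_k$ from \eqref{eq:muka}, your $\psi$ coincides with the quadratic $q$ in \eqref{eq:q(alpha)}, and the descent estimate, the bound $h_k\le h_0/(1-\alpha)$, and the telescoping are the same. The only (harmless) difference is that you verify $\psi>0$ on $[0,\overline{\alpha})$ by a direct case analysis on the sign of the leading coefficient, whereas the paper delegates this to the auxiliary Lemmas \ref{lmm:inverse} and \ref{lm:quadratic}.
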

\begin{proof}
(a) Define, for all $k\geq 0$,
\begin{align}
 \label{eq:muka}
  \mu_k =h_k-\alpha_k h_{k-1}+\gamma_k\norm{p^k-p^{k-1}}^2
\end{align}
where $h_k$ is as in \eqref{eq:def.hk} (for some $p\in \mathcal{S}$) and $\gamma_k$ is as in \eqref{eq:def.gamma}.
Using the assumption \eqref{eq:alpha_k} and Lemma \ref{lm:inv}(b), we obtain, for all $k\geq 0$,
\begin{align}
 \label{eq:010}
 \mu_{k+1}-\mu_k
&\leq h_{k+1}-\alpha_{k}h_k+ \gamma_{k+1}\norm{p^{k+1}-p^k}^2-h_k+\alpha_kh_{k-1}-\gamma_k\norm{p^k-
 p^{k-1}}^2 \quad \text{[by \eqref{eq:alpha_k}]}\nonumber\\
&= h_{k+1}-h_k-\alpha_k(h_k-h_{k-1})+\gamma_{k+1}\norm{p^{k+1}-p^k}^2-\gamma_k\norm{p^k-p^{k-1}}^2 \nonumber \\
&\leq \left[-\left(2-\overline{\beta}\right)\overline{\beta}^{\,-1}(1-\alpha_k)+ \gamma_{k+1}\right]\norm{p^{k+1}-p^k}^2 \qquad\qquad\quad\text{[by Lemma \ref{lm:inv}(b)]}\nonumber\\
&\leq \left[-\left(2-\overline{\beta}\right)\overline{\beta}^{\,-1}(1-\alpha_{k+1})+\gamma_{k+1}\right]\norm{p^{k+1}-p^k}^2 \quad\qquad\qquad\qquad\qquad \text{[by \eqref{eq:alpha_k}]}\nonumber\\
&=-q(\alpha_{k+1})\norm{p^{k+1}-p^k}^2 \quad\qquad\qquad\qquad\qquad\qquad\qquad\qquad\quad
\text{[by \eqref{eq:def.gamma} and \eqref{eq:q(alpha)}]}
\end{align}
where
 \begin{align}\label{eq:q(alpha)}
 q(\nu):=2\left(\overline{\beta}^{\,-1}-1\right)\nu^2-\left(4\overline{\beta}^{\,-1}-1\right)\nu+2\overline{\beta}^{\,-1}-1,\quad \nu\in \mathbb{R}.
 \end{align}
Next we will show that $q(\alpha_{k+1})$ admits an uniform lower bound. To this end, note first that \eqref{eq:beta(alpha)} and Lemma \ref{lmm:inverse} below yield
\[
\overline{\alpha}=\dfrac{2(2-\overline{\beta})}{4-\overline{\beta}+\sqrt{16\overline{\beta}-7\overline{\beta}^2}},
\]
which in turn combined with Lemma \ref{lm:quadratic} below implies that $q(\overline{\alpha})=0$ and $q(\cdot)$ is decreasing in $[0,\overline{\alpha}]$. Thus, in view of \eqref{eq:alpha_k},
 we obtain
\[
q(\alpha_{k+1})\geq q(\alpha)>q(\overline{\alpha})=0
\]
and so, in view of \eqref{eq:010}, it follows that
\begin{align}
  \label{eq:2351}
 \norm{p^{k+1}-p^k}^2\leq \dfrac{1}{q(\alpha)}(\mu_k - \mu_{k+1})\qquad \forall k\geq 0.
\end{align}
Hence, for all $k\geq 0$,
\begin{align}
 \label{eq:2348}
\sum_{j=0}^k\,\norm{p^{j+1}-p^j}^2&\leq \dfrac{1}{q(\alpha)}(\mu_0 - \mu_{k+1})\nonumber\\
      &\leq \dfrac{1}{q(\alpha)}(\mu_0 + \alpha h_k)
\end{align}
where in the second inequality above we also used the fact that $\mu_{k+1}\geq -\alpha h_k$ (in view of \eqref{eq:muka}
and \eqref{eq:alpha_k}).
Therefore, to finish the proof of (a) it is enough to find an upper bound on $h_k$ and use \eqref{eq:2348}.
To this end, note that from \eqref{eq:2351} and \eqref{eq:alpha_k} we have, for all $k\geq -1$,
\begin{align*}
\mu_0\geq  \mu_1\geq \ldots\geq \mu_{k+1}&=h_{k+1}-\alpha_{k+1}h_{k}+
\gamma_{k+1}\norm{p^{k+1}-p^{k}}^2\\
   &\geq h_{k+1}-\alpha  h_{k}
\end{align*}
and so, for all $k\geq -1$,
\begin{align*}
 h_{k+1}&\leq \alpha^{k+1}h_0 + \left(\sum_{i=0}^k\,\alpha^i\right)\mu_0\\
     &\leq h_0 + \dfrac{\mu_0}{1-\alpha}
\end{align*}
where in the second inequality we also used the fact -- from \eqref{eq:muka} -- that $\mu_0=(1-\alpha_0)h_0\geq 0$.
(b) The result follows trivially from (a), the fact that $\alpha_k\leq 1$ for all $k\geq 0$ and Theorem \ref{lm:conv.proj}(b). 
\end{proof}

 \begin{figure}
    \centering
        \begin{tikzpicture}[scale=2] \centering
    \draw[->,line width = 0.50mm] (-0.2,0) -- (1.4,0) node[right] { $\overline{\alpha}$};
    \draw[->,line width = 0.50mm] (0,-0.2) -- (0,2.4) node[above] {$\overline{\beta}(\overline{\alpha})$};
    \draw[domain=0:1,smooth,variable=\x,red,line width = 0.50mm,scale=1] plot ({\x},{(2*(\x -1)^2)/((2*(\x -1)^2)+3*\x -1)});
    \draw[red] (0,2) circle (0.35mm);
    \draw[red] (1,0) circle (0.35mm);
    \draw[dashed] (0.3333,0) -- (0.3333,1);
     \draw[dashed] (0,1) -- (0.3333,1);
        \node[below left,black] at (0,0) {0};
        \node[below right ,black] at (1,0) {1};
        \node[above left,black] at (0,2) {2};
        \node[below ,black] at (0.3333,0) {$\frac{1}{3}$};
        \node[left,black] at (0,1) {1};
    \end{tikzpicture}
    \caption{{The relaxation parameter upper bound
    $\overline{\beta}(\overline{\alpha})$ from \eqref{eq:beta(alpha)} 
    as a function of inertial step upper bound $\overline{\alpha}>0$
    of \eqref{eq:alpha_k}. Note that $\overline \beta(1/3)=1$, while
    $\overline{\beta}(\overline{\alpha})>1$ whenever $\overline \alpha<1/3$.}}
    \label{fig02}
    \end{figure}
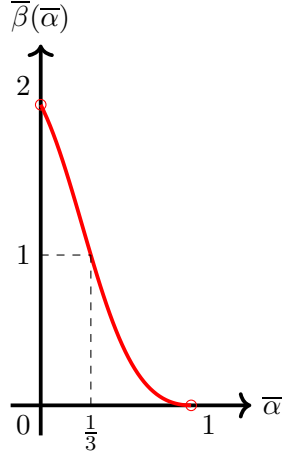

\mgap
\mgap
\noindent
{\bf Remarks}.
\begin{itemize}
\item[\mbox{(i)}] The proofs of Theorems \ref{lm:conv.proj} and \ref{thm:second.conv} have followed the same outline of the
proofs of Theorems 2.4 and 2.5 in \cite{alv.eck.ger.mel-preprint19}. On the other hand, we emphasize that
Algorithm \ref{inertial_projective} proposed in this work is more general that Algorithm 1 from \cite{alv.eck.ger.mel-preprint19}, since
the latter has been designed to solve inclusions with monotone operators.
\item[\mbox{(ii)}] We deduce from conditions \eqref{eq:alpha_k} and \eqref{eq:beta(alpha)} that overrelaxation effects
can be achieved in Algorithm \ref{inertial_projective} at the price of choosing the inertial parameter upper bound
$\overline{\alpha}$ strictly smaller than $1/3$ (see Figure \ref{fig02}). We also emphasize that the interplay between inertial and 
relaxation effects has also been investigated, e.g., in 
\cite{att.cab-con.jde18,att.pey-con.mp19,bot.cse.hen-ine.amc15,com.gla-qua.sjo17}.
\end{itemize}

%
\section{A relative-error inertial-relaxed inexact  projective splitting algorithm}
 \label{sec:ps}
In this section, we propose and study the asymptotic convergence of a relative-error inertial-relaxed inexact projective splitting algorithm 
(Algorithm \ref{alg:iPSM}). The main convergence results are stated in Theorems \ref{th:conv01} and \ref{th:conv02}.

We start by considering the monotone inclusion problem \eqref{eq:Pmip} (or, equivalently, \eqref{eq:Pmip02}), i.e., 
 the problem of finding $z\in \HH_0$ such that
\begin{align}
 \label{eq:Pmip1}
 0\in \sum_{i=1}^{n}G_i^*T_iG_i(z)
\end{align}
where $n\geq 2$ and Assumptions (A1)--(A3) of Section \ref{sec:int} are assumed to hold. 

Consider the extend solution set  (or generalized Kuhn-Tucker set) as in \eqref{eq:KTSi} for the problem \eqref{eq:Pmip1}, i.e.:
\begin{align}
\label{eq:KTS}
  \mathcal{S}:=\left\{(z,w_1,\ldots,w_{n-1})\in \boldsymbol{\HH}\;\;|\;\; w_i\in T_i(G_iz),\, i=1,\ldots,n-1,\, -\sum_{i=1}^{n-1}G^*_iw_i\in T_n(z) \right\}.
\end{align}
As we pointed out early, $z\in \HH_0$  is a solution of \eqref{eq:Pmip1} if and only if there exist $w_i\in \HH_i$ ($i=1,\ldots,n-1$)  such that $(z,w_1,\ldots, w_{n-1})\in \mathcal{S}$. 
We deduce from Assumption (A3) above that $\mathcal{S}$ is nonempty. Moreover, it follows form 
\cite[Lemma 3]{Johnst.Eckst.2018} that
$\mathcal{S}$ is closed and convex in $\boldsymbol{\HH}$ (endowed with inner product and norm as in \eqref{eq:def.inner}). 
As a consequence, one can apply the framework (Algorithm \ref{inertial_projective}) of Section \ref{sec:spm} for $\mathcal{S}$ as in \eqref{eq:KTS} and the Hilbert space 
$\boldsymbol{\HH}$ with the inner product and norm as in \eqref{eq:def.inner}. The resulting scheme is Algorithm \ref{alg:iPSM}, which, in particular, will be shown in Proposition \ref{prop:Q1} to be a special instance of Algorithm \ref{inertial_projective}.

Since Step 2 of Algorithm \ref{inertial_projective} demands the construction of an (nonconstant) affine function $\varphi_k$ such that
$\varphi_k(p)\leq 0$ for all $p\in \mathcal{S}$, next we discuss the construction of such $\varphi_k$ satisfying the latter inequality for
$\mathcal{S}$ as in \eqref{eq:KTS}.

Motivated by \eqref{eq:sep.i} and \eqref{eq:wni}, for  $y_i^k\in T_i(x_i^k)$ ($i=1,\dots, n$), we define $\varphi_k:\boldsymbol{\HH}\to \R$ by
\begin{align}
    \label{eq:sep.F02}
  \varphi_k(\underbrace{z,w_1,\ldots,w_{n-1}}_{p})=\sum_{i=1}^{n-1}\langle G_iz-x_i^k,y_i^k-w_i\rangle+
   \langle G_n z-x_n^k,y_n^k + \sum_{i=1}^{n-1}G_i^*w_i\rangle.
 \end{align}
We shall also use the fact, from \eqref{eq:sep.F02} and \eqref{eq:wni}, that
\begin{align}
 \label{eq:sep.F03}
 \varphi_k(p) = \sum_{i=1}^{n}\langle G_iz-x_i^k,y_i^k-w_i\rangle.
\end{align}
Note that the construction above depends on the computation of pairs $(x_i^k,y_i^k)$ in the graph of $T_i$, for each $i=1,\dots, n$, which
can be computed by inexact evaluation (with relative-error tolerance) of the resolvent $J_{T_i}=(T_i+I)^{-1}$ of $T_i$ (see Step 2 of Algorithm \ref{alg:iPSM}).

\mgap

Next lemma presents some properties of $\varphi_k$ which will be useful in this paper.

\begin{lemma}\emph{(\cite[Lemma 4]{Johnst.Eckst.2018})}
 \label{lmm:grad.sep}
 Let $\varphi_k(\cdot)$ and $\mathcal{S}$ be as in \eqref{eq:sep.F02} and \eqref{eq:KTS}, respectively.
 The following hold:
   \begin{enumerate}
     \item [\emph{(a)}] $\varphi_k$  is affine on $\boldsymbol{\HH}$.
     \item [\emph{(b)}]  $\varphi_k(p)\leq 0$ for all $p\in \mathcal{S}$.
     \item [\emph{(c)}]  The gradient of $\varphi_k$  with respect to the inner product $\inner{\cdot}{\cdot}_{\gamma}$ as in \eqref{eq:def.inner} is
     \begin{align}\label{eq:grad.phi_k}
       \nabla\varphi_k=
              \left(\dfrac{1}{\gamma}\left(\sum_{i=1}^{n-1}G_i^*y_i^k+y_n^k\right),x_1^k-G_1x_n^k,
              \ldots,x_{n-1}^k-G_{n-1}x_{n}^k\right).
     \end{align}
     \item [\emph{(d)}] If $\nabla \varphi_k=0$, then $(x_n^k,y_1^k,\ldots,y_{n-1}^k)\in \mathcal{S}$.
   \end{enumerate}
 \end{lemma}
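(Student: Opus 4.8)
The plan is to establish (a)–(d) by direct computation; the only place that requires any care is the bookkeeping of the bilinear terms, which is where Assumption (A2) enters.

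For \textbf{(a)}, I would expand $\varphi_k$ from \eqref{eq:sep.F02} into the products $\langle G_iz,y_i^k\rangle$, $-\langle G_iz,w_i\rangle$, $-\langle x_i^k,y_i^k\rangle$, $\langle x_i^k,w_i\rangle$ over $i=1,\dots,n-1$, together with the four terms coming from the $n$-th summand. The only genuinely quadratic contributions in $p=(z,w_1,\dots,w_{n-1})$ are $-\sum_{i=1}^{n-1}\langle G_iz,w_i\rangle$ and $\langle G_nz,\sum_{i=1}^{n-1}G_i^*w_i\rangle$; using the adjoint identity and $G_n=I$ (Assumption (A2)), the latter equals $\sum_{i=1}^{n-1}\langle G_iz,w_i\rangle$, so the two cancel and what remains is affine in $p$. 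For \textbf{(b)}, I would use \eqref{eq:sep.F03}, namely $\varphi_k(p)=\sum_{i=1}^n\langle G_iz-x_i^k,y_i^k-w_i\rangle$ with $w_n=-\sum_{i=1}^{n-1}G_i^*w_i$ (that \eqref{eq:sep.F03} agrees with \eqref{eq:sep.F02} is a one-line substitution). For $p\in\mathcal{S}$, the definition \eqref{eq:KTS} gives $w_i\in T_i(G_iz)$ for $i=1,\dots,n-1$ and $w_n\in T_n(z)=T_n(G_nz)$, so $(G_iz,w_i)\in\Graph T_i$ for every $i$; since $(x_i^k,y_i^k)\in\Graph T_i$, monotonicity of $T_i$ gives $\langle G_iz-x_i^k,y_i^k-w_i\rangle\le 0$, and summing over $i$ yields $\varphi_k(p)\le 0$.

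For \textbf{(c)}, having established in (a) that $\varphi_k$ is affine, I would simply collect the part of $\varphi_k(p)$ linear in $p$: the $z$-linear part is $\langle z,\sum_{i=1}^{n-1}G_i^*y_i^k+y_n^k\rangle$ (after using $G_n^*=I$), and the $w_j$-linear part is $\langle x_j^k-G_jx_n^k,w_j\rangle$ for $j=1,\dots,n-1$, the $-G_jx_n^k$ piece coming from the term $-\langle x_n^k,\sum_j G_j^*w_j\rangle$. Matching this against $\langle\nabla\varphi_k,p\rangle_\gamma=\gamma\langle g_0,z\rangle+\sum_{j=1}^{n-1}\langle g_j,w_j\rangle$ forces $g_0=\frac1\gamma\bigl(\sum_{i=1}^{n-1}G_i^*y_i^k+y_n^k\bigr)$ and $g_j=x_j^k-G_jx_n^k$, which is precisely \eqref{eq:grad.phi_k}. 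For \textbf{(d)}, if $\nabla\varphi_k=0$ then the two blocks of \eqref{eq:grad.phi_k} give $y_n^k=-\sum_{i=1}^{n-1}G_i^*y_i^k$ and $x_j^k=G_jx_n^k$ for $j=1,\dots,n-1$. Since $y_i^k\in T_i(x_i^k)$, the second set of relations gives $y_i^k\in T_i(G_ix_n^k)$ for $i=1,\dots,n-1$, while $y_n^k\in T_n(x_n^k)$ together with the first relation gives $-\sum_{i=1}^{n-1}G_i^*y_i^k\in T_n(x_n^k)$; comparing with \eqref{eq:KTS} (with $z=x_n^k$, $w_i=y_i^k$) shows $(x_n^k,y_1^k,\dots,y_{n-1}^k)\in\mathcal{S}$.

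The main obstacle, such as it is, is purely organizational: keeping the adjoints and the role of Assumption (A2) straight when showing the quadratic terms cancel in (a) and when identifying the $z$-component of the gradient in (c). There is no conceptual difficulty here, and indeed the statement is exactly \cite[Lemma 4]{Johnst.Eckst.2018}, so one could alternatively cite it directly.
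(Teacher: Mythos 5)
Your proof is correct in all four parts: the cancellation of the bilinear terms $-\sum_{i=1}^{n-1}\langle G_iz,w_i\rangle$ and $\langle G_nz,\sum_{i=1}^{n-1}G_i^*w_i\rangle$ via $G_n=I$ is exactly what makes $\varphi_k$ affine, the monotonicity argument for (b) via \eqref{eq:sep.F03} is right, and the identification of the gradient against $\inner{\cdot}{\cdot}_\gamma$ (with the factor $\gamma$ correctly absorbed into the $z$-block) and the read-off of (d) from \eqref{eq:KTS} are both sound. The paper itself does not prove this lemma but simply cites \cite[Lemma 4]{Johnst.Eckst.2018}; your direct computation is the standard argument behind that citation, so there is nothing to flag.
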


\mgap

As a direct consequence of Lemma \ref{lmm:grad.sep}(c) and \eqref{eq:def.inner}, we have
\begin{align}
 \label{eq:norm.grad.pi_k}
  \norm{\nabla\varphi_k}_{\gamma}^2=
  \gamma^{-1}\left\|\sum_{i=1}^{n-1}G_i^*y_i^k+y_n^k\right\|^2+\sum_{i=1}^{n-1}\norm{x_i^k-G_ix_{n}^k}^2.
\end{align}

 \mgap

Next we present the main algorithm of this paper. As we mentioned before, it consists of a relative-error inertial-relaxed inexact projective splitting method for solving \eqref{eq:Pmip1}.

\mgap

\mgap
\noindent
\fbox{
\addtolength{\linewidth}{-2\fboxsep}%
\addtolength{\linewidth}{-2\fboxrule}%
\begin{minipage}{\linewidth}
\begin{algorithm}
\label{alg:iPSM}
{\bf  A relative-error inertial-relaxed inexact projective splitting algorithm for solving \eqref{eq:Pmip1}}
\end{algorithm}
\begin{itemize}
\item[{\bf(0)}] Let $(z^{-1},w_1^{-1},\ldots,w_{n-1}^{-1})=(z^0,w_1^{0},\ldots, w_{n-1}^{0}) \in \boldsymbol{\HH}$, 
$0\leq \alpha,\sigma<1$, $0<\underline{\beta}\leq \overline{\beta}< 2$ and $\gamma >0$ be given; let $k\leftarrow0$.
\item [{\bf(1)}]  Choose $\alpha_k\in [0,\alpha]$ and let
\begin{align}
\label{eq:ext.022}
 &\widehat z^{\,k}=z^k+\alpha_k(z^k-z^{k-1}),\\
 \label{eq:ext.02}
 &\widehat w_i^{\,k}= w_i^{k}+\alpha_k(w_i^k-w_i^{k-1}),\quad i=1,\ldots,n-1,\\
 \label{eq:ext.023}
 &\widehat w_n^{\,k}=-\sum_{i=1}^{n-1}G_i^*\widehat w_i^{\,k}.
\end{align}
\item [{\bf(2)}]  Choose scalars $\rho_i^k>0$ and compute $(x_i^k,y_i^k)$ ($i=1,\ldots,n$) satisfying
\begin{align}
\left\{
       \begin{array}{ll}
			 \label{eq:proxT_i}
	       y_i^k\in T_i(x_i^k),\quad \rho_i^k y_i^k + x_i^k = G_i\widehat z^{\,k}+\rho_i^k \widehat w_i^{\,k}+e_i^k,\\[5mm]
                 \norm{e_i^k}^2\leq \sigma^2\left(\norm{G_i\widehat z^k-x_i^k}^2+\norm{\rho_i^k(\widehat w_i^k-y_i^k)}^2\right).
        \end{array}
        \right.
\end{align}
%
%
%
\item [{\bf(3)}]
If $x_i^k=G_i x_n^k$ ($i=1,\ldots,n-1$) and $\displaystyle \sum_{i=1}^{n-1}G_i^*y_i^k +y_n^k=0$, then STOP.
Otherwise, define
\begin{align}
  \label{eq:sep.F}
 &\varphi_k (\widehat p^{\,k}) =\sum_{i=1}^{n-1}\langle G_i\widehat z^k-x_i^k,y_i^k - \widehat w_i^k\rangle+
  \langle G_n \widehat z^k-x_n^k,y_n^k + \sum_{i=1}^{n-1}G_i^*\widehat w^k_i\rangle,\\
\label{eq:theta_k}
&\theta_k =\dfrac{\max\{0, \varphi_k(\widehat p^{\,k}) \}}{\gamma^{-1}\norm{\sum_{i=1}^{n-1}G_i^*y_i^k+y_n^k}^2+  \sum_{i=1}^{n-1}\norm{x_i^k-G_i x_n^k}^2}.
\end{align}
\item[{\bf(4)}] Choose some relaxation parameter $\beta_k\in [\underline{\beta},\overline{\beta}]$ and define
\begin{align}
\label{eq:ext.proj02}
&z^{k+1}=\widehat z^{\,k}-\gamma^{-1}\beta_k\theta_k\left(\sum_{i=1}^{n-1}G_i^*y_i^k+y_n^k \right),\\
&w_i^{k+1}=\widehat w_i^{\,k}-\beta_k\theta_k\left(x_i^k-G_i x_n^k\right), \quad i=1,\ldots,n-1.\label{eq:ext.proj002}
\end{align}
\item[{\bf(5)}] Let $k\leftarrow k+1$ and go to step 1.
\end{itemize}
\end{minipage}
} 

\mgap
\mgap
\mgap
\mgap

\noindent
{\bf Remarks.}

\begin{itemize}
 \item[\mbox{(i)}] Similarly to Algorithm \ref{inertial_projective} of Section \ref{sec:spm}, Algorithm \ref{alg:iPSM} also promotes
 inertial and relaxation effects, controlled by the parameters $\alpha_k$ and $\beta_k$, respectively. The inertial (extrapolation) step
 is performed in \eqref{eq:ext.022} and \eqref{eq:ext.02}, while the relaxed projective step is given in \eqref{eq:ext.proj02} and
 \eqref{eq:ext.proj002} (in the context of Algorithm \ref{inertial_projective}, see Figure \ref{fig:arrows} of Section \ref{sec:spm}). Conditions on the choice of the upper bounds $\alpha$ and $\overline{\beta}$, as well as on the sequence of extrapolation parameters
 $\{\alpha_k\}$, to guarantee the convergence of Algorithm \ref{alg:iPSM} will be given in Theorem \ref{th:conv02}. 
\item[\mbox{(ii)}] Direct substitution of \eqref{eq:ext.02} into \eqref{eq:ext.023} gives that, similarly to
$\widehat w_i^k$ for $i=1,\dots, n-1$, $\widehat w_n^k$ also satisfies
\begin{align}
  \widehat w_n^k = w_n^k + \alpha_k(w_n^k - w_n^{k-1}),
\end{align}
where
 \begin{align}
\label{eq:w_nk}
 w_n^k := -\sum_{i=1}^{n-1}G_i^* w_i^k,\qquad \forall k\geq 0.
\end{align}
\item[\mbox{(iii)}] The computation of $(x_i^k,y_i^k)$ in \eqref{eq:proxT_i} can be performed inexactly within a relative-error tolerance 
controlled by the parameter $\sigma\in [0,1)$. In practice, the error condition in \eqref{eq:proxT_i} is used as a stopping-criterion for
some computational procedure (e.g., the conjugate gradient algorithm) applied to (inexactly) solving the related inclusion (for $i=1,\dots, n$)
\begin{align*}
 0\in \rho_i^k T_i(x) + x- (G_i\widehat z^k +\rho_i^k \widehat w_i^k)
\end{align*}
until the error-condition in \eqref{eq:proxT_i} is satisfied for the first time. Note also that $(x_i^k,y_i^k)$ is given explicitly by
$x_i^k=J_{\rho_i^k T_i}(G_i\widehat z^k +\rho_i^k \widehat w_i^k)$ and 
$y_i^k = \frac{G_i\widehat z^k - x_i^k}{\rho_i^k}+\widehat w_i^k$ whenever the resolvent $J_{\rho_i^k T_i} = (\rho_i^k T_i +I)^{-1}$ of $T_i$ is assumed to be easily computed and $\sigma=0$ in \eqref{eq:proxT_i}.
For the particular case of the minimization problem \eqref{eq:opt}, the computation of $(x_i^k,y_i^k)$ reduces to the (inexact) computation of the proximity operator $\mbox{prox}_{\rho_i^k f_i}$, i.e., in this case
\begin{align}
  \label{eq:prox_fi}
 x_i^k \approx \mbox{arg}\min_{z\in \HH_0}\,\left\{f_i(z)+\dfrac{1}{2\rho_i^k}\norm{z-(G_i\widehat z^k+\rho_i^k\widehat w_i^k)}^2\right\}.
\end{align}
See also Section \ref{sec:ne} for an additional discussion in the context of LASSO problems.
%
%
%
\item[\mbox{(iv)}] It follows from Lemma \ref{lmm:grad.sep}, items (c) and (d), that $(x_n^k,y_1^k,\dots, y_{n-1}^k)$ belongs
to the extended solution set $\mathcal{S}$ whenever Algorithm \ref{alg:iPSM} stops at Step 3. In particular, in this case, $x_n^k$ is a solution of
\eqref{eq:Pmip1}. 

\emph{
\begin{center}
Motivated by Remark \emph{(iv)} above, from now one in this paper we assume that \emph{Algorithm \ref{alg:iPSM}} generates infinite sequences, i.e.,
we assume
that it never stops at \emph{Step 3}.
\end{center}
}

\mgap

\item[\mbox{(v)}] We also emphasize that if $\alpha_k\equiv 0$ in Algorithm \ref{alg:iPSM}, then it reduces to the projective splitting
algorithm (or some of its variants) originated in \cite{eck.sva-gen.sjco09} and later developed in different directions
(see, e.g., 
\cite{
combettes2015BestAppr,
alot.combet.shah.2014,
combettes2020warped,
combettes2022saddle,
combettes2018asynchronous,
combettes2016Solving,
eckstein2019rates,
Johnst.Eckst.2019,
Johnst.Eckst.2018,
jonhst.eckst.siam2019}). 
The advantages and flexibility of projective splitting algorithms (beyond inertial effects) when compared to other proximal-splitting strategies are also extensively discussed in the latter references.
\end{itemize}

\mgap

Next we show that Algorithm \ref{alg:iPSM} (under the assumption that it never stops at Step 3; see Remark (iv) above) is a special instance of Algorithm \ref{inertial_projective} for finding a point in $\mathcal{S}$
as in \eqref{eq:KTS} in the Hilbert space $\boldsymbol{\HH}$ endowed with the inner product and norm as in \eqref{eq:def.inner}.

\mgap

 %
 %
%
%
%
%

\begin{proposition}
\label{prop:Q1}
 Assume that  \emph{Algorithm \ref{alg:iPSM}} does not stop at \emph{Step 3}, let
$\{z^k\}$, $\{w_1^k\}, \dots, \{w_{n-1}^k\}$  be generated by \emph{Algorithm \ref{alg:iPSM}}, let $\{\varphi_k\}$ be as
in \eqref{eq:sep.F02} and  define
\begin{align}
 \label{eq:def.pk2}
 p^k = (z^k,w_1^k,\dots, w_{n-1}^k)\qquad \forall k\geq -1.
\end{align}
Then the following hold:
\begin{itemize}
\item[\emph{(a)}] For all $k\geq 0$,
\[
  \nabla \varphi_k \neq 0\;\;\mbox{and}\;\; \varphi_k(p)\leq 0\qquad \forall p\in \mathcal{S},
\]
where $\mathcal{S}$ is as in \eqref{eq:KTS}.
\item[\emph{(b)}] For all $k\geq 0$,
\begin{align}
 \label{eq:prop:Q101}
  p^{k+1}=\widehat p^{\,k} - \dfrac{\beta_k \max\{0,\varphi_k(\widehat p^{\,k})\}}{\norm{\nabla \varphi_k}_\gamma^2}\nabla \varphi_k\;\;\emph{and}\;\; \widehat p^k = (\widehat z^k,\widehat w_i^k,\dots, \widehat w_{n-1}^k),
 \end{align}
where $\widehat p^k$ is as in \eqref{eq:ext}. 
\end{itemize}
 As a consequence of \emph{(a)} and \emph{(b)} above, it follows that \emph{Algorithm \ref{alg:iPSM}} is a special instance of \emph{Algorithm \ref{inertial_projective}} for finding a point in the
extended solution set $\mathcal{S}$ as in \eqref{eq:KTS}.
 \end{proposition}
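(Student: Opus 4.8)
The plan is to verify the three requirements in the definition of Algorithm~\ref{inertial_projective} one at a time, matching each piece of Algorithm~\ref{alg:iPSM} against the abstract scheme. First I would settle part~(a): $\nabla\varphi_k\neq 0$ follows because we have assumed Algorithm~\ref{alg:iPSM} does not stop at Step~3, i.e., it is \emph{not} the case that $x_i^k = G_i x_n^k$ for all $i=1,\dots,n-1$ and $\sum_{i=1}^{n-1}G_i^*y_i^k + y_n^k = 0$ simultaneously; by the explicit formula for $\nabla\varphi_k$ in Lemma~\ref{lmm:grad.sep}(c) (equivalently, by the expression for $\|\nabla\varphi_k\|_\gamma^2$ in \eqref{eq:norm.grad.pi_k}), this is exactly the statement that $\nabla\varphi_k\neq 0$. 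The inequality $\varphi_k(p)\leq 0$ for all $p\in\mathcal{S}$ is Lemma~\ref{lmm:grad.sep}(b) verbatim, so (a) is essentially a bookkeeping step.

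For part~(b), the key observation is the second identity: I would first check that $\widehat p^k = (\widehat z^k, \widehat w_1^k,\dots,\widehat w_{n-1}^k)$ genuinely coincides with the extrapolated point \eqref{eq:ext} of Algorithm~\ref{inertial_projective} applied to $p^k = (z^k,w_1^k,\dots,w_{n-1}^k)$. This is immediate from \eqref{eq:ext.022} and \eqref{eq:ext.02}, which are componentwise exactly $\widehat p^k = p^k + \alpha_k(p^k - p^{k-1})$ in the space $\boldsymbol{\HH}$; note the auxiliary variable $\widehat w_n^k$ in \eqref{eq:ext.023} is not part of the point in $\boldsymbol{\HH}$ but only a convenient abbreviation (cf.\ Remark~(ii) following Algorithm~\ref{alg:iPSM}). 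Next I would check that the value $\varphi_k(\widehat p^k)$ appearing in \eqref{eq:sep.F} agrees with evaluating the affine function \eqref{eq:sep.F02} at $\widehat p^k$; this is a direct substitution $z\mapsto \widehat z^k$, $w_i\mapsto \widehat w_i^k$ into \eqref{eq:sep.F02}. Then, comparing \eqref{eq:theta_k} with the gradient-norm formula \eqref{eq:norm.grad.pi_k}, I get $\theta_k = \max\{0,\varphi_k(\widehat p^k)\}/\|\nabla\varphi_k\|_\gamma^2$, so the update rule \eqref{eq:ext.proj} of Algorithm~\ref{inertial_projective} reads
\[
p^{k+1} = \widehat p^k - \beta_k\theta_k\,\nabla\varphi_k.
\]
Writing this out componentwise using the formula for $\nabla\varphi_k$ from Lemma~\ref{lmm:grad.sep}(c) recovers precisely \eqref{eq:ext.proj02} (the $z$-component, which carries the factor $1/\gamma$) and \eqref{eq:ext.proj002} (the $w_i$-components), so $p^{k+1}$ defined in \eqref{eq:def.pk2} matches the abstract projection step.

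The only genuinely nonroutine point — and the place I would be most careful — is the bookkeeping around the $\gamma$-weighted inner product \eqref{eq:def.inner}: the gradient of $\varphi_k$ with respect to $\inner{\cdot}{\cdot}_\gamma$ carries the factor $1/\gamma$ in its first component (Lemma~\ref{lmm:grad.sep}(c)), the squared norm $\|\nabla\varphi_k\|_\gamma^2$ in \eqref{eq:norm.grad.pi_k} correspondingly has a $\gamma^{-1}$ in front of the first block, and these two factors must be tracked consistently so that the $z$-update \eqref{eq:ext.proj02} comes out with exactly one power of $\gamma^{-1}$ and the $w_i$-updates \eqref{eq:ext.proj002} come out with none. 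Once this is checked, parts (a) and (b) together say that the sequence $\{p^k\}$ from \eqref{eq:def.pk2}, together with the affine functions $\{\varphi_k\}$ and the parameters $\{\alpha_k\},\{\beta_k\}$, satisfies every requirement in Steps~(0)--(3) of Algorithm~\ref{inertial_projective} for the set $\mathcal{S}$ of \eqref{eq:KTS} in the Hilbert space $(\boldsymbol{\HH},\inner{\cdot}{\cdot}_\gamma)$; that is, Algorithm~\ref{alg:iPSM} is a special instance of Algorithm~\ref{inertial_projective}, which is the claimed consequence. I would close by remarking that the starting condition $p^0 = p^{-1}$ required in Step~(0) of Algorithm~\ref{inertial_projective} corresponds to the initialization $(z^{-1},w_1^{-1},\dots,w_{n-1}^{-1}) = (z^0,w_1^0,\dots,w_{n-1}^0)$ in Step~(0) of Algorithm~\ref{alg:iPSM}.
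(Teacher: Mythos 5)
Your proposal is correct and follows essentially the same route as the paper's own proof: part (a) via Lemma \ref{lmm:grad.sep}(b),(c) together with the no-stop assumption and the inclusions $y_i^k\in T_i(x_i^k)$ from \eqref{eq:proxT_i}, and part (b) by identifying \eqref{eq:ext.022}--\eqref{eq:ext.02} with the extrapolation step \eqref{eq:ext} and then matching \eqref{eq:theta_k}--\eqref{eq:ext.proj002} componentwise against \eqref{eq:ext.proj} using \eqref{eq:grad.phi_k} and \eqref{eq:norm.grad.pi_k}. Your additional care with the $\gamma$-weighting and the initialization $p^0=p^{-1}$ only makes explicit what the paper leaves implicit.
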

\begin{proof}
(a) The fact that $\nabla \varphi_k\neq 0$ follows from the assumption that Algorithm \ref{alg:iPSM} does not
stop at Step 3 and Lemma \ref{lmm:grad.sep}(c). 
Using now Lemma \ref{lmm:grad.sep}(b) and the inclusions in \eqref{eq:proxT_i}, we conclude that
$\varphi_k(p)\leq 0$ for all $p\in \mathcal{S}$.  

(b) The second identity in \eqref{eq:prop:Q101} follows from \eqref{eq:ext}, \eqref{eq:def.pk2}, \eqref{eq:ext.022} and \eqref{eq:ext.02}.
On the other hand, the first identity in \eqref{eq:prop:Q101} is a direct consequence of \eqref{eq:theta_k}--\eqref{eq:ext.proj002},
\eqref{eq:grad.phi_k}, \eqref{eq:norm.grad.pi_k} and the second identity in \eqref{eq:prop:Q101}. 

Finally, the last statement of the proposition is a consequence of items (a) and (b) as well as of Algorithm \ref{inertial_projective}'s definition.
\end{proof}

\mgap

Since Algorithm \ref{alg:iPSM} is a special instance of Algorithm \ref{inertial_projective} of Section \ref{sec:spm}, it follows from 
Theorems \ref{lm:conv.proj}(b) and \ref{thm:second.conv}(b), under
the assumptions \eqref{eq:sum.p} and \eqref{eq:alpha_k}--\eqref{eq:beta(alpha)}, respectively, that to prove the convergence of Algorithm \ref{alg:iPSM} it suffices to check that every weak cluster point of Algorithm \ref{alg:iPSM} belongs to $\mathcal{S}$ as in \eqref{eq:KTS}. This will be done in Proposition \ref{pr:varios}(e), but before we need the lemma below.

\begin{lemma}
 \label{lmm:seg01_new}
Consider the sequences evolved by \emph{Algorithm \ref{alg:iPSM}}, let 
$\widehat p^k = (\widehat z^k,\widehat w_i^k,\dots, \widehat w_{n-1}^k)$ and let $\widehat w_n^k$ be as in \eqref{eq:ext.023}.
Assume that, for $i=1,\dots, n$,
\begin{align}
  \label{eq:assum.rho}
  0<\underline{\rho}\leq \rho_i^k\leq \overline{\rho}<\infty\qquad \forall k\geq 0.
\end{align}
Then the following hold:
\begin{itemize}
\item[\emph{(a)}] For all $k\geq 0$,
\begin{align}\label{eq:max.phi_k}
  \varphi_k(\widehat p^k)\geq
  \dfrac{(1-\sigma^2)\min\left\{\overline{\rho}^{\,-1},\underline{\rho}\right\}}{2}
  \sum_{i=1}^n\,
  \left(\norm{G_i\widehat z^k-x_i^k}^2+\norm{\widehat w_i^k-y_i^k}^2\right)\geq 0.
  \end{align}
 \item[\emph{(b)}] There exists a constant $c>0$ such that, for all $k\geq 0$,
 \begin{align}
   \label{eq:smale}
   \dfrac{\varphi_k(\widehat p^k)^2}{c\norm{\nabla \varphi_k}_\gamma^2}\geq 
   \varphi_k(\widehat p^k)\geq c\,\norm{\nabla \varphi_k}^2_\gamma.
 \end{align}
\end{itemize}
\end{lemma}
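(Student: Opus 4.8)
The plan is to prove part (a) first by a direct computation on $\varphi_k(\widehat p^k)$, expressing it in terms of the residuals of the resolvent step \eqref{eq:proxT_i} and then absorbing the error term $e_i^k$ using its relative-error bound. Starting from \eqref{eq:sep.F03} applied at $\widehat p^k$, i.e., $\varphi_k(\widehat p^k)=\sum_{i=1}^n\inner{G_i\widehat z^k-x_i^k}{y_i^k-\widehat w_i^k}$, I would use the equation in \eqref{eq:proxT_i}, namely $x_i^k-G_i\widehat z^k=\rho_i^k(\widehat w_i^k-y_i^k)+e_i^k$, to substitute for one of the two factors. This turns each summand into $\rho_i^k\norm{\widehat w_i^k-y_i^k}^2$ plus a cross term of the form $\inner{e_i^k}{\widehat w_i^k-y_i^k}$ (or, symmetrically, I can split it evenly so that both $\norm{G_i\widehat z^k-x_i^k}^2/\rho_i^k$ and $\rho_i^k\norm{\widehat w_i^k-y_i^k}^2$ appear, each with coefficient $1/2$). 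Applying Cauchy--Schwarz and Young's inequality to the cross term, and then invoking the error bound $\norm{e_i^k}^2\leq\sigma^2(\norm{G_i\widehat z^k-x_i^k}^2+\norm{\rho_i^k(\widehat w_i^k-y_i^k)}^2)$ together with $\underline{\rho}\leq\rho_i^k\leq\overline{\rho}$, yields a lower bound of the claimed form, with the factor $(1-\sigma^2)\min\{\overline{\rho}^{-1},\overline{\rho}\}/2$. Nonnegativity is then immediate since $\sigma<1$.

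For part (b), the idea is that $\varphi_k(\widehat p^k)$ controls both $\norm{\nabla\varphi_k}_\gamma^2$ from above (up to a constant) and is itself controlled by $\norm{\nabla\varphi_k}_\gamma^2$ from below; the first and third inequalities in \eqref{eq:smale} then follow by chaining and the definition of $c$. The key observation is that the quantities appearing on the right-hand side of \eqref{eq:max.phi_k}, namely $\sum_i(\norm{G_i\widehat z^k-x_i^k}^2+\norm{\widehat w_i^k-y_i^k}^2)$, dominate (up to constants involving the operator norms $\norm{G_i}$) the expression $\gamma^{-1}\norm{\sum_{i=1}^{n-1}G_i^*y_i^k+y_n^k}^2+\sum_{i=1}^{n-1}\norm{x_i^k-G_ix_n^k}^2$, which by \eqref{eq:norm.grad.pi_k} is exactly $\norm{\nabla\varphi_k}_\gamma^2$. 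For the latter, I would use the equation in \eqref{eq:proxT_i} again to write $y_i^k=\widehat w_i^k+(G_i\widehat z^k-x_i^k-e_i^k)/\rho_i^k$, so that $\sum_{i=1}^{n-1}G_i^*y_i^k+y_n^k=\sum_{i=1}^{n-1}G_i^*\widehat w_i^k+\widehat w_n^k+(\text{residual terms})=0+(\text{residuals})$ by \eqref{eq:ext.023}; similarly $x_i^k-G_ix_n^k=(x_i^k-G_i\widehat z^k)-G_i(x_n^k-G_n\widehat z^k)$ since $G_n=I$. Bounding each residual by the corresponding $\norm{G_i\widehat z^k-x_i^k}$, $\norm{\widehat w_i^k-y_i^k}$ via the error condition, and using \eqref{eq:ineq.sum2} to handle the sums of norms squared, gives $\norm{\nabla\varphi_k}_\gamma^2\leq C\sum_i(\norm{G_i\widehat z^k-x_i^k}^2+\norm{\widehat w_i^k-y_i^k}^2)$ for a constant $C$ depending on $n$, $\gamma$, $\underline{\rho}$, $\overline{\rho}$ and $\max_i\norm{G_i}$. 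Combining with part (a) yields $\varphi_k(\widehat p^k)\geq c\norm{\nabla\varphi_k}_\gamma^2$ for a suitable $c>0$; the remaining inequality $\varphi_k(\widehat p^k)^2/(c\norm{\nabla\varphi_k}_\gamma^2)\geq\varphi_k(\widehat p^k)$ is just a rearrangement of this, valid whenever $\norm{\nabla\varphi_k}_\gamma\neq 0$ (which holds by Proposition \ref{prop:Q1}(a)).

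The main obstacle I anticipate is bookkeeping in part (b): carefully tracking how the residuals $G_i\widehat z^k-x_i^k$ and $\rho_i^k(\widehat w_i^k-y_i^k)$ enter the gradient expression after eliminating $y_i^k$, and making sure the operator norms $\norm{G_i}$ and the bounds $\underline{\rho},\overline{\rho}$ are combined into a single clean constant $C$ (and hence $c$) that is uniform in $k$. The relative-error term $e_i^k$ needs to be handled consistently in both parts so that the same $\sigma<1$ condition suffices, and one must be a little careful that the constant $c$ in the two displayed inequalities of \eqref{eq:smale} is literally the same constant — this forces a particular order: first establish the upper bound on $\norm{\nabla\varphi_k}_\gamma^2$, then choose $c$ as (a suitable fraction of) the reciprocal of the resulting constant times the constant from part (a). None of the individual estimates is deep; the work is in the uniformity and in not losing track of which Hilbert-space norm ($\gamma$-weighted or not) is in play at each stage.
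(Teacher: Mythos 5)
Your proposal is correct and follows essentially the same route as the paper: part (a) rests on the identity $\inner{a}{b}=\tfrac12\left(\|a+b\|^2-\|a\|^2-\|b\|^2\right)$ with $a=x_i^k-G_i\widehat z^{\,k}$, $b=\rho_i^k(y_i^k-\widehat w_i^k)$ and $a+b=e_i^k$ — this is exactly your ``even split'' (note that the one-sided substitution followed by Cauchy--Schwarz/Young would not produce the $\|G_i\widehat z^{\,k}-x_i^k\|^2$ term with a positive coefficient, so the symmetric version is the one you must use) — and part (b) rests on the cancellation $\sum_{i=1}^{n-1}G_i^*\widehat w_i^k+\widehat w_n^k=0$ from \eqref{eq:ext.023}, the decomposition $x_i^k-G_ix_n^k=(x_i^k-G_i\widehat z^{\,k})+G_i(\widehat z^{\,k}-x_n^k)$ (using $G_n=I$), and \eqref{eq:ineq.sum2}, combined with part (a). The only cosmetic difference is in (b): the paper bounds $\left\|\sum_{i=1}^{n}G_i^*y_i^k\right\|^2$ directly by writing $\sum_{i=1}^{n}G_i^*y_i^k=-\sum_{i=1}^{n}G_i^*(\widehat w_i^k-y_i^k)$ and applying \eqref{eq:ineq.sum2}, rather than re-entering through the resolvent equation and the errors $e_i^k$ as you suggest, which avoids the extra dependence of the constant on $\underline{\rho}$ and $\sigma$.
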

\begin{proof}
(a) Using the identity $\inner{a}{b}=(1/2)\left(\norm{a+b}^2-\norm{a}^2-\norm{b}^2\right)$ 
with $a=x_i^k-G_i\widehat z^k$ and $b=\rho_i^k(y_i^k-\widehat w_i^k)$, and some algebraic manipulations, 
we obtain, for $i=1,\dots, n$,
\begin{align*}
 \inner{x_i^k-G_i\widehat z^k}{\rho_i^k(y_i^k-\widehat w_i^k)} &
      = \dfrac{1}{2}\left(\norm{\underbrace{x_i^k-G_i\widehat z^k+\rho_i^k(y_i^k-\widehat w_i^k)}_{= e^k_i}}^2
       -\norm{G_i\widehat z^k-x_i^k}^2-\norm{\rho_i^k(\widehat w_i^k-y_i^k)}^2\right)\\
     & \leq \dfrac{-(1-\sigma^2)}{2}\left(\norm{G_i\widehat z^k-x_i^k}^2+\norm{\rho_i^k(\widehat w_i^k-y_i^k)}^2\right),
\end{align*}
where we also used the error condition in \eqref{eq:proxT_i}. Note now that the desired result follows by dividing the latter
inequality by $-\rho_i^k$ and by using \eqref{eq:sep.F03} and assumption \eqref{eq:assum.rho}.

(b) First note that using the property \eqref{eq:ineq.sum2}, \eqref{eq:ext.023} and the assumption that $G_n=I$, we obtain
\begin{align}
  \label{eq:0946}
 \left\|\sum_{i=1}^{n}G_i^*y_i^k\right\|^2 = \left\|\sum_{i=1}^{n}G_i^*(\widehat w_i^k-y_i^k)\right\|^2
 \leq n\left(\max_{i=1,\dots, n}\,\norm{G_i^*}^2\right)\,\sum_{i=1}^n\norm{\widehat w_i^k-y_i^k}^2.
\end{align}
On the other hand, using the inequality $\norm{a+b}^2\leq 2(\norm{a}^2+\norm{b}^2)$, (again) the fact that $G_n=I$
and some algebraic manipulations, we find
\begin{align}
\nonumber
 \sum_{i=1}^{n-1}\norm{x_i^k-G_i x_n^k}^2 &= \sum_{i=1}^{n-1}\norm{x_i^k - G_i\widehat z^k + G_i(\widehat z^k-x_n^k)}^2\\
 \nonumber
     &\leq 2\sum_{i=1}^{n-1}\,\left(\norm{G_i\widehat z^k-x_i^k}^2+\norm{G_i(\widehat z^k-x_n^k)}^2\right)\\
     \nonumber
     &\leq 2\left(\sum_{i=1}^{n-1}\,\norm{G_i\widehat z^k-x_i^k}^2+(n-1)\max_{i=1,\dots, n-1}\{\norm{G_i}^2\}
      \norm{\widehat z^k-x_n^k}^2\right)\\
      \label{eq:avila}
      &\leq 2 \max\left\{1, (n-1)\max_{i=1,\dots, n-1}\{\norm{G_i}^2\}\right\}\,\sum_{i=1}^n\,\norm{G_i\widehat z^k-x_i^k}^2.
\end{align}
We know from \eqref{eq:norm.grad.pi_k} (and the fact that $G_n=I$) that
\begin{align*}
\norm{\nabla\varphi_k}_{\gamma}^2&=
  \gamma^{-1}\left\|\sum_{i=1}^{n}G_i^*y_i^k\right\|^2+\sum_{i=1}^{n-1}\norm{x_i^k-G_ix_{n}^k}^2,
\end{align*}
which combined with \eqref{eq:0946}, \eqref{eq:avila} and \eqref{eq:max.phi_k} yields the second inequality in \eqref{eq:smale}, for some constant $c>0$. To finish the proof, note that the first inequality in \eqref{eq:smale} is a direct consequence of the second one.
\end{proof}

\mgap
\mgap

\begin{proposition}
 \label{pr:varios}
 Consider the sequences evolved by \emph{Algorithm \ref{alg:iPSM}} and let $\{w_n^k\}$ and $\{p^k\}$ be as in \eqref{eq:w_nk}
 and \eqref{eq:def.pk2}, respectively. 
 %
%
 Assume that
 \begin{align}
  \label{eq:901}
   \sum_{k=0}^\infty\,\alpha_k\norm{p^k-p^{k-1}}_\gamma^2<\infty
 \end{align}
and, for $i=1,\dots, n$,
 \begin{align}
  \label{eq:assum.rho2}
  0<\underline{\rho}\leq \rho_i^k\leq \overline{\rho}<\infty\qquad \forall k\geq 0.
\end{align}
 Then,
 \begin{itemize}
 \item[\emph{(a)}] We have, $\varphi_k(\widehat p^k)\to 0$ and $\norm{\nabla \varphi_k}_\gamma \to 0$.
%
%
\item[\emph{(b)}] We have, $\sum_{i=1}^n\,G_i^* y_i^k\to 0$ and $x_i^k-G_i x_n^k\to 0$ \emph{(}$i=1,\dots, n-1$\emph{)}.

\item[\emph{(c)}] For each $i=1,\dots, n$, we have 
 $\norm{G_i\widehat z^k-x_i^k}\to 0$ and $\norm{\widehat w_i^k-y_i^k}\to 0$.
%
%
 \item[\emph{(d)}]  For each $i=1,\dots, n$, we have $\norm{G_i z^k-x_i^k}\to 0$ and $\norm{w_i^k-y_i^k}\to 0$.
 \item[\emph{(e)}] Every weak cluster point of $\{p^k\}$ belongs to $\mathcal{S}$, where $\mathcal{S}$ is as in \eqref{eq:KTS}.
 \end{itemize}
\end{proposition}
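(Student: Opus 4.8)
The plan is to establish items (a)--(e) in order, leveraging the fact that Algorithm \ref{alg:iPSM} is a special instance of Algorithm \ref{inertial_projective} (Proposition \ref{prop:Q1}) together with the two auxiliary bounds in Lemma \ref{lmm:seg01_new}. First I would prove (a). Since the summability hypothesis \eqref{eq:901} is exactly \eqref{eq:sum.p} in the ambient space $\boldsymbol{\HH}$ with the $\gamma$-inner product, Theorem \ref{lm:conv.proj}(c) applies and gives $\max\{0,\varphi_k(\widehat p^k)\}/\norm{\nabla\varphi_k}_\gamma\to 0$. By Lemma \ref{lmm:seg01_new}(a) we have $\varphi_k(\widehat p^k)\geq 0$, so the max is just $\varphi_k(\widehat p^k)$, and hence $\varphi_k(\widehat p^k)/\norm{\nabla\varphi_k}_\gamma\to 0$. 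Combining this with the chain of inequalities in \eqref{eq:smale}, namely $\varphi_k(\widehat p^k)^2/(c\norm{\nabla\varphi_k}_\gamma^2)\geq \varphi_k(\widehat p^k)\geq c\norm{\nabla\varphi_k}_\gamma^2$, squeezes both $\varphi_k(\widehat p^k)$ and $\norm{\nabla\varphi_k}_\gamma$ to $0$: from the first inequality, $\varphi_k(\widehat p^k)\leq (1/c)\,(\varphi_k(\widehat p^k)/\norm{\nabla\varphi_k}_\gamma)^2\to 0$, and then the last inequality forces $\norm{\nabla\varphi_k}_\gamma\to 0$.

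Item (b) is then immediate: by the explicit formula \eqref{eq:norm.grad.pi_k} (with $G_n=I$), $\norm{\nabla\varphi_k}_\gamma^2 = \gamma^{-1}\|\sum_{i=1}^n G_i^* y_i^k\|^2 + \sum_{i=1}^{n-1}\norm{x_i^k - G_i x_n^k}^2$, so each nonnegative summand goes to $0$, giving $\sum_{i=1}^n G_i^* y_i^k\to 0$ and $x_i^k - G_i x_n^k\to 0$ for $i=1,\dots,n-1$. Item (c) follows from (a) and Lemma \ref{lmm:seg01_new}(a): the lower bound \eqref{eq:max.phi_k} shows $\varphi_k(\widehat p^k)$ dominates a positive multiple of $\sum_{i=1}^n(\norm{G_i\widehat z^k - x_i^k}^2 + \norm{\widehat w_i^k - y_i^k}^2)$, so that full sum tends to $0$, hence so does each term. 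For item (d), I would pass from the extrapolated points $\widehat z^k,\widehat w_i^k$ back to $z^k, w_i^k$. By Theorem \ref{thm:second.conv}(a) — or, more precisely, by whatever summability is available — one has $\norm{p^k - p^{k-1}}_\gamma\to 0$; actually \eqref{eq:901} alone gives $\alpha_k\norm{p^k-p^{k-1}}_\gamma^2\to 0$, and since Algorithm \ref{alg:iPSM} is an instance of Algorithm \ref{inertial_projective}, Theorem \ref{lm:conv.proj}(a) gives boundedness, and the stronger conclusion $\norm{p^k - p^{k-1}}_\gamma\to 0$ will hold under the running parameter assumptions. From \eqref{eq:ext.022}--\eqref{eq:ext.02}, $\widehat z^k - z^k = \alpha_k(z^k - z^{k-1})$ and $\widehat w_i^k - w_i^k = \alpha_k(w_i^k - w_i^{k-1})$, all of which tend to $0$; combined with the triangle inequality and continuity of the bounded operators $G_i$, the convergences in (c) transfer to $\norm{G_i z^k - x_i^k}\to 0$ and $\norm{w_i^k - y_i^k}\to 0$.

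Item (e) is the substantive part. Let $p^{k_j}=(z^{k_j},w_1^{k_j},\dots,w_{n-1}^{k_j})\rightharpoonup \bar p=(\bar z,\bar w_1,\dots,\bar w_{n-1})$ weakly along a subsequence. From (d), $x_i^{k_j} = G_i z^{k_j} + o(1)\rightharpoonup G_i\bar z$ for each $i$ (so in particular $x_n^{k_j}\rightharpoonup \bar z$), and $y_i^{k_j} = w_i^{k_j} + o(1)\rightharpoonup \bar w_i$ for $i=1,\dots,n-1$, while $y_n^{k_j} = -\sum_{i=1}^{n-1}G_i^* y_i^{k_j} + (\sum_{i=1}^n G_i^* y_i^{k_j})\rightharpoonup -\sum_{i=1}^{n-1}G_i^*\bar w_i =: \bar w_n$ using (b). Now I would invoke the standard fact that the graph of a maximal monotone operator is sequentially closed in the weak$\times$strong (and, via a demiclosedness / Brezis-type argument, weak$\times$weak when paired against the resolvent structure) topology; the cleanest route here is to use the defining inclusion $y_i^{k_j}\in T_i(x_i^{k_j})$ together with the product-pair convergence and the ``$\limsup\langle x_i^{k_j}-x_i,y_i^{k_j}-v\rangle\leq 0$'' condition needed for maximal monotonicity — this limsup condition is exactly what the relative-error estimate plus (c)/(d) provide, since the pairs $(x_i^k,y_i^k)$ and $(G_i\widehat z^k,\widehat w_i^k)$ become asymptotically identified. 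Concluding $\bar w_i\in T_i(G_i\bar z)$ for $i=1,\dots,n-1$ and $\bar w_n = -\sum_{i=1}^{n-1}G_i^*\bar w_i\in T_n(\bar z)$ shows $\bar p\in\mathcal{S}$. The main obstacle is precisely this weak-closedness step for the graphs of the $T_i$: one must argue carefully that weak convergence of both components suffices, which is not true for general maximal monotone operators without an extra ingredient — here the extra ingredient is that the differences $x_i^k - G_i\widehat z^k$ and $y_i^k - \widehat w_i^k$ tend to zero strongly (item (c)), so that $\langle x_i^{k_j}-x_i^{k_l}, y_i^{k_j}-y_i^{k_l}\rangle$-type cross terms vanish and the monotonicity inequality can be closed; this is the analogue of the standard argument in projective splitting (e.g.\ as in \cite{Johnst.Eckst.2018,eck.sva-gen.sjco09}) and should be carried out in detail.
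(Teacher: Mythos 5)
Your treatment of items (a)--(d) matches the paper's proof essentially step for step: Theorem \ref{lm:conv.proj}(c) plus the nonnegativity of $\varphi_k(\widehat p^k)$ and the sandwich \eqref{eq:smale} give (a); the explicit formula \eqref{eq:norm.grad.pi_k} gives (b); the lower bound \eqref{eq:max.phi_k} gives (c); and the transfer from hatted to unhatted iterates in (d) is exactly the paper's argument, provided you drop the unnecessary (and, under the hypotheses of this proposition, unjustified) appeal to Theorem \ref{thm:second.conv}(a) and rely only on the fact that \eqref{eq:901} forces $\alpha_k\norm{p^k-p^{k-1}}_\gamma^2\to 0$, whence $\alpha_k^2\norm{z^k-z^{k-1}}^2\leq\alpha_k\norm{z^k-z^{k-1}}^2\to 0$ --- which you do also note.

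The genuine gap is in item (e). You correctly identify that the graph of a maximal monotone operator is not weakly$\times$weakly sequentially closed and that an extra ingredient is needed, but the ingredient you propose does not work: you claim the cross terms $\langle x_i^{k_j}-x_i^{k_l},\,y_i^{k_j}-y_i^{k_l}\rangle$ vanish because, by item (c), $x_i^k-G_i\widehat z^k\to 0$ and $y_i^k-\widehat w_i^k\to 0$ strongly. That identification only replaces $(x_i^k,y_i^k)$ by $(G_i\widehat z^k,\widehat w_i^k)$, and \emph{both} of those sequences still converge only weakly, so the pairing of the two remains uncontrolled; strong convergence of the \emph{differences} in (c) buys nothing for the inner product of the weak limits. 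The mechanism that actually closes the argument is the \emph{aggregated} strong convergence from item (b) --- $\sum_{i=1}^{n}G_i^*y_i^k\to 0$ and $x_i^k-G_ix_n^k\to 0$ --- exploited across all $i$ simultaneously: writing $A:=T_n$, $B:=T_1\times\cdots\times T_{n-1}$, $G:=(G_1,\dots,G_{n-1})$, one has $a^{k_j}+G^*b^{k_j}\to 0$ and $Gr^{k_j}-s^{k_j}\to 0$ strongly, and the sum $\langle r^{k_j},a^{k_j}\rangle+\langle s^{k_j},b^{k_j}\rangle$ can then be rewritten as a weakly bounded sequence paired against strongly null sequences. This is precisely the content of Lemma \ref{lm:comb} (Alotaibi--Combettes--Shahzad), which the paper invokes and which your sketch never uses; without it, or an equivalent hand-crafted computation, your limsup condition is not established. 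You should either cite that lemma or carry out the aggregated cancellation explicitly; the per-operator argument as you describe it would fail.
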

\begin{proof}
(a) Using the last statement in Proposition \ref{prop:Q1}, Theorem \ref{lm:conv.proj}(c) and the fact from \eqref{eq:max.phi_k} that
$\varphi_k(\widehat p^k)\geq 0$, we obtain
\begin{align*}
 \dfrac{\varphi_k(\widehat p^k)}{\norm{\nabla \varphi_k}_\gamma}\to 0,
\end{align*}
which after taking limit in \eqref{eq:smale} gives the desired result in item (a).

(b) This follows from the second limit in item (a) combined with \eqref{eq:norm.grad.pi_k} (and the fact that $G_n=I$).

(c) This follows from the first limit in item (a) and \eqref{eq:max.phi_k}.

(d) Using the triangle inequality, the identity \eqref{eq:ext.022}, \eqref{eq:def.pk2} and \eqref{eq:def.inner},
we find
\begin{align}
\nonumber
 \norm{G_iz^k-x_i^k}&\leq \norm{z^k-\widehat z^{\,k}}\norm{G_i}+\norm{G_i \widehat z^{\,k}-x_i^k}\\
 \nonumber
  & = \alpha_k\norm{z^k-z^{k-1}}\norm{G_i}+\norm{G_i\widehat z^{\,k}-x_i^k}\\
  \label{eq:905}
  &\leq \sqrt{\gamma^{-1}}\sqrt{\alpha_k}\,\norm{p^k-p^{k-1}}_\gamma\norm{G_i}+\norm{G_i\widehat z^{\,k}-x_i^k},
  \quad i=1,\dots, n,
\end{align}
where we also used the fact that $\alpha_k\leq \sqrt{\alpha_k}$ (because $0\leq \alpha_k<1$).
Using a similar reasoning, we also find
\begin{align}
\label{eq:TM01}
 \norm{y_i^k-w_i^k} \leq \sqrt{\alpha_k}\,\norm{p^k-p^{k-1}}_\gamma+\norm{y_i^k-\widehat w_i^k},\qquad i=1,\dots, n-1.
 \end{align}
Note also that, using \eqref{eq:ext.023}, \eqref{eq:ext.02}, \eqref{eq:w_nk}, the property \eqref{eq:ineq.sum2}, the fact that $\alpha_k^2\leq \alpha_k$ and \eqref{eq:def.inner}, we obtain
\begin{align}
 \nonumber
 \dfrac{1}{2}\norm{y_n^k-w_n^k}^2 &\leq \norm{\widehat w_n^k - w_n^k}^2+\norm{y_n^k- \widehat w_n^k}^2\\
 \nonumber
        & \leq (n-1)\max_{i=1,\dots, n-1}\{\norm{G_i^*}^2\}
        \left(\sum_{i=1}^{n-1}\,\norm{\widehat w_i^k-w_i^k}^2\right)+\norm{y_n^k- \widehat w_n^k}^2\\
        \nonumber
        & = (n-1)\max_{i=1,\dots, n-1}\{\norm{G_i^*}^2\}
        \left(\sum_{i=1}^{n-1}\,\alpha_k^2\norm{w_i^{k-1}-w_i^k}^2\right)+\norm{y_n^k- \widehat w_n^k}^2\\
        \nonumber
        & \leq (n-1)\max_{i=1,\dots, n-1}\{\norm{G_i^*}^2\}
        \left(\sum_{i=1}^{n-1}\,\alpha_k\norm{w_i^{k-1}-w_i^k}^2\right)+\norm{y_n^k- \widehat w_n^k}^2\\
        \label{eq:904}
        & \leq (n-1)\max_{i=1,\dots, n-1}\{\norm{G_i^*}^2\}
        \alpha_k\norm{p^k-p^{k-1}}^2_\gamma+\norm{y_n^k- \widehat w_n^k}^2.
\end{align}
To finish the proof of (d), combine \eqref{eq:905}--\eqref{eq:904} with item (c) and assumption \eqref{eq:901} 
(which, in particular, implies that $\alpha_k\norm{p^k-p^{k-1}}_\gamma^2\to 0$).

(e) Let $p^\infty:=(z^\infty, w_1^\infty, \cdots, w_{n-1}^\infty)\in \boldsymbol{\HH}$ be a weak cluster point of $\{p^k\}$ (by Proposition
\ref{prop:Q1} and Theorem \ref{lm:conv.proj}(a), we have that $\{p^k\}$ is bounded) and let
$\{p^{k_j}\}$ be a subsequence of $\{p^k\}$ such that $p^{k_j}\rightharpoonup p^\infty$, i.e.,
\begin{align}
 \label{eq:903}
  z^{k_j}\rightharpoonup z^\infty\;\;\mbox{and}\;\; w_i^{k_j}\rightharpoonup w_i^{\infty},\quad i=1,\dots, n-1.
\end{align}
Using item (d), \eqref{eq:903} and the fact that $G_n=I$ (see Assumption (A2)), we obtain
\begin{align}
  \label{eq:16103}
  x_n^{k_j}\rightharpoonup z^\infty\;\;\mbox{and}\;\; y_i^{k_j}\rightharpoonup w_i^{\infty},\quad i=1,\dots, n-1.
\end{align}
Now define the maximal monotone operators $A:\HH_0\rightrightarrows \HH_0$,
$B:\HH_1\times\cdots\times\HH_{n-1}\rightrightarrows \HH_1\times\cdots\times\HH_{n-1}$
and the bounded linear operator $G:\HH_0\to \HH_1\times\cdots\times\HH_{n-1}$ by
\begin{align}
\label{eq:161011}
 A:= T_n,\quad B:=T_1\times \cdots \times T_{n-1}\;\;\mbox{and}\;\; G:=(G_1,\dots, G_{n-1}).
\end{align}
Using the above definitions of $A$ and $B$ and the inclusions in \eqref{eq:proxT_i}, we have
\begin{align}
 \label{eq:1610}
 a^{k_j}\in A(r^{k_j})\;\;\mbox{and}\;\; b^{k_j}\in B(s^{k_j}),
\end{align}
where
\begin{align}
 \label{eq:16102}
 a^{k_j}:=y_n^{k_j},\quad r^{k_j}:=x_n^{k_j},
 \quad b^{k_j}:=(y_1^{k_j},\dots, y_{n-1}^{k_j})\;\;\mbox{and}\;\; s^{k_j}:=(x_1^{k_j},\dots, x_{n-1}^{k_j}).
\end{align}
Moreover, \eqref{eq:16102} and \eqref{eq:16103} yield 
\begin{align}
 \label{eq:16104}
  r^{k_j}\rightharpoonup r^\infty\;\;\mbox{and}\;\; b^{k_j}\rightharpoonup b^\infty,
\end{align}
where
\begin{align}
 \label{eq:16105}
  r^\infty:= z^\infty\;\;\mbox{and}\;\; b^\infty:=(w_1^\infty,\cdots, w_{n-1}^\infty).
\end{align}
Note now that using \eqref{eq:16102}, the fact that $G^*(w_1,\dots, w_{n-1})=\sum_{i=1}^{n-1}G_i^*w_i$,
for all $(w_1,\cdots, w_{n-1})\in \HH_1\times\cdots\times\HH_{n-1}$, the fact that $G_n=I$ and the first limit in item (b), we find
\begin{align}
\label{eq:16106}
 a^{k_j} + G^* b^{k_j}  = \sum_{i=1}^{n}G_i^*y_i^{k_j}\to 0.
%
\end{align}
Using now the second limit in item (b) combined with \eqref{eq:16102} and the definition of $G$ in \eqref{eq:161011}, we obtain
\begin{align}
\label{eq:16109}
 G r^{k_j} - s^{k_j} \to 0.
\end{align}
Using Lemma \ref{lm:comb} below combined with \eqref{eq:1610}, \eqref{eq:16104}, \eqref{eq:16106} and \eqref{eq:16109} we 
conclude that
\begin{align*}
  b^\infty\in B(Gr^\infty)\;\;\mbox{and}\;\; -G^*b^\infty \in A(r^\infty),
\end{align*}
which, in turn, combined with \eqref{eq:161011} and \eqref{eq:16105} implies that
\begin{align*}
 w_i^\infty \in T_i(G_iz^\infty),\;\; i=1,\cdots, n-1,\;\; -\sum_{i=1}^{n-1}G_i^* w_i^\infty \in T_n(z^\infty),
\end{align*}
which means exactly (see \eqref{eq:KTS}) that $p^\infty = (z^\infty, w_1^\infty, \cdots, w_{n-1}^\infty)\in \mathcal{S}$. Hence, 
we conclude that every weak cluster point of $\{p^k\}$ belongs to $\mathcal{S}$.
\end{proof}

\mgap

Next is the first result on the asymptotic convergence of Algorithm \ref{alg:iPSM}.

\begin{theorem}[First result on the convergence of Algorithm \ref{alg:iPSM}]
 \label{th:conv01}
 Consider the sequences evolved by \emph{Algorithm \ref{alg:iPSM}} and let $\{p^k\}$ be as in \eqref{eq:def.pk2}.
 Assume that conditions \eqref{eq:901} and \eqref{eq:assum.rho2} of \emph{Proposition \ref{pr:varios}} hold, i.e., assume that
 \begin{align}
   \label{eq:1423}
  \sum_{k=0}^\infty\,\alpha_k\norm{p^k-p^{k-1}}^2_\gamma<\infty
 \end{align}
and, for $i=1,\dots, n$,
\begin{align}
 \label{eq:161012}
 0<\underline{\rho}\leq \rho_i^k\leq \overline{\rho}<\infty\qquad \forall k\geq 0.
\end{align}
Then, there exists $(z^\infty, w_1^\infty, \cdots, w_{n-1}^\infty)\in \mathcal{S}$ such that
$z^k \rightharpoonup z^\infty$ and
$w_i^k\rightharpoonup w_i^\infty$, for $i=1,\dots, n-1$.
Furthermore, 
$x_i^k\rightharpoonup G_i z^\infty$ and
$y_i^k\rightharpoonup w_i^\infty$, for $i=1,\dots, n$, where $w_n^k$ is as in \eqref{eq:w_nk}.
\end{theorem}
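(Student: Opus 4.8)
The plan is to recognize that this theorem is an immediate consequence of the machinery already assembled, so the proof is essentially a bookkeeping exercise linking Proposition~\ref{prop:Q1}, Proposition~\ref{pr:varios}, and Theorem~\ref{lm:conv.proj}(b). First I would invoke Proposition~\ref{prop:Q1} to assert that Algorithm~\ref{alg:iPSM} is a special instance of Algorithm~\ref{inertial_projective} applied to the closed convex set $\mathcal{S}$ in \eqref{eq:KTS} inside the Hilbert space $\boldsymbol{\HH}$ with the $\gamma$-weighted inner product. Then, since assumption \eqref{eq:1423} is exactly the summability hypothesis \eqref{eq:sum.p} (equivalently \eqref{eq:901}), Theorem~\ref{lm:conv.proj}(a) applies and $\{p^k\}$ is bounded, hence has weak cluster points. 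The crucial input is Proposition~\ref{pr:varios}(e): under \eqref{eq:1423} and \eqref{eq:161012}, every weak cluster point of $\{p^k\}$ lies in $\mathcal{S}$. Feeding this into Theorem~\ref{lm:conv.proj}(b) gives that $\{p^k\}$ converges weakly to some $p^\infty\in\mathcal{S}$, which unpacks (via \eqref{eq:def.pk2} and \eqref{eq:def.inner}, noting weak convergence in $\boldsymbol{\HH}$ coordinatewise) to $z^k\rightharpoonup z^\infty$ and $w_i^k\rightharpoonup w_i^\infty$ for $i=1,\dots,n-1$.

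For the ``furthermore'' part concerning $x_i^k$ and $y_i^k$, I would use Proposition~\ref{pr:varios}(d), which states $\norm{G_iz^k - x_i^k}\to 0$ and $\norm{w_i^k - y_i^k}\to 0$ for the relevant indices. Combining $x_i^k = G_i z^k + (x_i^k - G_i z^k)$ with $G_i z^k \rightharpoonup G_i z^\infty$ (weak continuity of the bounded linear operator $G_i$) and the fact that adding a strongly-null sequence preserves weak limits, I get $x_i^k \rightharpoonup G_i z^\infty$; similarly $y_i^k \rightharpoonup w_i^\infty$ from $w_i^k\rightharpoonup w_i^\infty$ and $\norm{w_i^k - y_i^k}\to 0$, for $i=1,\dots,n-1$. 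For $i=n$, one uses $G_n = I$ (Assumption (A2)) so $x_n^k \rightharpoonup z^\infty = G_n z^\infty$, and for $y_n^k$ one invokes the definition $w_n^k = -\sum_{i=1}^{n-1}G_i^* w_i^k$ together with the weak convergence $w_i^k\rightharpoonup w_i^\infty$ (and continuity of $G_i^*$) to get $w_n^k \rightharpoonup w_n^\infty := -\sum_{i=1}^{n-1}G_i^* w_i^\infty$, then Proposition~\ref{pr:varios}(d) with $i=n$ gives $\norm{w_n^k - y_n^k}\to 0$, hence $y_n^k \rightharpoonup w_n^\infty$.

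I do not anticipate a genuine obstacle here; the real work has been front-loaded into Proposition~\ref{pr:varios} (especially part (e), which used the demiclosedness-type Lemma~\ref{lm:comb}) and into the convergence theorems for the abstract separator-projection scheme. The only minor care point is making sure that weak convergence in $(\boldsymbol{\HH}, \norm{\cdot}_\gamma)$ is equivalent to coordinatewise weak convergence in $\HH_0\times\cdots\times\HH_{n-1}$ — this holds because the $\gamma$-weighted norm is equivalent to the standard product norm (they induce the same topology, $\gamma>0$ being a fixed positive constant), so no subtlety arises. The proof is therefore a short assembly: cite Proposition~\ref{prop:Q1}, then Theorem~\ref{lm:conv.proj}(a) for boundedness, then Proposition~\ref{pr:varios}(e) to verify the cluster-point hypothesis, then Theorem~\ref{lm:conv.proj}(b) for weak convergence of $\{p^k\}$, and finally Proposition~\ref{pr:varios}(d) plus weak continuity of $G_i$, $G_i^*$ to transfer the conclusion to $\{x_i^k\}$ and $\{y_i^k\}$.
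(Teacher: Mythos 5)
Your proposal is correct and follows essentially the same route as the paper's own proof: cite Proposition \ref{prop:Q1} to place Algorithm \ref{alg:iPSM} within the abstract framework, combine Proposition \ref{pr:varios}(e) with Theorem \ref{lm:conv.proj}(b) to obtain weak convergence of $\{p^k\}$ to a point of $\mathcal{S}$, and then use Proposition \ref{pr:varios}(d) to transfer the weak limits to $\{x_i^k\}$ and $\{y_i^k\}$. The extra details you supply (weak continuity of $G_i$ and $G_i^*$, the $i=n$ case, and the equivalence of the $\gamma$-weighted and product topologies) are all sound and simply make explicit what the paper leaves implicit.
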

\begin{proof}
In view of Propositions \ref{prop:Q1} and \ref{pr:varios}(e) and Theorem \ref{lm:conv.proj}(b) one concludes that
that $\{p^k\}$ converges weakly to some $p^\infty:=(z^\infty, w_1^\infty, \cdots, w_{n-1}^\infty)$ in 
$\mathcal{S}$ as in \eqref{eq:KTS}. Using the definition of $p^k$ in \eqref{eq:def.pk2} one easily concludes
that $z^k \rightharpoonup z^\infty$ and $w_i^k\rightharpoonup w_i^\infty$, for $i=1,\dots, n-1$, which 
in turn combined with Proposition \ref{pr:varios}(d) implies that $x_i^k\rightharpoonup G_i z^\infty$ and
$y_i^k\rightharpoonup w_i^\infty$, for $i=1,\dots, n$.
\end{proof}

\mgap

Next theorem shows the convergence of Algorithm \ref{alg:iPSM} under certain assumptions on $\alpha$, $\overline{\beta}$ and the
sequence $\{\alpha_k\}$ (see the remarks below).

\begin{theorem}[Second result on the convergence of Algorithm \ref{alg:iPSM}]
 \label{th:conv02}
Consider the sequences evolved by \emph{Algorithm \ref{alg:iPSM}} and assume that
$\alpha\in [0,1)$, $\overline{\beta}\in (0,2)$  and $\{\alpha_k\}$ satisfy \emph{(}for some $\overline{\alpha}>0$\emph{)} the conditions
\eqref{eq:alpha_k} and \eqref{eq:beta(alpha)} of \emph{Theorem \ref{thm:second.conv}}, i.e.,
\begin{align}
      0\leq \alpha_k\leq \alpha_{k+1}\leq \alpha<\overline{\alpha}<1\qquad \forall k\geq 0
  \end{align}
 and
  \begin{align}
  \label{eq:beta_alpha_4}
    \overline{\beta}=\overline{\beta}(\overline{\alpha}):=\dfrac{2(\overline{\alpha}-1)^2}
    {2(\overline{\alpha}-1)^2+3\overline{\alpha}-1}.
  \end{align}
 Assume also that condition \eqref{eq:161012} holds, i.e., assume that, for $i=1,\dots, n$,
 \begin{align}
 0<\underline{\rho}\leq \rho_i^k\leq \overline{\rho}<\infty\qquad \forall k\geq 0.
\end{align}
 Then, the same conclusions of \emph{Theorem \ref{th:conv01}} hold, i.e.,  
 there exists $(z^\infty, w_1^\infty, \cdots, w_{n-1}^\infty)\in \mathcal{S}$ such that
$z^k \rightharpoonup z^\infty$ and
$w_i^k\rightharpoonup w_i^\infty$, for $i=1,\dots, n-1$.
Furthermore, 
$x_i^k\rightharpoonup G_i z^\infty$ and
$y_i^k\rightharpoonup w_i^\infty$, for $i=1,\dots, n$, where $w_n^k$ is as in \eqref{eq:w_nk}.
 \end{theorem}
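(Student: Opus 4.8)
The plan is to deduce Theorem \ref{th:conv02} from the already-established machinery, essentially by verifying that the hypotheses of Theorem \ref{th:conv01} are implied by the hypotheses stated here. The key observation is that Theorem \ref{th:conv01} requires the summability condition \eqref{eq:1423}, namely $\sum_k \alpha_k\norm{p^k-p^{k-1}}_\gamma^2<\infty$, whereas the present theorem instead imposes the monotonicity/upper-bound condition \eqref{eq:alpha_k} on $\{\alpha_k\}$ together with the coupling \eqref{eq:beta_alpha_4} between $\overline\alpha$ and $\overline\beta$. So the first and main step is to invoke Proposition \ref{prop:Q1}, which says Algorithm \ref{alg:iPSM} is a special instance of Algorithm \ref{inertial_projective} (applied to $\mathcal{S}$ as in \eqref{eq:KTS} in the Hilbert space $\boldsymbol{\HH}$ with norm $\norm{\cdot}_\gamma$), and then apply Theorem \ref{thm:second.conv}(a): under exactly the assumptions \eqref{eq:alpha_k} and \eqref{eq:beta(alpha)}, one gets the \emph{stronger} summability $\sum_k\norm{p^k-p^{k-1}}_\gamma^2<\infty$. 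Since $\alpha_k\le \alpha<1$, this immediately yields $\sum_k\alpha_k\norm{p^k-p^{k-1}}_\gamma^2<\infty$, i.e.\ condition \eqref{eq:1423}.

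With \eqref{eq:1423} in hand, and since the resolvent-parameter boundedness \eqref{eq:161012} is assumed directly in the present statement, all hypotheses of Theorem \ref{th:conv01} are met. I would then simply cite Theorem \ref{th:conv01} to conclude the existence of $(z^\infty,w_1^\infty,\dots,w_{n-1}^\infty)\in\mathcal{S}$ with $z^k\rightharpoonup z^\infty$, $w_i^k\rightharpoonup w_i^\infty$, and the corresponding weak limits $x_i^k\rightharpoonup G_iz^\infty$, $y_i^k\rightharpoonup w_i^\infty$. Alternatively, one can bypass Theorem \ref{th:conv01} and argue directly: from Proposition \ref{prop:Q1} plus Proposition \ref{pr:varios}(e) (whose hypothesis \eqref{eq:901} is the same summability just derived), every weak cluster point of $\{p^k\}$ lies in $\mathcal{S}$, so Theorem \ref{thm:second.conv}(b) gives weak convergence of $\{p^k\}$ to a point of $\mathcal{S}$; then Proposition \ref{pr:varios}(d) transfers this to the $x_i^k$ and $y_i^k$ sequences.

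Concretely, the steps in order are: (1) apply Proposition \ref{prop:Q1} to view Algorithm \ref{alg:iPSM} as an instance of Algorithm \ref{inertial_projective}; (2) apply Theorem \ref{thm:second.conv}(a) to obtain $\sum_k\norm{p^k-p^{k-1}}_\gamma^2<\infty$; (3) note $\alpha_k<1$ to get \eqref{eq:1423}; (4) observe \eqref{eq:161012} is assumed; (5) invoke Theorem \ref{th:conv01} verbatim for the conclusion. The ``hard part'' is genuinely minimal here — this theorem is a corollary-style wrap-up — but the one place to be careful is making sure the coupling condition \eqref{eq:beta_alpha_4} is what Theorem \ref{thm:second.conv} needs (it is, being identical to \eqref{eq:beta(alpha)}), and that the inner product/norm used throughout is consistently the $\gamma$-weighted one from \eqref{eq:def.inner}, so that the summability produced by Theorem \ref{thm:second.conv} is in the $\norm{\cdot}_\gamma$ norm, matching \eqref{eq:1423}. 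No new estimate is required.
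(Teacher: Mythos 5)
Your proposal is correct and follows essentially the same route as the paper: both reduce the statement to the general machinery via Proposition \ref{prop:Q1}, Theorem \ref{thm:second.conv} and Proposition \ref{pr:varios}. The only (cosmetic) difference is that you route the conclusion through Theorem \ref{th:conv01} after explicitly deriving its hypothesis \eqref{eq:1423} from the summability in Theorem \ref{thm:second.conv}(a) together with $\alpha_k\leq \alpha<1$, whereas the paper cites Theorem \ref{thm:second.conv}(b) and Proposition \ref{pr:varios}(e) directly and leaves that verification of \eqref{eq:901} implicit --- a step you rightly make explicit.
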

\begin{proof}
In view of Propositions \ref{prop:Q1} and \ref{pr:varios}(e) and Theorem \ref{thm:second.conv}(b) one concludes that
that $\{p^k\}$ converges weakly to some $p^\infty:=(z^\infty, w_1^\infty, \cdots, w_{n-1}^\infty)$ in 
$\mathcal{S}$ as in \eqref{eq:KTS}. The rest of the proof follows the same argument used in Theorem \ref{th:conv01}'s proof.
\end{proof}

\mgap

\noindent
{\bf Remarks.}

\begin{itemize}
 \item[\mbox{(i)}] We emphasize that the conditions on $\alpha$, $\overline{\beta}$ and on the sequence $\{\alpha_k\}$ above are exactly
 the same of Theorem \ref{thm:second.conv}, namely \eqref{eq:alpha_k} and \eqref{eq:beta(alpha)}. See also the second remark following
 Theorem \ref{thm:second.conv} and Figure \ref{fig02} for a discussion of the interplay between inertial and overrelaxation 
 parameters.
 \item[\mbox{(ii)}] Note that, since $(z^\infty, w_1^\infty, \cdots, w_{n-1}^\infty)\in \mathcal{S}$ in Theorem \ref{th:conv02}, it follows that the weak limit $z^\infty \in \HH_0$ is a solution of the monotone inclusion problem \eqref{eq:Pmip1}.
\end{itemize}

\section{Numerical experiments}
 \label{sec:ne}


In this section we present simple numerical experiments on $\ell_1$--regularized least square problems
\begin{align}
\label{eq:lasso}
\min_{x\in \R^d}\,\left\{\dfrac{1}{2}\norm{Qx-b}_2^2 + \lambda \norm{x}_1\right\},
\end{align}
where $Q\in \R^{m\times d}$, $b\in \R^m$ and $\lambda\geq 0$.  
Let $\mathcal{R}=\{R_1,\dots, R_r\}$ be an arbitrary partition
\footnote{$R_i\neq \emptyset\; (i=1,\dots, r)$, $R_i\cap R_j=\emptyset$ for $i\neq j$ and $\cup_{i=1}^r\,R_i = \{1,\dots, m\}$.} 
of $\{1,\dots, m\}$
and, for $i=1,\dots, r$, let $Q_i\in \R^{|R_i|\times d}$ be the submatrix of $Q$ with rows corresponding to indices
in $R_i$ and similarly let $b_i\in \R^{|R_i|}$ be the corresponding subvector of $b$.
Then, problem \eqref{eq:lasso} is equivalent to the minimization problem
\begin{align*}
 %
 \min_{x\in \R^d}\,\left\{\sum_{i=1}^r\,\dfrac{1}{2}\norm{Q_i x-b_i}_{2}^2 + \lambda \norm{x}_1\right\},
\end{align*}
which, in turn, is clearly equivalent to the monotone inclusion problem
\begin{align}
 \label{eq:lasso03}
   0\in \sum_{i=1}^r\,Q_i^T(Q_i x - b_i) + \partial (\lambda \norm{x}_1).
\end{align} 
On the other hand, \eqref{eq:lasso03} is a special instance of the monotone inclusion problem \eqref{eq:Pmip1} with $z\leftarrow x$, $n=r+1$, $G_i=I$ ($i=1,\dots, n$), 
\begin{align*}
 T_i(x)=Q_i^T(Q_i x -b_i)\;( i=1,\dots, n-1)\;\;\mbox{and}\;\;  T_n(x)=\partial (\lambda \norm{x}_1).
\end{align*}

In this section, we shall apply Algorithm \ref{alg:iPSM} for solving \eqref{eq:lasso03} (and, in particular, \eqref{eq:lasso}) with the following choice of
parameters (see Steps 0, 1, 2 and 4 of Algorithm \ref{alg:iPSM}):
\begin{center}
$\alpha_k\equiv \alpha=0.1$,\; $\sigma=0.99$,\; $\gamma=1$,\; $\rho_i^k\equiv 1$\;\;\mbox{and}\;\;
$\beta_k\equiv \underline{\beta}=\overline{\beta} = 1.5519$.
\end{center}
The value $1.5519$ is computed from \eqref{eq:beta_alpha_4} with $\overline{\alpha}=0.17>\alpha$.  
Following \cite{Johnst.Eckst.2018},  we stop
the algorithm using the stopping criterion
\begin{align}
\label{eq:stop_crit}
%
%
 \dfrac{|F(z^k) - F^*|}{F^*}\leq 10^{-4},
\end{align}
where $F(\cdot)$ denotes the objective function in \eqref{eq:lasso} and $F^*$ is the optimal value of the 
problem  estimated by running Algorithm \ref{alg:iPSM} at least $10^4$ iterations and taking the minimum objective value.  

At each iteration $k$ of Algorithm \ref{alg:iPSM}, we used two different strategies for computing $(x_i^k,y_i^k)$ ($i=1,\dots, n$) satisfying \eqref{eq:proxT_i}: for $i=1,\dots, n-1$, in which case $ T_i(x)=Q_i^T(Q_i x -b_i)$, we implemented the
standard conjugate gradient (CG) algorithm for computing $x=x_i^k$ as an approximate solution of the linear system 
\begin{align*}
 (Q_i^T Q_i  + I) x =\widehat z^k + \widehat w_i^k + Q_i^T b_i
\end{align*}
until the satisfaction of the relative-error condition in \eqref{eq:proxT_i} with $y_i^k:=T_i(x_i^k)$ by the residual $e_i^k:=T_i(x_i^k)+x_i^k - (\widehat z^k + \widehat w_i^k)$. On the other hand, for $i=n$, in which case $T_n(x)=\partial (\lambda \norm{x}_1)$, we
set $x_i^k = \mbox{prox}_{\lambda \|\cdot\|_1}(\widehat z^k + \widehat w_i^k)$ and
$y_i^k = (\widehat z^k + \widehat w_i^k) - x_i^k$ (in this case, $e_i^k=0$).

\mgap
\mgap

\noindent
{\bf Data sets}.
We implemented Algorithm \ref{alg:iPSM} using the following data sets:
\begin{itemize}
\item Four randomly generated instances of \eqref{eq:lasso}: RandomA, RandomB, RandomC and RandomD. We used the Matlab command ``randn'' to generate $Q\in \R^{m\times d}$, and $b\in \R^m$ with 
$b_j \in \{0,1\}\; (j=1,\dots, m)$ where $b=(b_1,\dots, b_j, \dots, b_m)$ (see Table \ref{tab:random}).
\begin{table}
\caption{Dimensions of $Q\in \R^{m\times d}$ and $b\in \R^m$, size $r$ of the partition $\mathcal{R}$ of $\{1,\dots, m\}$ and number of rows of each submatrix of $Q$ on 
four randomly generated instances of \eqref{eq:lasso}}
\begin{center}
\begin{tabular}{cc|c|c|c|l}
\cline{2-5}
&  \multicolumn{1}{ |c| }{\multirow{2}{*}{$m$}} & \multicolumn{1}{ |c| }{\multirow{2}{*}{$d$}} & \multicolumn{1}{ |c| }{\multirow{2}{*}{$r$}} & \multicolumn{1}{ |c| }{\multirow{2}{*}{$ \mid R_i \mid $}} \\ 
 & \multicolumn{1}{ |c|  }{} & \multicolumn{1}{ |c|  }{} & \multicolumn{1}{ |c | }{} &    \multicolumn{1}{ |c | }{}     \\\cline{1-5}
\multicolumn{1}{ |c | }{\multirow{2}{*}{RandomA} }  & \multicolumn{1}{ |c| }{\multirow{2}{*}{1000}} & \multicolumn{1}{ |c| }{\multirow{2}{*}{1000}} & \multicolumn{1}{ |c| }{\multirow{2}{*}{10}} & \multicolumn{1}{ |c| }
{\multirow{2}{*}{\hspace{-0.2cm}$100$ $\; (i=1, \dots , 10)$}} &     \\
\multicolumn{1}{ |c|  }{}                        &
\multicolumn{1}{ |c| }{}  & \multicolumn{1}{ |c| }{} &  \multicolumn{1}{ |c| }{} & \multicolumn{1}{ |c| }{} &     \\ \cline{1-5}
\multicolumn{1}{ |c|  }{\multirow{2}{*}{RandomB} }  & \multicolumn{1}{ |c| }{\multirow{2}{*}{5000}} & \multicolumn{1}{ |c| }{\multirow{2}{*}{100}} & \multicolumn{1}{ |c| }{\multirow{2}{*}{20}} & \multicolumn{1}{ |c| }
{\multirow{2}{*}{\hspace{-0.2cm}$250$ $\; (i=1, \dots , 20)$}} &     \\ 
\multicolumn{1}{ |c | }{}                        &
\multicolumn{1}{ |c| }{} & \multicolumn{1}{ |c| }{} & \multicolumn{1}{ |c| }{} & \multicolumn{1}{ |c| }{} &     \\ \cline{1-5}
\multicolumn{1}{ |c | }{\multirow{2}{*}{RandomC} }  & \multicolumn{1}{ |c| }{\multirow{2}{*}{50000}} & \multicolumn{1}{ |c| }{\multirow{2}{*}{100}} & \multicolumn{1}{ |c| }{\multirow{2}{*}{250}} & \multicolumn{1}{ |c| }{\multirow{2}{*}{$200$ $ \; (i=1, \dots , 250)$}} &     \\ 
\multicolumn{1}{ |c | }{}                        &
\multicolumn{1}{ |c| }{} & \multicolumn{1}{ |c| }{} & \multicolumn{1}{ |c| }{} & \multicolumn{1}{ |c| }{} &     \\ \cline{1-5}
\multicolumn{1}{ |c | }{\multirow{2}{*}{RandomD} }  & \multicolumn{1}{ |c| }{\multirow{2}{*}{100000}} & \multicolumn{1}{ |c| }{\multirow{2}{*}{100}} & \multicolumn{1}{ |c| }{\multirow{2}{*}{325}} & \multicolumn{1}{ |c| }{$307$ $ \; (i=1, \dots , 324)$} &     \\ \cline{5-5}
\multicolumn{1}{ |c|  }{}                        &
\multicolumn{1}{ |c| }{} & \multicolumn{1}{ |c| }{} & \multicolumn{1}{ |c| }{} & \multicolumn{1}{ |c| }
{\hspace{-1cm}$532$ $ \; (i=325)$} &     \\ \cline{1-5}
\end{tabular}
\end{center}
\label{tab:random}
\end{table}
%
\item Five data sets (real examples) from the UCI Machine Learning Repository \cite{Dua:2019}: 
 the blog feedback dataset (BlogFeedback)
\footnote{https://archive.ics.uci.edu/ml/datasets/BlogFeedback.}
,  
communities and crime dataset (Crime)
\footnote{http://archive.ics.uci.edu/ml/datasets/communities+and+crime.}
, 
DrivFace dataset (DrivFace)
\footnote{https://archive.ics.uci.edu/ml/datasets/DrivFace.}
,
Single-Pixed Camera (Mug32)
\footnote{see \cite{alv.eck.ger.mel-preprint19}.}
and Breast Cancer Wisconsin (Diagnostic)
dataset (Wisconsin) \footnote{https://archive.ics.uci.edu/ml/datasets/Breast+Cancer+Wisconsin+(Diagnostic).}
(see Table \ref{tab:exampleUCI}).
\begin{table}
\caption{Dimensions of $Q\in \R^{m\times d}$ and $b\in \R^m$, size $r$ of the partition $\mathcal{R}$ of $\{1,\dots, m\}$ and number of rows of each submatrix of $Q$ on five real examples (from the UCI Machine Learning Repository \cite{Dua:2019}) of \eqref{eq:lasso}}
\begin{center}
\begin{tabular}{cc|c|c|c|l}
\cline{2-5}
&  \multicolumn{1}{ |c| }{\multirow{2}{*}{$m$}} & \multicolumn{1}{ |c| }{\multirow{2}{*}{$d$}} & \multicolumn{1}{ |c| }{\multirow{2}{*}{$r$}} & \multicolumn{1}{ |c| }{\multirow{2}{*}{$\mid R_i\mid$}} \\ 
 & \multicolumn{1}{ |c | }{} & \multicolumn{1}{ |c|  }{} & \multicolumn{1}{ |c|  }{} &    \multicolumn{1}{ |c | }{}     \\\cline{1-5}
\multicolumn{1}{ |c | }{\multirow{2}{*}{BlogFeedback} }  & \multicolumn{1}{ |c| }{\multirow{2}{*}{60021}} & \multicolumn{1}{ |c| }{\multirow{2}{*}{280}} & \multicolumn{1}{ |c| }{\multirow{2}{*}{175}} & \multicolumn{1}{ |c| }{$343$ $ \; (i=1, \dots,174)$} &     \\ \cline{5-5}
\multicolumn{1}{ |c|  }{}                        &
\multicolumn{1}{ |c| }{}  & \multicolumn{1}{ |c| }{} &  \multicolumn{1}{ |c| }{} & \multicolumn{1}{ |c| }
{\hspace{-1.08cm}$339$ $ \; (i=175)$} &     \\ \cline{1-5}
\multicolumn{1}{ |c|  }{\multirow{2}{*}{Crime} }  & \multicolumn{1}{ |c| }{\multirow{2}{*}{1994}} & \multicolumn{1}{ |c| }{\multirow{2}{*}{121}} & \multicolumn{1}{ |c| }{\multirow{2}{*}{10}} & \multicolumn{1}{ |c| }
{\hspace{-0.4cm}$200$ $ \; (i=1, \dots, 9)$} &     \\ \cline{5-5}
\multicolumn{1}{ |c | }{}                        &
\multicolumn{1}{ |c| }{} & \multicolumn{1}{ |c| }{} & \multicolumn{1}{ |c| }{} & \multicolumn{1}{ |c| }
{\hspace{-1.3cm}$194$ $ \; (i=10)$} &     \\ \cline{1-5}
\multicolumn{1}{ |c |}{\multirow{2}{*}{DrivFace} }  & \multicolumn{1}{ |c| }{\multirow{2}{*}{606}} & \multicolumn{1}{ |c| }{\multirow{2}{*}{6400}} & \multicolumn{1}{ |c| }{\multirow{2}{*}{6}} & \multicolumn{1}{ |c|}{\multirow{2}{*}
{\hspace{-0.406cm}$101$ $ \; (i=1,\dots,6)$}} &     \\ 
\multicolumn{1}{ |c|  }{}                        &
\multicolumn{1}{ |c| }{} & \multicolumn{1}{ |c| }{} & \multicolumn{1}{ |c| }{} & \multicolumn{1}{ |c| }{} &     \\ \cline{1-5}
\multicolumn{1}{ |c|  }{\multirow{2}{*}{Mug32} }  & \multicolumn{1}{ |c| }{\multirow{2}{*}{410}} & \multicolumn{1}{ |c| }{\multirow{2}{*}{1024}} & \multicolumn{1}{ |c| }{\multirow{2}{*}{4}} & \multicolumn{1}{ |c| }
{\hspace{-0.7cm}$100$ $ \; (i=1,2,3)$} &     \\ \cline{5-5}
\multicolumn{1}{ |c | }{}                        &
\multicolumn{1}{ |c| }{} & \multicolumn{1}{ |c| }{} & \multicolumn{1}{ |c| }{} & \multicolumn{1}{ |c| }
{\hspace{-1.408cm}$110$ $ \; (i=4)$} &     \\ \cline{1-5}
\multicolumn{1}{ |c|  }{\multirow{2}{*}{Wisconsin} }  & \multicolumn{1}{ |c| }{\multirow{2}{*}{198}} & \multicolumn{1}{ |c| }{\multirow{2}{*}{30}} & \multicolumn{1}{ |c| }{\multirow{2}{*}{3}} & \multicolumn{1}{ |c| }{\multirow{2}{*}
{\hspace{-0.8cm}$66$ $\;(i=1,2,3)$}} &     \\ 
\multicolumn{1}{ |c | }{}                        &
\multicolumn{1}{ |c| }{} & \multicolumn{1}{ |c| }{} & \multicolumn{1}{ |c| }{} & \multicolumn{1}{ |c| }{} &     \\ \cline{1-5}
\end{tabular}
\end{center}
\label{tab:exampleUCI}
\end{table} 
\end{itemize}
We also used $\lambda = 0.1\norm{Q^T b}_\infty$ (see \cite{boy.par.chu-dis.ftml11}) in
\eqref{eq:lasso}. Table \ref{lasso_outer} shows the number of outer iterations, and Table \ref{lasso_runtime}
shows runtimes in seconds.
Figures \ref{figura:graf} and \ref{figura:graf02} show the same results graphically (see the stopping criterion \eqref{eq:stop_crit}).

\begin{table}
\caption{Outer iterations for LASSO problems} 
\vspace{0.3cm}
\centering
 \begin{tabular}{llrrcrrcrrcc}\hline \hline \\
   Problem & & & & PS & & & PS\_in\_rel & & & $ \dfrac{iteration 2}{ iteration 1} $ &\\ 
     & & & & \tiny $iteration 1$ & & & \tiny $iteration 2$ & & & & \\ \hline \\
     BlogFeedback & & & & 2968 & & & \textbf{2342}  & & & 0.7891  & \\
     Crime        & & & & \textbf{211} & & & 216 & & & 1.0237  & \\
     DrivFace     & & & & 2008 & & & \textbf{585} & & & 0.2913  & \\
     Mug32        & & & & 203 & & & \textbf{192} & & &  0.9458 & \\
     Wisconsin    & & & & 211 & & & \textbf{210} & & & 0.9952  & \\
     RandomA     & & & & 219 & & & \textbf{185} & & & 0.8447  & \\
     RandomB     & & & & 23 & & & \textbf{21} & & &  0.9131 & \\
     RandomC     & & & & 408 & & & \textbf{151} & & & 0.3701  & \\
     RandomD   & & & & 507 & & & \textbf{278} & & &  0.5483 & \\ \hline \\
     Geometric mean & & & & 337.04 & & & \textbf{231.98} & & &  0.6883 & \\ \hline \hline \\
 \end{tabular}
\label{lasso_outer}
\end{table}
\begin{table}
\caption{LASSO runtimes in seconds} 
\vspace{0.3cm}
\centering
 \begin{tabular}{llrrcrrcrrcc}\hline \hline \\
   Problem & & & & PS & & & PS\_in\_relerr & & & $ \dfrac{time 2}{ time 1} $ &\\ 
     & & & & \tiny $time 1$ & & & \tiny $time 2$ & & & & \\ \hline \\
     BlogFeedback & & & & 207.18 & & & \textbf{130.44} & & & 0.6296  & \\
     Crime        & & & & 0.85 & & & \textbf{0.78} & & & 0.9176  & \\
     DrivFace     & & & & 133.19 & & & \textbf{37.11} & & &  0.2786 & \\
     Mug32        & & & & 1.36 & & & \textbf{1.18} & & &  0.8676 & \\
     Wisconsin    & & & & 0.15 & & & \textbf{0.11} & & &  0.7333 & \\
     randomA     & & & & 2.45 & & & \textbf{1.69} & & &  0.6898 & \\
     randomB     & & & & 0.25 & & & \textbf{0.13} & & & 0.52  & \\
     randomC     & & & & 10.53 & & & \textbf{4.08} & & &  0.3875 & \\
     randomD  & & & & 20.09 & & & \textbf{11.31} & & &  0.5629 & \\ \hline \\
     Geometric mean & & & & 3.79 & & & \textbf{2.57} & & & 0.6793  & \\ \hline \hline \\
 \end{tabular}
\label{lasso_runtime}
\end{table}
\begin{figure}
    \centering
    \caption{Comparison of performance in LASSO problems}
    \label{figura:graf}
    \vspace{0.2cm}
    \begin{minipage}{\linewidth}
    \centering
    \subfloat{
    \includegraphics[scale=0.32]{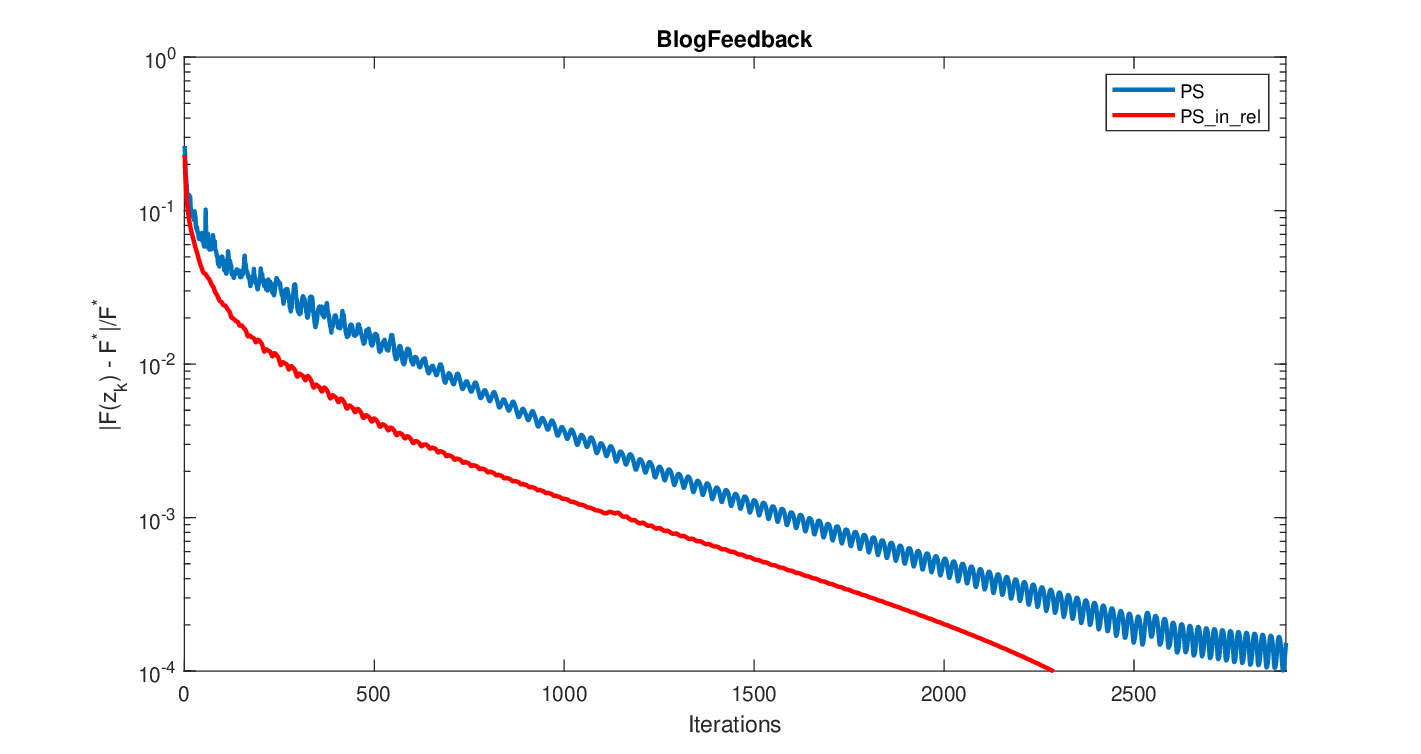} } 
    ~
    \subfloat{
    \includegraphics[scale=0.32]{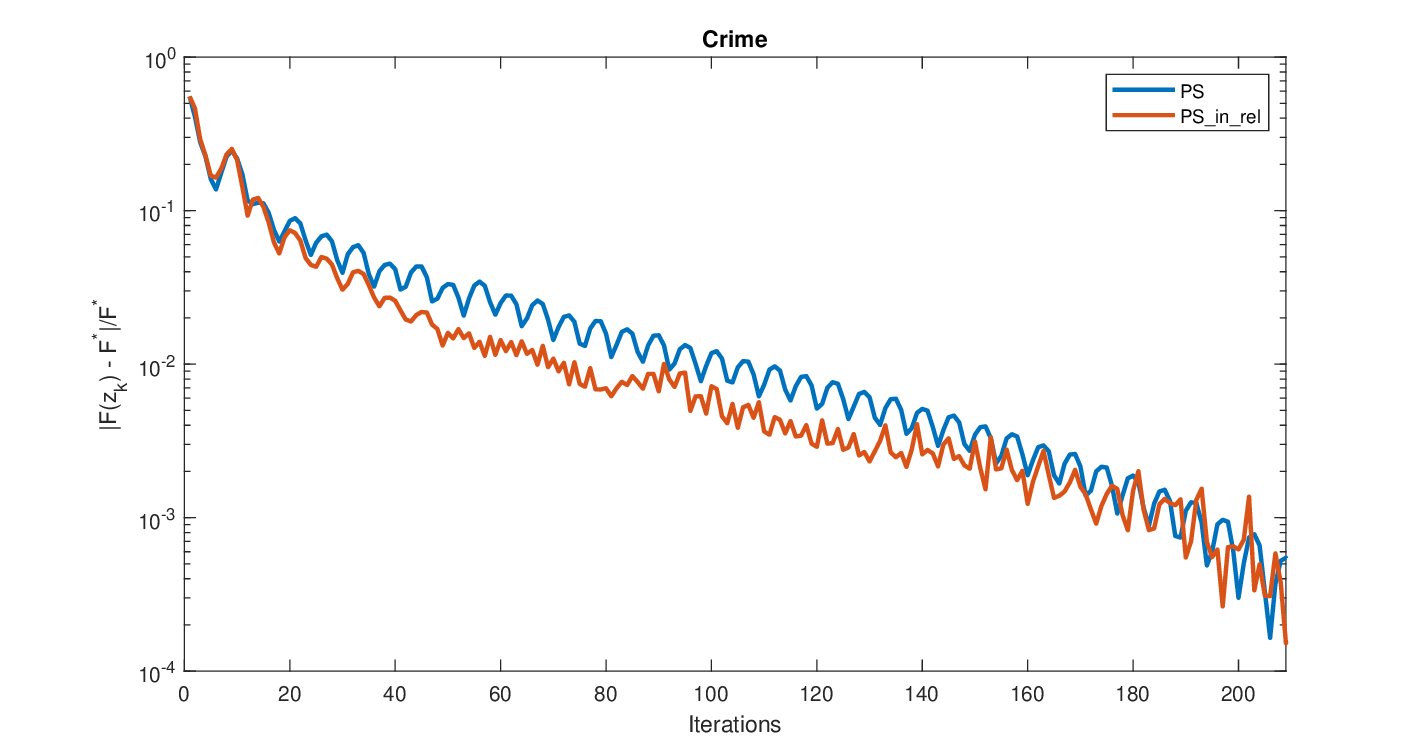}}
    \end{minipage}\par\medskip
    \begin{minipage}{\linewidth}
    \centering
    \subfloat{
    \includegraphics[scale=0.32]{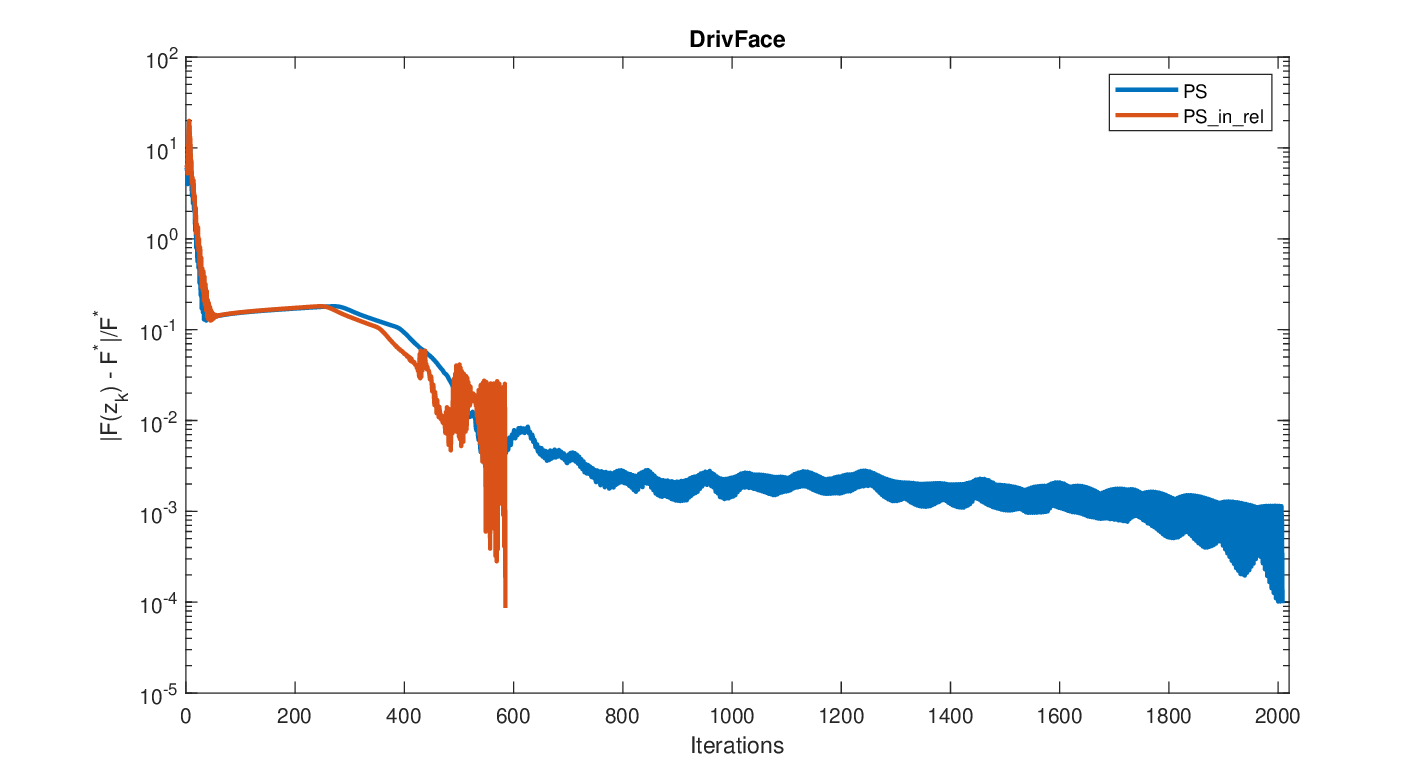}} 
    ~
    \subfloat{
    \includegraphics[scale=0.32]{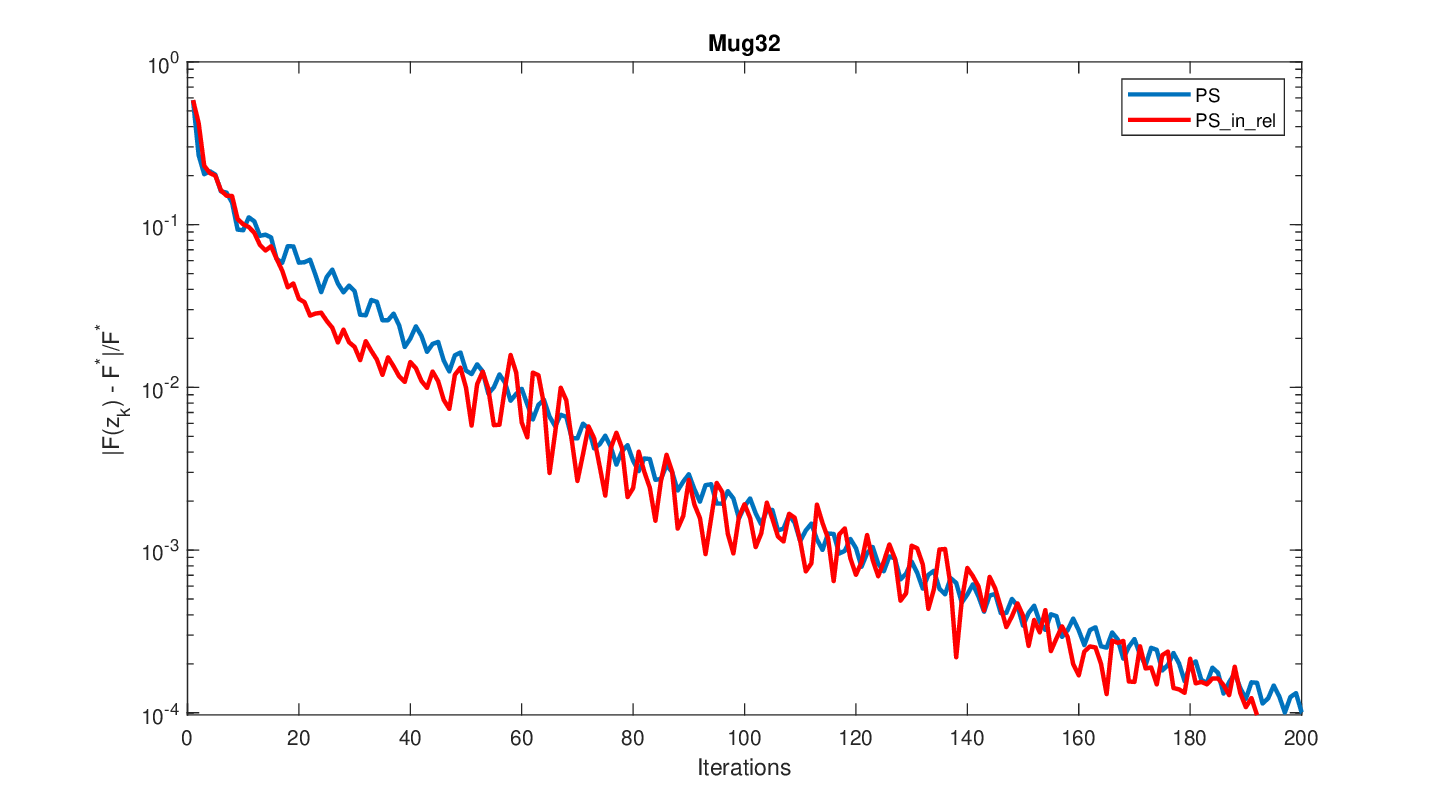}}
    \end{minipage}\par\medskip
    \begin{minipage}{\linewidth}
    \centering
    \subfloat{
    \includegraphics[scale=0.32]{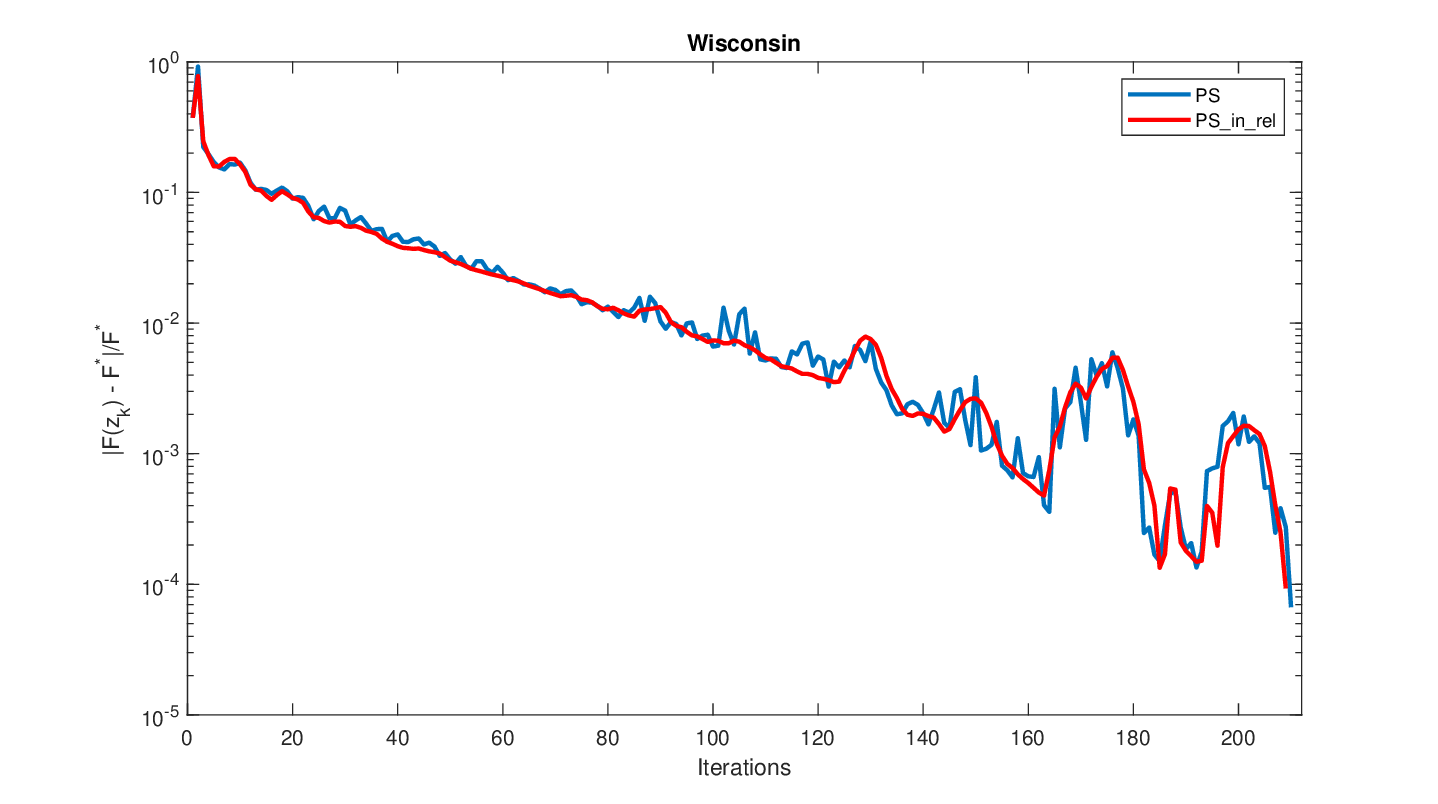}} 
    \end{minipage}
    %
    %
    \end{figure}

\begin{figure}
    \centering
    \caption{Comparison of performance in LASSO problems}
    \label{figura:graf02}
    \vspace{0.2cm}
\begin{minipage}{\linewidth}
    \centering
    \subfloat{
    \includegraphics[scale=0.32]{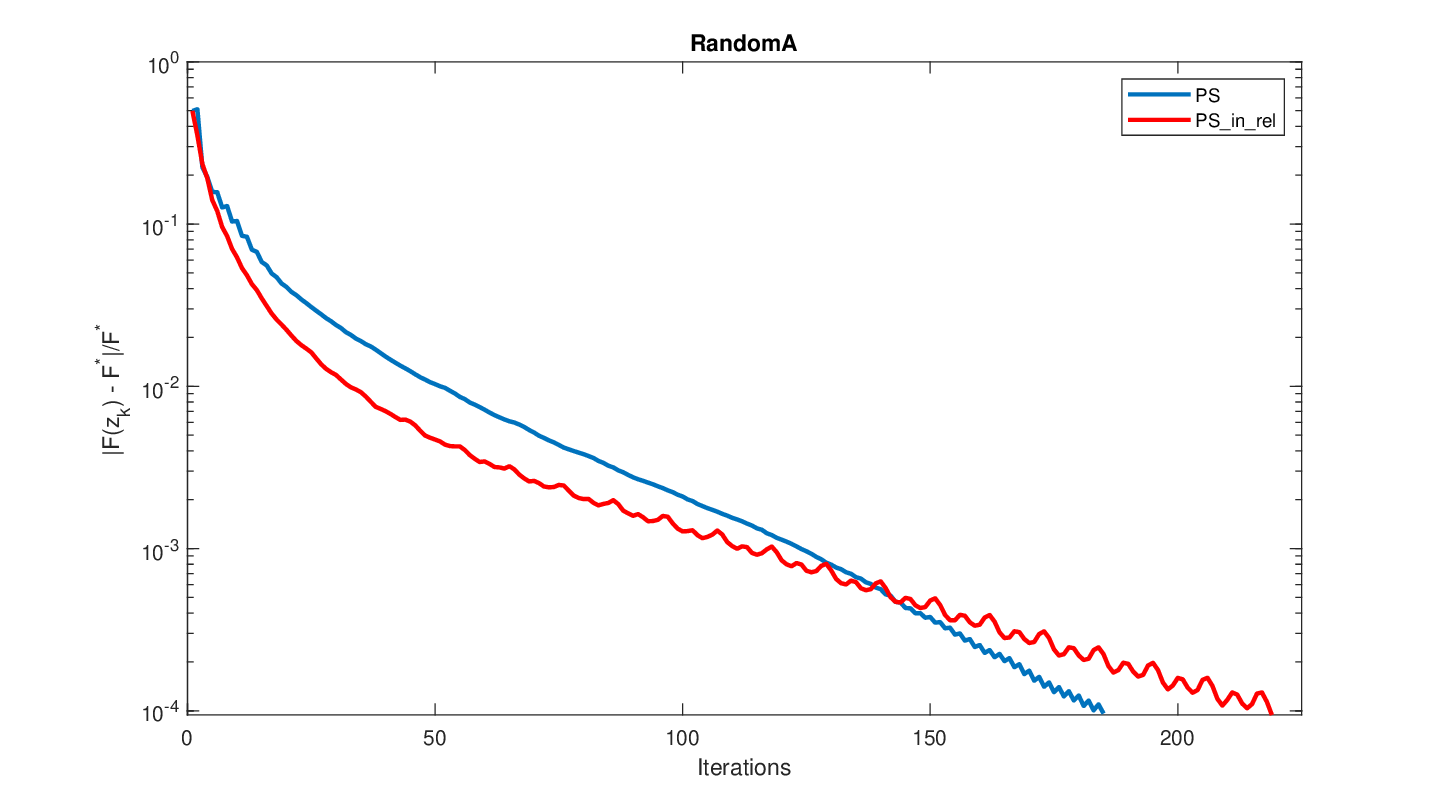}} 
    ~
    \subfloat{
    \includegraphics[scale=0.32]{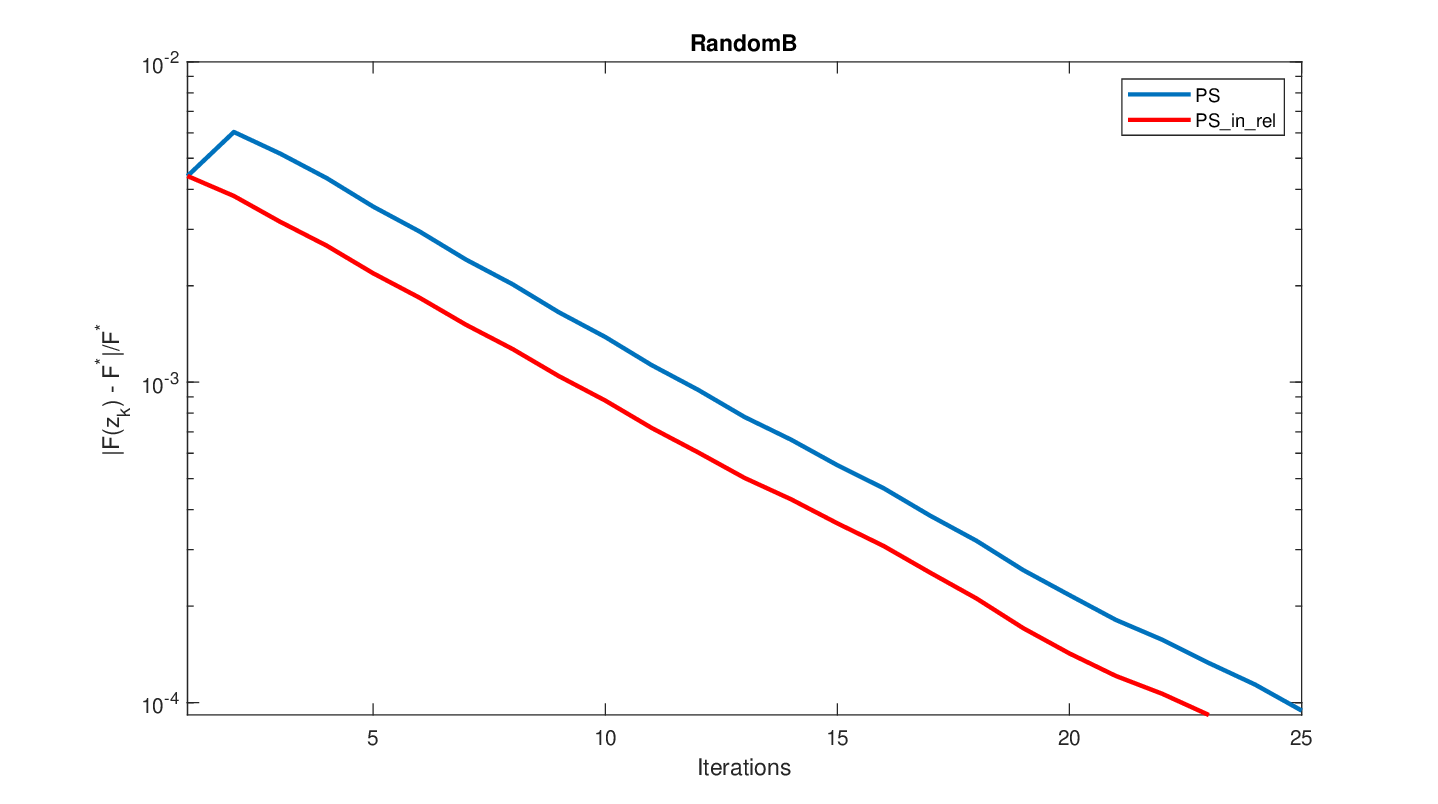}}
    \end{minipage}\par\medskip
    \begin{minipage}{ \linewidth}
    \centering
   \subfloat{
    \includegraphics[scale=0.32]{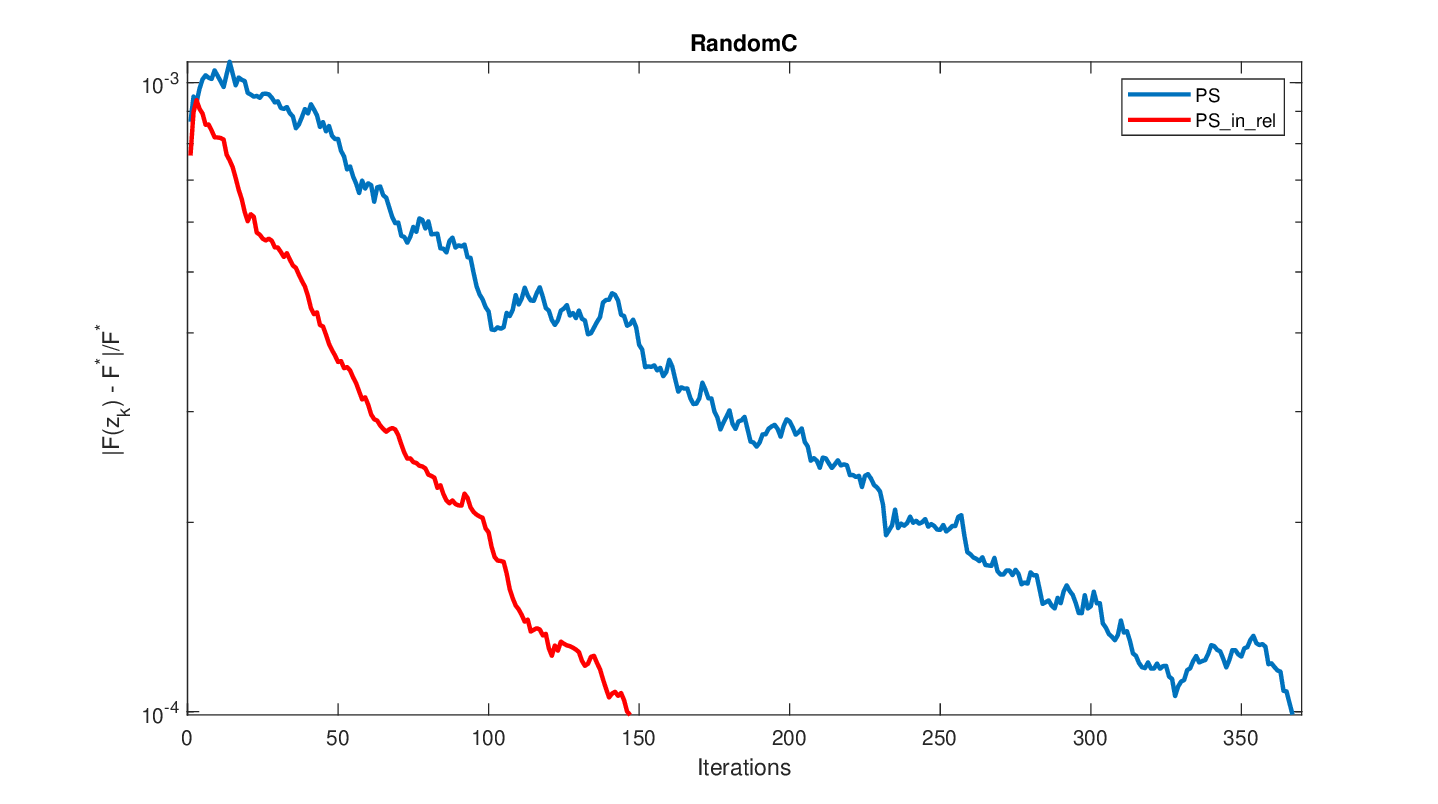}}
    ~
    \subfloat{
    \includegraphics[scale=0.32]{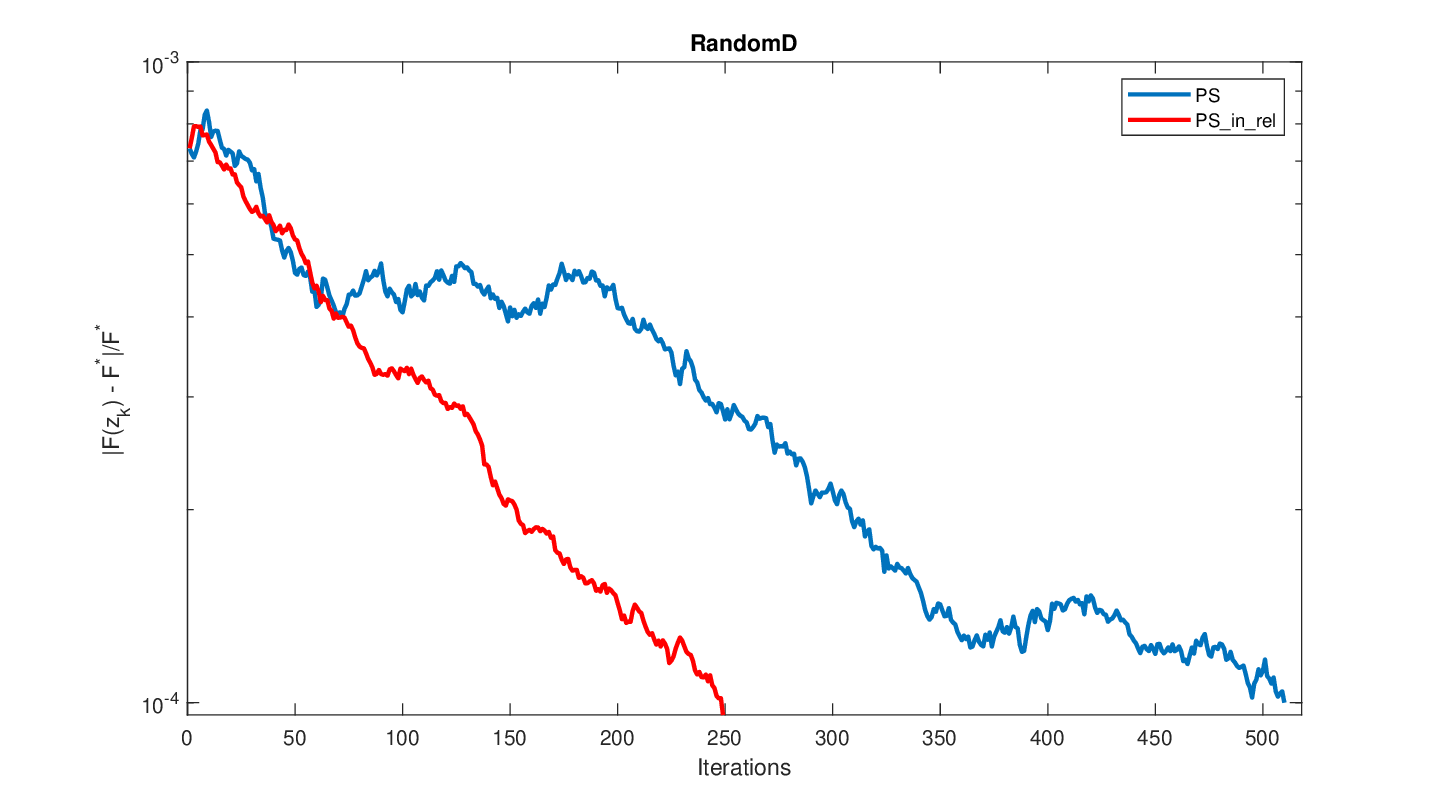}}
    \end{minipage}
    \end{figure}

\appendix
\section{Auxiliary results}

The following lemma was essentially proved by Alvarez and Attouch in  \cite[Theorem 2.1]{alv.att-iner.svva01} (see also
\cite[Lemma A.4]{alv.mar-ine.svva19}).
\begin{lemma}
 \label{lm:alv.att}
Let the sequences $\{h_k\}$, $\{s_k\}$, $\{\alpha_k\}$ and $\{\delta_k\}$ in $[0,+\infty)$
and $\alpha\in \R$ be such that
$h_0=h_{-1}$, $0\leq \alpha_k\leq \alpha<1$ and
\begin{align}
  \label{eq:alv.att02}
h_{k+1}-h_k+s_{k+1}\leq \alpha_k(h_k-h_{k-1})+\delta_k\qquad \forall k\geq 0.
\end{align}
The following hold:
\begin{enumerate}
  \item [\emph{(a)}] For all $k\geq 1$,
 \begin{align}
       \label{eq:alv.att01}
       h_k+\sum_{j=1}^k\,s_j\leq
      h_0+\dfrac{1}{1-\alpha} \sum_{j=0}^{k-1}\,\delta_j.
    \end{align}
  \item [\emph{(b)}] If $\sum^{\infty}_{k=0}\delta_k <+\infty$, then $\lim_{k\to \infty}\,h_{k}$ exist, i.e., the sequence $\{h_k\}$ converges to some element in $[0,+\infty)$.
\end{enumerate}
\end{lemma}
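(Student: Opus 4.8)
The plan is to control the positive parts of the increments $\theta_k:=h_k-h_{k-1}$. Writing $[t]_+:=\max\{t,0\}$, I would first note that, since $s_{k+1}\geq 0$ and $\alpha_k\geq 0$, a case distinction on the sign of $\theta_k$ shows $\alpha_k(h_k-h_{k-1})\leq \alpha[\theta_k]_+$ (if $\theta_k\geq 0$ use $\alpha_k\leq\alpha$; if $\theta_k<0$ the left side is $\leq 0$). Hence \eqref{eq:alv.att02} gives $\theta_{k+1}\leq \alpha[\theta_k]_+ + \delta_k$ and, taking positive parts,
\[
  [\theta_{k+1}]_+\leq \alpha[\theta_k]_+ + \delta_k\qquad \forall k\geq 0 .
\]
Since $h_0=h_{-1}$ forces $[\theta_0]_+=0$, an immediate induction yields $[\theta_k]_+\leq \sum_{j=0}^{k-1}\alpha^{\,k-1-j}\delta_j$ for every $k\geq 1$. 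Summing over $k$ and interchanging the order of summation (bounding the resulting geometric series) gives, for every $N\geq 1$,
\[
  \sum_{k=1}^{N-1}[\theta_k]_+\ \leq\ \frac{\alpha}{1-\alpha}\sum_{j=0}^{N-2}\delta_j\ \leq\ \frac{\alpha}{1-\alpha}\sum_{j=0}^{N-1}\delta_j .
\]

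For part (a) I would telescope the hypothesis: summing \eqref{eq:alv.att02} from $k=0$ to $k=N-1$ and using $\alpha_k(h_k-h_{k-1})\leq \alpha[\theta_k]_+$ together with $[\theta_0]_+=0$ yields
\[
  h_N - h_0 + \sum_{j=1}^{N}s_j\ \leq\ \alpha\sum_{k=1}^{N-1}[\theta_k]_+ + \sum_{k=0}^{N-1}\delta_k .
\]
Plugging in the bound from the previous paragraph and using $\alpha^2/(1-\alpha)+1=(1-\alpha+\alpha^2)/(1-\alpha)\leq 1/(1-\alpha)$ (valid because $\alpha^2\leq\alpha$ for $\alpha\in[0,1)$) gives exactly \eqref{eq:alv.att01}.

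For part (b), if $\sum_{k\geq0}\delta_k<\infty$ then the last display of the first paragraph shows $\sum_{k\geq1}[\theta_k]_+<\infty$. Set $v_N:=h_N-\sum_{j=1}^{N}[\theta_j]_+$. Then $v_{N+1}-v_N=\theta_{N+1}-[\theta_{N+1}]_+\leq 0$, so $\{v_N\}$ is nonincreasing, and $v_N\geq -\sum_{j=1}^{\infty}[\theta_j]_+>-\infty$ since $h_N\geq 0$; hence $\{v_N\}$ converges. Since $\sum_{j=1}^{N}[\theta_j]_+$ also converges, so does $h_N=v_N+\sum_{j=1}^{N}[\theta_j]_+$.

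The argument is elementary throughout; the points that require care are the sign analysis guaranteeing $\alpha_k(h_k-h_{k-1})\leq\alpha[\theta_k]_+$ irrespective of the sign of the increment, the index bookkeeping in the double-sum interchange, and the recognition that $v_N=h_N-\sum_{j\leq N}[\theta_j]_+$ is the correct monotone quantity for part (b). I do not anticipate a genuine obstacle beyond this bookkeeping.
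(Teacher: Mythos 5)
Your argument is correct in structure and is precisely the standard Alvarez--Attouch argument; the paper itself gives no proof of this lemma, citing \cite{alv.att-iner.svva01} instead, so your route is the canonical one rather than a genuinely different one. There is, however, one slip: the intermediate display $\sum_{k=1}^{N-1}[\theta_k]_+\leq \frac{\alpha}{1-\alpha}\sum_{j=0}^{N-2}\delta_j$ is false as stated (take $\alpha=0$ and $\delta_0>0$: then $[\theta_1]_+$ may equal $\delta_0$ while your right-hand side is $0$). After interchanging the sums, each $\delta_j$ acquires the coefficient $1+\alpha+\cdots+\alpha^{N-2-j}\leq \frac{1}{1-\alpha}$ --- the geometric series starts at $\alpha^0$, not at $\alpha^1$ --- so the correct constant is $\frac{1}{1-\alpha}$. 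This does not damage the remainder of the proof: what part (a) actually needs is a bound on $\alpha\sum_{k=1}^{N-1}[\theta_k]_+$, which with the corrected constant is at most $\frac{\alpha}{1-\alpha}\sum_{j=0}^{N-1}\delta_j$, and then $\frac{\alpha}{1-\alpha}+1=\frac{1}{1-\alpha}$ yields \eqref{eq:alv.att01} exactly (your appeal to $\alpha^2\leq\alpha$ becomes unnecessary). Part (b) uses only the summability of $[\theta_k]_+$, which survives the correction, and your monotone quantity $v_N=h_N-\sum_{j\leq N}[\theta_j]_+$ is the right one; its convergence together with that of the partial sums gives the limit of $h_N$ in $[0,+\infty)$ as claimed.
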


\mgap

\begin{lemma}[Opial~\cite{opial-wea.bams67}]
 \label{lm:opial}
Let $\HH$ be a real Hilbert space, let $\emptyset \neq \mathcal{S}\subset \HH$ and let $\{p^k\}$ be a sequence in $\HH$
such that every weak cluster point of $\{p^k\}$ belongs to $\mathcal{S}$ and $\lim_{k\to
\infty}\,\norm{p^k-p}$ exists for every $p\in \mathcal{S}$. Then $\{p^k\}$
converges weakly to a point in $\mathcal{S}$.
\end{lemma}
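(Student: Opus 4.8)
The plan is to prove this classical fact (Opial's lemma) in two stages: first I would show that $\{p^k\}$ has \emph{at most one} weak cluster point, and then I would upgrade ``bounded with a unique weak cluster point'' to genuine weak convergence.

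For the first stage, I would fix any $p\in\mathcal{S}$ — possible since $\mathcal{S}\neq\emptyset$ — and observe that, because $\lim_{k\to\infty}\|p^k-p\|$ exists, the sequence $\{p^k\}$ is bounded; by reflexivity of $\HH$ it therefore has at least one weak cluster point, and by hypothesis every such point lies in $\mathcal{S}$. Suppose $\bar p$ and $\bar q$ are two weak cluster points (hence both in $\mathcal{S}$) and write, for every $k$,
\[
\|p^k-\bar p\|^2-\|p^k-\bar q\|^2 = -2\inner{p^k}{\bar p-\bar q}+\|\bar p\|^2-\|\bar q\|^2 .
\]
Since $\{\|p^k-\bar p\|\}$ and $\{\|p^k-\bar q\|\}$ both converge, the left-hand side converges, and hence $\{\inner{p^k}{\bar p-\bar q}\}$ converges to some $\ell\in\R$. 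Passing to the limit along a subsequence converging weakly to $\bar p$ gives $\ell=\inner{\bar p}{\bar p-\bar q}$, and along a subsequence converging weakly to $\bar q$ gives $\ell=\inner{\bar q}{\bar p-\bar q}$; subtracting yields $\|\bar p-\bar q\|^2=0$, i.e.\ $\bar p=\bar q$. Thus $\{p^k\}$ has exactly one weak cluster point $p^\infty$, and $p^\infty\in\mathcal{S}$.

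For the second stage, I would argue by contradiction: if $\{p^k\}$ did not converge weakly to $p^\infty$, there would be a weakly open neighbourhood $U$ of $p^\infty$ and a subsequence $\{p^{k_j}\}$ lying entirely outside $U$. Being bounded, $\{p^{k_j}\}$ would have a further subsequence converging weakly to some point, which would be a weak cluster point of $\{p^k\}$ and hence equal $p^\infty$ — contradicting that the $p^{k_j}$ avoid the neighbourhood $U$ of $p^\infty$. Therefore $p^k\rightharpoonup p^\infty\in\mathcal{S}$, as claimed.

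I do not expect a serious obstacle, since the statement is standard; the only points that need a little care are the use of reflexivity (weak sequential compactness of bounded sets in a Hilbert space) to produce weak cluster points and weakly convergent sub-subsequences, and the algebraic bookkeeping in the expansion above used to extract a convergent inner product. A fully equivalent alternative for the first stage is to invoke the Opial inequality $\liminf_k\|p^k-\bar p\|<\liminf_k\|p^k-\bar q\|$ whenever $\bar p$ is the weak limit of a subsequence and $\bar q\neq\bar p$, which already contradicts the existence of the full limits $\lim_k\|p^k-\bar p\|$ and $\lim_k\|p^k-\bar q\|$ unless $\bar p=\bar q$.
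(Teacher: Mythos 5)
Your proof is correct. Note that the paper does not actually prove this lemma: it is stated in the appendix as a classical result and attributed directly to Opial's 1967 paper, so there is no in-text argument to compare against. What you give is the standard textbook proof, and both stages are sound: the expansion $\norm{p^k-\bar p}^2-\norm{p^k-\bar q}^2=-2\inner{p^k}{\bar p-\bar q}+\norm{\bar p}^2-\norm{\bar q}^2$ correctly isolates a convergent inner-product sequence whose limit is evaluated two ways to force $\bar p=\bar q$, and the final subsequence extraction correctly upgrades boundedness plus a unique weak cluster point to weak convergence (for a Hilbert space one can phrase the contradiction concretely: if $p^k\not\rightharpoonup p^\infty$, there exist $v\in \HH$, $\epse>0$ and a subsequence with $\abs{\inner{p^{k_j}-p^\infty}{v}}\geq\epse$, and any weakly convergent sub-subsequence then has a limit violating this bound). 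Your remark that the Opial inequality gives an alternative route to uniqueness is also accurate; it is essentially the same computation packaged differently.
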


\mgap

\begin{lemma} \emph{(\cite[Lemma A.2]{alv.eck.ger.mel-preprint19})}
 \label{lmm:inverse}
 The inverse function of the scalar map
\begin{align*}
 (0,2)\ni \beta \mapsto \dfrac{2(2-\beta)}{4-\beta+\sqrt{16\beta-7\beta^2}}\in (0,1)
\end{align*}
is given by
\begin{align*}
(0,1)\ni \overline{\alpha} \mapsto \dfrac{2(\overline{\alpha}-1)^2}{2(\overline{\alpha}-1)^2+3\overline{\alpha}-1}\in (0,2).
\end{align*}
\end{lemma}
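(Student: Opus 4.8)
The plan is to verify directly that composing the two maps yields the identity, after a change of variables that renders the rational map $g(\overline\alpha):=\frac{2(\overline\alpha-1)^2}{2(\overline\alpha-1)^2+3\overline\alpha-1}$ transparent; write $f(\beta):=\frac{2(2-\beta)}{4-\beta+\sqrt{16\beta-7\beta^2}}$ for the first map. First I would record the elementary facts that make both maps well defined: for $\beta\in(0,2)$ one has $16\beta-7\beta^2=\beta(16-7\beta)>0$ (since $\beta<2<16/7$) and $4-\beta+\sqrt{16\beta-7\beta^2}>4-\beta>0$, so $f$ is well defined and positive on $(0,2)$; and, setting $u:=1-\overline\alpha$, one has $2(\overline\alpha-1)^2+3\overline\alpha-1=2u^2-3u+2=:D$, whose discriminant $9-16<0$ forces $D>0$, so $g$ is well defined, with $g(\overline\alpha)=\beta(u):=2u^2/D$.

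Next I would analyze $\beta(u)$ for $u\in(0,1)$ (equivalently $\overline\alpha\in(0,1)$). A one-line computation gives $\beta'(u)=\frac{2u(4-3u)}{D^2}>0$ on $(0,1)$, while $\beta(0)=0$ and $\beta(1)=2/(2-3+2)=2$; hence $g$ restricts to a continuous strictly increasing bijection of $(0,1)$ onto $(0,2)$. In particular $g$ is invertible, so to conclude $g=f^{-1}$ it suffices to establish the single identity $f(g(\overline\alpha))=\overline\alpha$ for all $\overline\alpha\in(0,1)$: once this holds, composing $f\circ g=\mathrm{id}_{(0,1)}$ on the right with $g^{-1}$ gives $f=g^{-1}$.

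The computational core is then the substitution $\beta=\beta(u)$ into $f$. The key identities I would establish, each a short reduction over the common denominator $D$, are
\[
2-\beta=\frac{2(u-1)(u-2)}{D},\qquad 4-\beta=\frac{2(3u^2-6u+4)}{D},\qquad 16\beta-7\beta^2=\beta(16-7\beta)=\frac{4u^2(3u-4)^2}{D^2},
\]
where the last one relies on the factorization $9u^2-24u+16=(3u-4)^2$. Since $u\in(0,1)$ gives $3u-4<0$, one has $\sqrt{16\beta-7\beta^2}=\frac{2u(4-3u)}{D}$, so the denominator of $f$ collapses to $4-\beta+\sqrt{16\beta-7\beta^2}=\frac{2(3u^2-6u+4)+2u(4-3u)}{D}=\frac{4(2-u)}{D}$, while its numerator is $2(2-\beta)=\frac{4(u-1)(u-2)}{D}$. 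Dividing and using $u-2=-(2-u)$ yields $f(\beta(u))=\frac{(u-1)(u-2)}{2-u}=1-u=\overline\alpha$, as required.

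The main obstacle I anticipate is purely bookkeeping in this last step: the reductions only become clean after the substitution $u=1-\overline\alpha$, and one must spot the perfect-square factorization $9u^2-24u+16=(3u-4)^2$ and keep the correct sign of the square root. Without that factorization the $\sqrt{\,\cdot\,}$ term does not simplify and the computation stalls; everything else is routine algebra together with the single monotonicity/continuity argument for $g$.
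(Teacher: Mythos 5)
Your proof is correct, and I verified the algebra: with $u=1-\overline\alpha$ the map $g$ becomes $\beta(u)=2u^2/(2u^2-3u+2)$, the derivative $\beta'(u)=2u(4-3u)/D^2$ is positive on $(0,1)$ with $\beta(0)=0$, $\beta(1)=2$, so $g$ is a bijection of $(0,1)$ onto $(0,2)$; the three reductions $2-\beta=2(u-1)(u-2)/D$, $4-\beta=2(3u^2-6u+4)/D$, $16\beta-7\beta^2=4u^2(3u-4)^2/D^2$ all check out, the sign choice $\sqrt{16\beta-7\beta^2}=2u(4-3u)/D$ is the right one since $3u-4<0$, and the quotient collapses to $1-u=\overline\alpha$, giving $f\circ g=\mathrm{id}_{(0,1)}$ and hence $f=g^{-1}$. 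The paper itself supplies no argument for this lemma --- it is quoted verbatim from \cite[Lemma A.2]{alv.eck.ger.mel-preprint19} --- so there is no internal proof to compare against; your self-contained direct verification (substitution, perfect-square factorization $9u^2-24u+16=(3u-4)^2$, and the bijectivity argument that upgrades the one-sided identity $f\circ g=\mathrm{id}$ to full invertibility) is a complete and valid replacement for the citation.
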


\mgap

\begin{lemma}\emph{(\cite[Lemma A.3]{alv.eck.ger.mel-preprint19})}
 \label{lm:quadratic}
 Let $\R\ni \nu\mapsto q(\nu):=a \nu^2-b\nu+c$ be a real function and assume that $b,c>0$ and $b^2-4ac>0$. Define
\begin{align}
  \label{eq:root_b}
 \overline{\alpha}:= \dfrac{2c}{b+\sqrt{b^2-4ac}}>0.
\end{align}
\begin{itemize}
 \item[\emph{(i)}] If $a=0$, then $q(\cdot)$ is a decreasing affine function
 and $\overline{\alpha}>0$ as in \eqref{eq:root_b} is its unique root \emph{(}see
 \emph{Figure \ref{fig05}(a)}\emph{)}.
\item[\emph{(ii)}] If $a>0$ \emph{(}resp. $a<0$\emph{)}, then $q(\cdot)$ is a
 convex \emph{(}resp. concave\emph{)} quadratic function and $\overline{\alpha}>0$ as in
 \eqref{eq:root_b} is its smallest \emph{(}resp. largest\emph{)} root
 \emph{(}see \emph{Figure \ref{fig05}(b)} and \emph{Figure \ref{fig05}(c)},
 resp.\emph{)}.
\end{itemize}
In both cases $\emph{(i)}$ and $\emph{(ii)}$,  $\overline{\alpha}>0$ as in
\eqref{eq:root_b} is a root of $q(\cdot)$, and $q(\cdot)$ is decreasing in the
interval $[0,\overline{\alpha}]$ \emph{(}see \emph{Figure \ref{fig05}}\emph{)}.
\end{lemma}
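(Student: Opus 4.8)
The plan is to treat the affine case $a=0$ and the genuinely quadratic case $a\neq 0$ separately, verifying in each the three assertions: that $\overline{\alpha}$ is a root of $q$, that it is the indicated root (unique, smallest, or largest), and that $q$ is decreasing on $[0,\overline{\alpha}]$.

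For $a=0$ we have $q(\nu)=-b\nu+c$ with $b>0$, so $q$ is strictly decreasing on all of $\R$ and its unique root is $c/b$; since $\sqrt{b^2-4ac}=\sqrt{b^2}=b$ in this case, formula \eqref{eq:root_b} gives $\overline{\alpha}=2c/(b+b)=c/b$, which settles item (i) and the final claim when $a=0$.

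For $a\neq 0$ the hypothesis $b^2-4ac>0$ guarantees that $q$ has two distinct real roots $(b\pm\sqrt{b^2-4ac})/(2a)$, and the key algebraic step is the rationalization
\begin{align*}
 \frac{b-\sqrt{b^2-4ac}}{2a}
 &=\frac{(b-\sqrt{b^2-4ac})(b+\sqrt{b^2-4ac})}{2a(b+\sqrt{b^2-4ac})}
 =\frac{4ac}{2a(b+\sqrt{b^2-4ac})}\\
 &=\frac{2c}{b+\sqrt{b^2-4ac}}=\overline{\alpha},
\end{align*}
which is legitimate because $b>0$ forces $b+\sqrt{b^2-4ac}>0$; this already shows $\overline{\alpha}$ is a root and that $\overline{\alpha}>0$ (as $b,c>0$). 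To identify which root it is I would split on the sign of $a$: if $a>0$ then $(b-\sqrt{b^2-4ac})/(2a)$ is visibly the smaller of the two roots, whereas if $a<0$ then $b^2-4ac>b^2$ gives $\sqrt{b^2-4ac}>b>0$, so $(b+\sqrt{b^2-4ac})/(2a)<0<\overline{\alpha}$ and $\overline{\alpha}$ is the larger root. This proves item (ii).

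It remains to check the monotonicity of $q$ on $[0,\overline{\alpha}]$ using $q'(\nu)=2a\nu-b$. If $a>0$, then $q$ is convex with vertex abscissa $b/(2a)$, and since $\overline{\alpha}$ is the smaller root it lies at or below $b/(2a)$, so $[0,\overline{\alpha}]\subset(-\infty,b/(2a)]$, the interval on which a convex parabola decreases. If $a<0$, then for every $\nu\geq 0$ we have $2a\nu\leq 0$, hence $q'(\nu)=2a\nu-b\leq -b<0$, so $q$ is strictly decreasing on $[0,\infty)\supset[0,\overline{\alpha}]$. The whole argument is elementary; the only places that deserve a moment's attention are the rationalization identity above and the sign-of-$a$ case distinction, which is needed both to pin down which root $\overline{\alpha}$ is and to locate $[0,\overline{\alpha}]$ on the decreasing branch of the parabola.
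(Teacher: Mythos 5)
Your proof is correct and complete: the rationalization identity correctly identifies $\overline{\alpha}$ with the root $(b-\sqrt{b^2-4ac})/(2a)$ when $a\neq 0$, the sign analysis pins down which root it is in each case, and the derivative argument (together with $\overline{\alpha}\leq b/(2a)$ when $a>0$) establishes monotonicity on $[0,\overline{\alpha}]$. Note that the paper itself gives no proof of this lemma --- it is imported verbatim as Lemma A.3 of the cited preprint --- so there is nothing to compare against; your elementary verification stands on its own.
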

\begin{figure}[H]
\centering
\begin{minipage}{0.3\textwidth}
\centering
\begin{tikzpicture}[domain=-0.5:2, scale=0.6]
    \draw[->,line width = 0.50mm] (-0.4,0) -- (3,0) node[right] {$\nu$};
    \draw[->,line width = 0.50mm] (0,-1.2) -- (0,4.2) node[above] {$q(\nu)$};
    \draw[color=red,line width = 0.50mm] (-0.4,3.5) -- (2.2,-1);
        \node[below left,black] at (0,0) {0};
        \node[below left,black] at (0,3) {c};
        \node[below left,blue] at (1.7,0) {$\overline{\alpha}$};
\end{tikzpicture} \\
(a) $a=0$
\end{minipage}
~
\begin{minipage}{0.3\textwidth}
\centering
\begin{tikzpicture}[domain=-0.3:3.6, scale=0.6]
    \draw[->,line width = 0.50mm] (-0.5,0) -- (4,0) node[right] {$\nu$};
    \draw[->,line width = 0.50mm] (0,-1.2) -- (0,4.2) node[above] {$q(\nu)$};
    \draw[red, line width = 0.50mm]   plot[smooth,domain=-0.3:3.6] (\x, {\x^2 -3.45*(\x)+2});
    \node[below left,black] at (0,0) {0};
    \node[below left,black] at (0,2.3) {c};
    \node[below left,blue] at (0.9,0) {$\overline{\alpha}$};
    \end{tikzpicture} \\
(b) $a>0$
\end{minipage}
~
\begin{minipage}{0.3\textwidth}
\centering
  \begin{tikzpicture}[domain=-0.5:4, scale=0.6]
    \draw[->,line width = 0.50mm] (-2.5,0) -- (2,0) node[right] {$\nu$};
    \draw[->,line width = 0.50mm] (0,-1.2) -- (0,4.2) node[above] {$q(\nu)$};
    \draw[color=red,line width = 0.50mm] (-2.2,-0.5) parabola[parabola height= 3.5cm] + (3.7,-0.3);
    \node[below left,black] at (0,0) {0};
    \node[above right,black] at (0,2.6) {c};
    \node[below left,blue] at (1.25,0) {$\overline{\alpha}$};
    \end{tikzpicture} \\
(c) $a<0$
\end{minipage}
  \caption{Possible cases for the real function $q(\cdot)$
           in Lemma \ref{lm:quadratic}.}
  \label{fig05}
\end{figure}
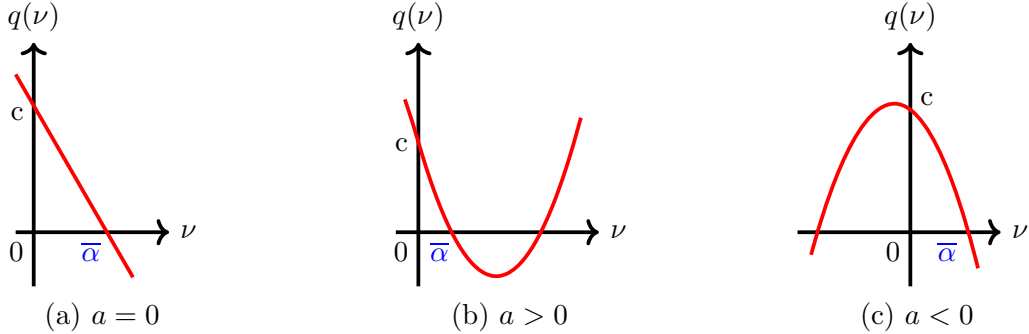

\mgap

The lemma below was proved (with a different notation) in \cite[Proposition 2.4]{alot.combet.shah.2014}.

\begin{lemma}
 \label{lm:comb}
  Let $\HH$ and $\mathcal{G}$ be real Hilbert spaces, let $A:\HH \rightrightarrows\HH$ and $B:\mathcal{G}\rightrightarrows \mathcal{G}$ be maximal monotone operators and let $G:\HH\rightarrow \mathcal{G}$ be a bounded linear operator.
  Let also $a^k\in A(r^k)$ and $b^k\in B(s^k)$ be such that $ r^k\rightharpoonup r^\infty$ and $b^k\rightharpoonup b^\infty$, for
  some $r^\infty\in \HH$ and $b^\infty\in \mathcal{G}$. If, $a^k + G^*b^k\to 0$ and $Gr^k-s^k\to 0$, then
  $b^\infty\in B(Gr^\infty)$ and $-G^* b^\infty\in A(r^\infty)$.
\end{lemma}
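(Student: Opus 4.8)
The plan is to recast the statement in a single product Hilbert space, where the pair $(A,B)$ together with the coupling $G$ assemble into one maximal monotone operator whose graph supplies the required closedness. Set $\mathbf{H}:=\HH\times\mathcal{G}$ with the natural product inner product, and define $\mathbf{A}:=A\times B^{-1}:\mathbf{H}\rightrightarrows\mathbf{H}$; this is maximal monotone, since $A$ is maximal monotone, $B^{-1}$ is maximal monotone as the inverse of a maximal monotone operator, and a product of maximal monotone operators is maximal monotone. Let $\mathbf{S}:\mathbf{H}\to\mathbf{H}$ be the bounded linear operator $\mathbf{S}(x,y):=(G^{*}y,-Gx)$; it is skew-adjoint, hence monotone, and being everywhere defined and continuous it is maximal monotone. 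Consequently $\mathbf{M}:=\mathbf{A}+\mathbf{S}$ is maximal monotone (sum of a maximal monotone operator and an everywhere-defined continuous monotone one), and a direct check shows $(x,y)\in\mathbf{M}^{-1}(0)$ iff $-G^{*}y\in A(x)$ and $y\in B(Gx)$ — that is, $\mathbf{M}^{-1}(0)$ is exactly the set named in the conclusion.

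First I would record that, since $a^{k}\in A(r^{k})$ and $s^{k}\in B^{-1}(b^{k})$ (the latter being equivalent to the hypothesis $b^{k}\in B(s^{k})$), the pair $(a^{k},s^{k})$ lies in $\mathbf{A}(r^{k},b^{k})$, so with $p^{k}:=(r^{k},b^{k})$ one has
\[
  q^{k}:=(a^{k},s^{k})+\mathbf{S}(r^{k},b^{k})=\bigl(a^{k}+G^{*}b^{k},\; s^{k}-Gr^{k}\bigr)\in\mathbf{M}(p^{k}).
\]
The hypotheses $a^{k}+G^{*}b^{k}\to0$ and $Gr^{k}-s^{k}\to0$ say precisely that $q^{k}\to0$ strongly in $\mathbf{H}$, while $r^{k}\rightharpoonup r^{\infty}$ and $b^{k}\rightharpoonup b^{\infty}$ give $p^{k}\rightharpoonup p^{\infty}:=(r^{\infty},b^{\infty})$ weakly. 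Then I would invoke the standard weak–strong sequential closedness of the graph of a maximal monotone operator: for every $(\tilde p,\tilde q)\in\Gr(\mathbf{M})$, monotonicity gives $\langle q^{k}-\tilde q,\,p^{k}-\tilde p\rangle\ge0$, and since $q^{k}\to0$ strongly and $p^{k}\rightharpoonup p^{\infty}$ weakly this pairing tends to $\langle 0-\tilde q,\,p^{\infty}-\tilde p\rangle$, which is therefore $\ge0$ for all $(\tilde p,\tilde q)\in\Gr(\mathbf{M})$; maximality forces $0\in\mathbf{M}(p^{\infty})$. Unwinding $\mathbf{A}$ and $\mathbf{S}$, this reads $-G^{*}b^{\infty}\in A(r^{\infty})$ and $Gr^{\infty}\in B^{-1}(b^{\infty})$, i.e. $b^{\infty}\in B(Gr^{\infty})$, which is the claim.

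The main obstacle is conceptual, not computational: one cannot take limits directly in the two separate monotonicity inequalities for $A$ and for $B$, because that produces a term such as $\langle G^{*}b^{k},r^{k}\rangle=\langle b^{k},Gr^{k}\rangle$ in which both factors converge only weakly, so the product need not converge to $\langle b^{\infty},Gr^{\infty}\rangle$. The point of the skew operator $\mathbf{S}$ — equivalently, of bundling $A$, $B$ and the coupling into the single operator $\mathbf{M}$ — is exactly that these troublesome cross terms cancel, which is the content of $\mathbf{S}$ being skew-adjoint ($\langle\mathbf{S}p,p\rangle\equiv0$); after the cancellation only strongly convergent remainders and terms affine in a single weakly convergent argument survive.

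If one prefers to avoid citing the maximality of the sum $\mathbf{A}+\mathbf{S}$, the same cancellation can be done by hand using only the maximality of $\mathbf{A}$: for arbitrary $(u,\xi)\in\Gr(A)$ and $(v,\zeta)\in\Gr(B^{-1})$, add the monotonicity inequality of $A$ at $(r^{k},a^{k})$ versus $(u,\xi)$ to that of $B^{-1}$ at $(b^{k},s^{k})$ versus $(v,\zeta)$, substitute $a^{k}=-G^{*}b^{k}+(a^{k}+G^{*}b^{k})$ and $s^{k}=Gr^{k}-(Gr^{k}-s^{k})$, observe that the $\langle b^{k},Gr^{k}\rangle$-type terms cancel, and pass to the limit (using $r^{k}\rightharpoonup r^{\infty}$, $b^{k}\rightharpoonup b^{\infty}$, boundedness, and the strong convergence of the two residuals) to obtain
\[
  \langle -G^{*}b^{\infty}-\xi,\, r^{\infty}-u\rangle+\langle Gr^{\infty}-\zeta,\, b^{\infty}-v\rangle\ge0
  \qquad\forall\,(u,\xi)\in\Gr(A),\ \forall\,(v,\zeta)\in\Gr(B^{-1}).
\]
By maximality of $\mathbf{A}=A\times B^{-1}$ this yields $\bigl((r^{\infty},b^{\infty}),(-G^{*}b^{\infty},Gr^{\infty})\bigr)\in\Gr(\mathbf{A})$, hence $-G^{*}b^{\infty}\in A(r^{\infty})$ and $b^{\infty}\in B(Gr^{\infty})$, as required.
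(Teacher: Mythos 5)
Your proof is correct. The paper does not prove this lemma itself but cites \cite[Proposition 2.4]{alot.combet.shah.2014}, and your argument --- assembling $A\times B^{-1}$ with the skew coupling operator $(x,y)\mapsto(G^*y,-Gx)$ into a single maximal monotone operator on the product space and invoking weak--strong sequential closedness of its graph --- is essentially that reference's argument, with your ``by hand'' variant being a self-contained unpacking of the same cancellation.
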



\begin{thebibliography}{10}

\bibitem{combettes2015BestAppr}
A.~Alotaibi, P.~L. Combettes, and N.~Shahzad.
\newblock Best approximation from the {K}uhn-{T}ucker set of composite monotone
  inclusions.
\newblock {\em Numer. Funct. Anal. Optim.}, 36(12):1513--1532, 2015.

\bibitem{alot.combet.shah.2014}
A.~Alotaibi, P.L. Combettes, and N.~Shahzad.
\newblock Solving coupled composite monotone inclusions by successive Fejér
  approximations of their Kuhn-Tucker set.
\newblock {\em SIAM J. Optim.}, 24(8):2076–2095, 2014.

\bibitem{alv.att-iner.svva01}
F.~Alvarez and H.~Attouch.
\newblock An inertial proximal method for maximal monotone operators via
  discretization of a nonlinear oscillator with damping.
\newblock {\em Set-Valued Anal.}, 9(1-2):3--11, 2001.
\newblock Wellposedness in optimization and related topics (Gargnano, 1999).

\bibitem{alv.eck.ger.mel-preprint19}
M.~M. Alves, J.~Eckstein, M.~Geremia, and J.G. Melo.
\newblock Relative-error inertial-relaxed inexact versions of
  {D}ouglas-{R}achford and {ADMM} splitting algorithms.
\newblock {\em Comput. Optim. Appl.}, 75(2):389--422, 2020.

\bibitem{alv.mar-ine.svva19}
M.~M. Alves and R.~T. Marcavillaca.
\newblock On inexact relative-error hybrid proximal extragradient,
  forward-backward and {T}seng's modified forward-backward methods with
  inertial effects.
\newblock {\em Set-Valued Var. Anal.}, 28(2):301--325, 2020.

\bibitem{att.cab-con.jde18}
H.~Attouch and A.~Cabot.
\newblock Convergence of damped inertial dynamics governed by regularized
  maximally monotone operators.
\newblock {\em J. Differential Equations}, 264(12):7138--7182, 2018.

\bibitem{att.cab-con.pre18}
H.~Attouch and A.~Cabot.
\newblock Convergence of a relaxed inertial proximal algorithm for maximally
  monotone operators.
\newblock {\em Math. Program.}, 184(1-2, Ser. A):243--287, 2020.

\bibitem{att.pey-con.mp19}
H.~Attouch and J.~Peypouquet.
\newblock Convergence of inertial dynamics and proximal algorithms governed by
  maximally monotone operators.
\newblock {\em Math. Program.}, 174(1-2, Ser. B):391--432, 2019.

\bibitem{bau.com-book}
H.~H. Bauschke and P.~L. Combettes.
\newblock {\em Convex analysis and monotone operator theory in {H}ilbert
  spaces}.
\newblock CMS Books in Mathematics/Ouvrages de Math\'ematiques de la SMC.
  Springer, New York, 2011.
\newblock With a foreword by H{\'e}dy Attouch.

\bibitem{bot.cse.hen-ine.amc15}
R.~I. Bo\c{t}, E.~R. Csetnek, and C.~Hendrich.
\newblock Inertial {D}ouglas-{R}achford splitting for monotone inclusion
  problems.
\newblock {\em Appl. Math. Comput.}, 256:472--487, 2015.

\bibitem{boy.par.chu-dis.ftml11}
S.~Boyd, N.~Parikh, E.~Chu, B.~Peleato, and J.~Eckstein.
\newblock Distributed optimization and statistical learning via the alternating
  direction method of multipliers.
\newblock {\em Found. Trends Mach. Learn.}, 3(1):1--122, 2011.

\bibitem{combettes2020warped}
M.~N. B\`ui and P.~L. Combettes.
\newblock Warped proximal iterations for monotone inclusions.
\newblock {\em J. Math. Anal. Appl.}, 491(1):124315, 21, 2020.

\bibitem{combettes2022saddle}
M.~N. B\`ui and P.~L. Combettes.
\newblock Multivariate monotone inclusions in saddle form.
\newblock {\em Math. Oper. Res.}, 47(2):1082--1109, 2022.

\bibitem{combettes2018asynchronous}
P.~L. Combettes and J.~Eckstein.
\newblock Asynchronous block-iterative primal-dual decomposition methods for
  monotone inclusions.
\newblock {\em Math. Program.}, 168(1-2, Ser. B):645--672, 2018.

\bibitem{com.gla-qua.sjo17}
P.~L. Combettes and L.~E. Glaudin.
\newblock Quasi-nonexpansive iterations on the affine hull of orbits: from
  {M}ann's mean value algorithm to inertial methods.
\newblock {\em SIAM J. Optim.}, 27(4):2356--2380, 2017.

\bibitem{combettes2016Solving}
P.~L. Combettes and Q.~V. Nguyen.
\newblock Solving composite monotone inclusions in reflexive {B}anach spaces by
  constructing best {B}regman approximations from their {K}uhn-{T}ucker set.
\newblock {\em J. Convex Anal.}, 23(2):481--510, 2016.

\bibitem{Dua:2019}
D.~Dua and C.~Graff.
\newblock {UCI} machine learning repository.
\newblock http://archive.ics.uci.edu/ml, 2017.

\bibitem{eck.sva-gen.sjco09}
J.~Eckstein and B.~F. Svaiter.
\newblock General projective splitting methods for sums of maximal monotone
  operators.
\newblock {\em SIAM J. Control Optim.}, 48(2):787--811, 2009.

\bibitem{eckstein2019rates}
P.~R. Johnstone and J.~Eckstein.
\newblock Convergence rates for projective splitting.
\newblock {\em SIAM J. Optim.}, 29(3):1931--1957, 2019.

\bibitem{Johnst.Eckst.2019}
P.~R. Johnstone and J.~Eckstein.
\newblock Single-forward-step projective splitting: exploiting cocoercivity.
\newblock {\em Comput. Optim. Appl.}, 78(1):125--166, 2021.

\bibitem{Johnst.Eckst.2018}
P.~R. Johnstone and J.~Eckstein.
\newblock Projective splitting with forward steps.
\newblock {\em Math. Program.}, 191(2, Ser. A):631--670, 2022.

\bibitem{jonhst.eckst.siam2019}
P.R. Johnstone and J.~Eckstein.
\newblock Convergence rates for projective splitting.
\newblock {\em SIAM J. Optim.}, 29(3):1931--1957, 2019.

\bibitem{opial-wea.bams67}
Z.~Opial.
\newblock Weak convergence of the sequence of successive approximations for
  nonexpansive mappings.
\newblock {\em Bull. Amer. Math. Soc.}, 73:591--597, 1967.

\end{thebibliography}

\def\cprime{$'$}

\end{document}